\newtheorem{thm}{Theorem}[section]
\newtheorem{lem}[thm]{Lemma}
\newtheorem{prop}[thm]{Proposition}
\newtheorem*{definition*}{Definition}
\newtheorem{conj}[thm]{Conjecture}
\newtheorem{question}[thm]{Question}
\theoremstyle{remark}
\newtheorem*{rmk}{Remark}
\theoremstyle{remark}
\DeclareMathOperator{\Gal}{Gal}
\DeclareMathOperator{\GL}{GL}
\newcommand{\Z}{\mathbb{Z}}
\newcommand{\Q}{\mathbb{Q}}
\newcommand{\C}{\mathbb{C}}
\newcommand{\fr}{\frac}
\newcommand{\CC}{\mathbb{C}}
\newcommand{\Mod}[1]{\ (\textup{mod}\ #1)}     
\newcommand{\QQ}{\mathbb{Q}}
\title{The $ABC$-Conjecture implies uniform bounds on dynamical Zsigmondy sets}
\author[Nicole R. Looper]{Nicole R. Looper}
\address{Department of Mathematics, Northwestern University; 2033 Sheridan Road, Evanston, IL 60208, USA}
\email{nlooper@math.northwestern.edu}
\begin{document} 
	
	\begin{abstract} \normalsize We prove that the $abc$-Conjecture implies upper bounds on Zsigmondy sets that are uniform over families of unicritical polynomials over number fields. As an application, we use the $abc$-Conjecture to prove that there exist uniform bounds on the index of the associated arboreal Galois representations.
		
	\end{abstract}
	
	\maketitle
	\renewcommand{\thefootnote}{}
	\footnote{\emph{2010 Mathematics Subject Classification}: Primary: 11R32, 11G50, 37P15. Secondary: 37P45.}
	\footnote{Research partially supported by an NSF Graduate Research Fellowship.}
	
	\section{Introduction}
	
	Let $K$ be a number field, with $\mathcal{O}_K$ its ring of integers. Let $f\in K[x]$, and let $\alpha\in K$. Denote the $n$-th iterate of $f$ by $f^n$. For $n\ge2$, we say a prime $\mathfrak{p}$ of $\mathcal{O}_K$ is a \textit{primitive prime divisor} of $f^n(\alpha)$ if $f^n(\alpha)\ne 0$, $v_\mathfrak{p}(f^n(\alpha))>0$, and $v_\mathfrak{p}(f^m(\alpha))\le 0$ for all $1\le m<n$ such that $f^m(\alpha)\ne0$. If $f^n(\alpha)$ does not have any primitive prime divisors, then we say that $n$ is in the \textit{Zsigmondy set} of the forward orbit $\{f^i(\alpha)\}_{i\ge 0}$. Zsigmondy sets have been studied extensively in \cite{GNT}, \cite{Hindes1}, \cite{IngramSilverman}, \cite{IngramSilverman2}, \cite{Krieger}, and \cite{Silverman3}. In \cite{GNT}, the authors show that if $f$ is not dynamically ramified, then the $abc$-Conjecture implies that either $\alpha$ satisfies $f^i(\alpha)=f^j(\alpha)$ for some $i\ne j$, or $f^n(\alpha)$ has a multiplicity one primitive prime divisor for all but finitely many $n$. In another direction, Krieger proves unconditionally in \cite{Krieger} that the cardinality of the Zsigmondy set associated to the critical orbit of $f_c(x)=x^d+c\in \QQ[x]$ is bounded above uniformly over all $c\in \QQ$ such that $f_c$ is not post-critically finite.
	
	In this article, we address the problem of finding bounds on the Zsigmondy sets of unicritical polynomials $f(x)=(x-\gamma)^d+c\in\mathcal{O}_K[x]$, for a given number field $K$ and degree $d\ge 2$. It is easy to show that there does not exist a uniform bound on the sizes of these Zsigmondy sets across all such maps.  For example, given any such $f$ and any $\alpha\in K$ with infinite forward orbit under $f$, if we let $M(z)=z-f^k(\alpha)$ for any $k$, then $MfM^{-1}$ has $k$ in its Zsigmondy set. To treat this, we introduce a quantity $\nu(f)$ that measures how the maximal height of the coefficients of $f$ compares to the height of $f$ in moduli space. In the quadratic case, we assume a standard Height Uniformity Conjecture. This conjecture is a consequence of Vojta's Conjecture \cite{Ih1}.  We remark that Vojta's Conjecture in fact implies the $abc$-Conjecture \cite{Vojta}.
	
	\begin{thm}{\label{thm:main}}
		If $d\ge 3$, assume the $abc$-Conjecture for $K$; if $d=2$, assume further the Height Uniformity Conjecture (see Conjecture \ref{conj:htunif}) and that $K=\QQ$ or $K$ is an imaginary quadratic field. Let $P_d=\{f(x)=(x-\gamma)^d+c\in K[x]\mid c-\gamma\in\mathcal{O}_K, c\ne 0\}$. For $f(x)\in P_d$, let \[\nu(f)=\fr{h(\gamma)}{\max\{1,h(c-\gamma)\}},\] where $h$ denotes the Weil logarithmic height. There exist positive constants $D_1, D_2$ depending only on $d$ and on $K$ such that for all $\alpha\in K$ having infinite forward orbit under $f$, there is a multiplicity 1 primitive prime divisor of $f^n(\alpha)$ for all $n>D_1\log^+(\nu(f))+D_2$. 
	\end{thm}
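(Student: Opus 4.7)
The plan is to apply the $abc$-Conjecture over $K$ to the iteration identity $(f^{n-1}(\alpha)-\gamma)^d + c - f^n(\alpha) = 0$, assume for contradiction that $f^n(\alpha)$ has no multiplicity-one primitive prime divisor, and derive a contradiction whenever $n > D_1 \log^+\nu(f) + D_2$. As a preliminary simplification I would conjugate $f$ by $M(x) = x+\gamma$ to pass to $g(x) = x^d + (c-\gamma) \in \mathcal{O}_K[x]$, normalizing the moduli-space height to $h(c-\gamma)$ while recording the translation $\gamma$ separately via the relation $f^n(\alpha) = g^n(\alpha-\gamma) + \gamma$; the quantity $\nu(f)$ emerges naturally from this separation of $h(\gamma)$ and $h(c-\gamma)$.

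Applying the $abc$-Conjecture (using $\mathrm{rad}(y^d) = \mathrm{rad}(y)$) yields
\[
d \cdot h(f^{n-1}(\alpha)-\gamma) \leq (1+\epsilon)\bigl[h_K(\mathrm{rad}(f^{n-1}(\alpha)-\gamma)) + h_K(\mathrm{rad}(c)) + h_K(\mathrm{rad}(f^n(\alpha)))\bigr] + O_\epsilon(1).
\]
The first two radicals I would bound trivially by $h(f^{n-1}(\alpha)-\gamma)$ and by $O(h(\gamma) + h(c-\gamma))$. The critical third radical I would decompose via $f^n(\alpha) = P_n \cdot N_n$, where $P_n$ is the primitive and $N_n$ the non-primitive part. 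Under the no-multiplicity-one hypothesis every prime divisor of $P_n$ has valuation $\geq 2$, so $h_K(\mathrm{rad}(P_n)) \leq \tfrac12 h(f^n(\alpha)) = \tfrac{d}{2} h(f^{n-1}(\alpha)-\gamma) + O(h(c))$. For $N_n$ the key observation is that if $\mathfrak{p} \mid f^n(\alpha)$ and $\mathfrak{p} \mid f^m(\alpha)$ for some $m < n$, then $f^n(\alpha) \equiv f^{n-m}(0) \pmod{\mathfrak{p}}$ and hence $\mathfrak{p} \mid f^{n-m}(0)$; this allows me to bound $h_K(\mathrm{rad}(N_n))$ in terms of the $0$-orbit and thus in terms of the canonical height $\hat h_f(0)$ together with uniform estimates on its size relative to $h(f)$.

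Combining these estimates, for $d \geq 3$ the coefficient of $h(f^{n-1}(\alpha)-\gamma)$ on the left, namely $d - (1+\epsilon)(1+d/2)$, is strictly positive for small~$\epsilon$, producing an inequality of the shape
\[
h(f^{n-1}(\alpha)-\gamma) \ll_\epsilon h(\gamma) + h(c-\gamma) + h_K(\mathrm{rad}(N_n)) + O_\epsilon(1).
\]
Invoking $h(f^{n-1}(\alpha)-\gamma) \sim d^{n-1}\hat h_f(\alpha)$ together with a uniform canonical-height lower bound $\hat h_f(\alpha) \gg h(c-\gamma)$ (Silverman's inequality for $d\geq 3$, the Height Uniformity Conjecture for $d = 2$), the inequality rearranges to $d^{n-1} h(c-\gamma) \ll h(\gamma) + h(c-\gamma) + O_\epsilon(1)$, i.e.\ $d^{n-1} \ll \nu(f) + 1$, which yields the claimed bound $n \leq D_1\log^+\nu(f) + D_2$. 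The principal obstacle is the uniform quantitative control of $h_K(\mathrm{rad}(N_n))$: the crude bound $\sum_{k=1}^{n-1}h(f^k(0))$ is too large, so one must extract finer arithmetic control on $\gcd$'s between distinct orbit iterates. The $d=2$ case is especially delicate because the leading coefficient $(d-2)/2$ vanishes and all slack must be supplied by the Height Uniformity Conjecture, which is used precisely to furnish the additional canonical-height margin needed to close the argument.
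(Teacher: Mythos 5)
Your $d\ge 3$ strategy is essentially the paper's (apply $abc$ to $(f^{n-1}(\alpha)-\gamma)^d+c=f^n(\alpha)$, count multiplicities, and use the fact that a prime dividing both $f^m(\alpha)$ and $f^n(\alpha)$ divides $f^{n-m}(0)$), but your sketch leaves unresolved exactly the two points on which the proof actually turns. First, the non-primitive radical: you flag it as ``the principal obstacle'' and ask for finer gcd control, but no finer control is needed. For each non-primitive prime one may use whichever of the indices $m$ and $n-m$ is at most $n/2$, so the non-primitive contribution is bounded by $\sum_{k\le\lfloor n/2\rfloor}\bigl(h(f^k(\alpha))+h(f^k(0))\bigr)=O\bigl(d^{n/2}(\max\{1,h(c-\gamma)\}+h(\alpha-\gamma)+h(\gamma))\bigr)$, which is negligible against the lower bound $h(f^n(\alpha))\ge\kappa d^n\max\{1,h(c-\gamma)\}-O(\cdot)$ once $n$ exceeds a threshold linear in $\log^+\nu(f)$; this is Proposition \ref{prop:imprimitive}. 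The lower bound used there (and needed to make every threshold in your argument depend only on $d$, $K$ and $\nu(f)$, not on $f$ and $\alpha$) is Ingram's uniform canonical-height bound for $x^d+c$ with $c\in\mathcal{O}_K$, not ``Silverman's inequality''; it is required for every $d\ge 2$ and is precisely why the theorem assumes $c-\gamma\in\mathcal{O}_K$. Your crude bound $\sum_{k=1}^{n-1}h(f^k(0))$ is indeed insufficient, so as written your proof of the main inequality is incomplete even for $d\ge 3$.

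Second, the $d=2$ case does not work as you describe. The Height Uniformity Conjecture is a statement about $K$-points on curves $y^2=F(x)$; it supplies no ``canonical-height margin,'' and no canonical-height estimate can fix the sign of your coefficient $d-(1+\epsilon)(1+d/2)$, because for $d=2$ the $abc$ inequality is perfectly consistent with $f^n(\alpha)$ being essentially a square, i.e.\ having no multiplicity-one primes at all. The paper's mechanism is different: write $f^n(\alpha)=u_nd_ny_n^2$ with $d_n$ essentially squarefree and $h(u_n)$ controlled (Lemma \ref{lem:decomposition}), note that $f^{n-3}(\alpha)$ is the $x$-coordinate of a $K$-point on $Y^2=u_nd_nf^3(X)$, and apply Height Uniformity to this twisted curve to get $h(u_nd_n)\gg h(f^{n-3}(\alpha))$; this forces the odd-multiplicity part of $f^n(\alpha)$ to carry a definite proportion of the height, after which the $abc$ radical bound converts a large multiplicity-$\ge 3$ part into a large multiplicity-one part (Proposition \ref{prop:heightunif}). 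This is also where the restriction to $K=\QQ$ or imaginary quadratic enters, so that units and the class group do not inflate $h(u_nd_n)$. Without this (or an equivalent) argument, your proposal establishes nothing in the quadratic case.
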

	
	In particular, given any number field $K$ and $d\ge 3$, there is a uniform bound on the sizes of Zsigmondy sets associated to $\{f^i(\alpha)\}_{i\ge 0}$, where $f(x)=x^d+c\in\mathcal{O}_K$ and $\alpha\in K$ has infinite forward orbit under $f$.
	
	Theorem \ref{thm:main} is partly motivated by an application to dynamical Galois theory. Let $f\in K[x]$ of degree $d\ge 2$ be such that $f^n(x)$ has $d^n$ distinct roots in $\overline{K}$ for all $n\ge1$. Consider the \textit{pre-image tree} associated to $f$, whose vertices are given by \[T=\bigsqcup_{n\ge0}f^{-n}(0),\] and where two vertices $y\in f^{-n}(0)$ and $z\in f^{-n+1}(0)$ are connected by an edge if and only if $f(y)=z$. As $f$ is defined over $K$, the absolute Galois group $\Gal(\overline{K}/K)$ acts on $T$ by bijections, and preserves edge connectivity relations. When $f$ is unicritical and $K$ contains a primitive $d$-th root of unity, $\Gal(\overline{K}/K)$ acts by cyclic permutations on the roots of $f(x)-\alpha$ for any $\alpha\in T$. One thereby obtains a representation \[\rho_f: \Gal(\overline{K}/K)\to[C_d]^{\infty},\] where $C_d$ denotes a cyclic permutation group generated by a $d$-cycle, and $[C_d]^{\infty}$ is the infinite iterated wreath product of $C_d$. The image of this representation has been a major object of study in arithmetic dynamics \cite{Hamblen,Hindes2,Jones2}. One source of motivation for this interest is the following analogy with Serre's open image theorem in number theory. Let $E/K$ be an elliptic curve. If $l\in\mathbb{Z}$ is a prime, the inverse limit of the $l^n$-torsion subgroups of $E$ forms the $l$-adic Tate module $T_l(E)$. If $E/K$ has complex multiplication, then the representation \[\rho_{E,l}: \Gal(\overline{K}/K)\to \textup{Aut}(T_l(E))\cong \GL_2(\Z_l)\] has infinite index image \cite{Silverman2}. Serre's open image theorem addresses the case where $E/K$ does not have complex multiplication. \begin{thm}
		[Serre, \cite{Serre}] Let $E$ be an elliptic curve over a number field $K$ without complex multiplication. Then the representation $\rho_{E,l}$ arising from the Galois action on the $l$-adic Tate module has finite index image in $GL_2(\Z_l)$.\end{thm}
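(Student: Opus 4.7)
\medskip

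My plan is to reduce the $\ell$-adic statement to its mod-$\ell$ analogue, and then exploit the classification of maximal subgroups of $\GL_2(\mathbb{F}_\ell)$. Concretely, I would first prove a lifting lemma: for $\ell\ge 5$, if the mod-$\ell$ representation $\bar\rho_{E,\ell}\colon \Gal(\overline{K}/K)\to\GL_2(\mathbb{F}_\ell)$ has image containing $\SL_2(\mathbb{F}_\ell)$, then $\rho_{E,\ell}$ has image containing $\SL_2(\mathbb{Z}_\ell)$, hence open image. This is a pro-$\ell$ argument using the fact that $\SL_2(\mathbb{F}_\ell)$ has no nontrivial $\mathbb{F}_\ell[\SL_2(\mathbb{F}_\ell)]$-quotients among the irreducible constituents of the Lie algebra $\mathfrak{sl}_2$, so any closed subgroup of $\GL_2(\mathbb{Z}_\ell)$ surjecting mod $\ell$ must be all of the relevant preimage. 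Given this, the theorem reduces to showing that for every $\ell$, the index $[\GL_2(\mathbb{Z}_\ell):\mathrm{Im}(\rho_{E,\ell})]$ is finite, together with the stronger statement that for all but finitely many $\ell$, $\bar\rho_{E,\ell}$ is surjective; these two facts together yield open image in $\prod_\ell \GL_2(\mathbb{Z}_\ell)$, and I would invoke this uniform surjectivity to deduce that for any single $\ell$ the image is open.

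Next I would carry out the mod-$\ell$ surjectivity argument. Dickson's classification says that any proper subgroup $H$ of $\GL_2(\mathbb{F}_\ell)$ whose image in $\PGL_2$ is not cyclic, dihedral, $A_4$, $S_4$, or $A_5$, lies in a Borel subgroup, in the normalizer of a split Cartan, or in the normalizer of a non-split Cartan. I would rule out each case in turn. In the Borel case, $E$ would admit a $K$-rational $\ell$-isogeny, so the $j$-invariant would lie on the modular curve $Y_0(\ell)$; for $\ell$ sufficiently large one invokes Mazur-type isogeny bounds (over $\mathbb{Q}$ this is Mazur's theorem, over general number fields one uses Momose or the genus growth of $X_0(\ell)$) to obtain finitely many possible $j$, none of which is $j(E)$ since $E$ is non-CM. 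In the split Cartan case, both eigenvalues of Frobenius would lie in $\mathbb{F}_\ell$, giving a rational point on $X_{\mathrm{split}}(\ell)$; Chebotarev combined with the existence of a Frobenius whose trace is not a square mod $\ell$ provides the contradiction. The non-split Cartan case is dealt with by producing a Frobenius whose characteristic polynomial splits over $\mathbb{F}_\ell$. For the exceptional projective-image cases, one uses that the projective image has order bounded independent of $\ell$, so it would force an isogeny of bounded degree to an elliptic curve with non-trivial endomorphism ring, contradicting the non-CM hypothesis.

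The hard step, and the one absorbing most of the work, is the Borel exclusion for large $\ell$, because the necessary uniform isogeny bound is a deep result in itself: Mazur's theorem on rational isogenies for $K=\mathbb{Q}$, and its extensions by Momose, David, and Bilu--Parent for general number fields. A close second is verifying the lifting lemma for the small primes $\ell=2,3$, where the kernel of reduction is not pro-$\ell$ in a clean way and one must argue directly using the structure of $\GL_2(\mathbb{Z}_\ell)$; for these primes I would allow the mod-$\ell^n$ image to be smaller and only conclude openness after passing to sufficiently high level. Once these obstacles are cleared, assembling the pieces and invoking the lifting lemma gives the desired open image statement for each $\ell$.
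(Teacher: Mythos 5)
The statement you are proving is not proved in the paper at all: it is quoted, with a citation, as Serre's theorem, and the proof in the cited reference proceeds along completely different lines from yours --- one shows that the Lie algebra $\mathfrak{g}_\ell$ of the $\ell$-adic image acts irreducibly on $V_\ell(E)$ (via Shafarevich's finiteness theorem: a stable line would produce infinitely many pairwise non-isomorphic curves isogenous to $E$ with good reduction outside a fixed set), that $\mathfrak{g}_\ell$ contains the homotheties (Tate's results on $p$-divisible groups / Hodge--Tate theory), and then that any proper subalgebra of $\mathfrak{gl}_2$ with these two properties is a non-split Cartan plus scalars, which by Tate's theorem on $\mathrm{End}(E)\otimes\Q_\ell$ would force $E$ to have complex multiplication. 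Measured against the statement itself, your proposal has a genuine logical gap: you say the theorem ``reduces to showing that for every $\ell$, the index $[\GL_2(\Z_\ell):\mathrm{Im}(\rho_{E,\ell})]$ is finite,'' but that \emph{is} the theorem, so the reduction is circular. What your mod-$\ell$ strategy can deliver (granting the exclusion arguments) is surjectivity of $\bar\rho_{E,\ell}$ for all but finitely many $\ell$, which together with your lifting lemma gives openness only at those $\ell$; at the finitely many exceptional primes --- and these are not only $\ell=2,3$, but any prime where the mod-$\ell$ image is small --- you offer no argument. ``Passing to sufficiently high level'' does not close this: a proper closed subgroup of $\GL_2(\Z_\ell)$ (e.g.\ the normalizer of a Cartan) surjects onto arbitrarily deep finite quotients of itself, so no finite-level computation alone yields openness; one needs an $\ell$-adic input such as the Lie-algebra argument above.

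Two further points. First, the results you invoke to exclude the Borel case (Mazur, Momose, Bilu--Parent) are uniform statements over all elliptic curves, are restricted to $\QQ$ or special fields, and in the generality of an arbitrary number field are not available; they are also far deeper than what the fixed-curve statement needs. For a single non-CM curve $E/K$, the finiteness of the set of primes $\ell$ for which $E$ admits a $K$-rational $\ell$-isogeny already follows from Shafarevich's finiteness theorem, which is exactly the tool Serre uses. Second, your split and non-split Cartan exclusions (``Chebotarev combined with the existence of a Frobenius whose trace is not a square mod $\ell$'') presuppose information about the image that you have not established; in Serre's treatment these cases are handled only after the $\ell$-adic openness at each individual prime is known, so the order of quantifiers in your outline cannot be repaired without importing precisely the single-$\ell$ result you set out to prove.
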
 This leads to the question of whether a uniform bound exists on this index: in other words, if $G_l(E)$ denotes the image of $\rho_{E,l}$, is there an $N$ such that $[\textup{Aut}(T_l(E)):G_l(E)]\le N$ for all elliptic curves $E/K$ without complex multiplication?  The answer to this question is not yet known.
	
	In the dynamical context, the map $f\in K[x]$ plays the role of multiplication by $l$. If $f$ is unicritical and post-critically finite (PCF), then one can show that the image of $\rho_f$ has infinite index in $[C_d]^{\infty}$. Otherwise, one might pose a similar question as for elliptic curves. 
	
	\begin{question}{\label{question:uniformity}}
		Let $d\ge 2$, and let $K$ be a number field containing a primitive $d$-th root of unity. Does there exist an $N$ such that for all degree $d$, non-PCF, unicritical polynomials $f(x)\in K[x]$ irreducible over $K$, the image $G_K(f)$ of the representation \[\rho_f: \Gal(\overline{K}/K)\to[C_d]^{\infty}\] has index at most $N$? 
	\end{question} We remark that the irreducibility of $f(x)$ over $K$ rules out the obvious counterexamples to such a claim. To date, the only known approach to such a question is to bound the index $n$ such that $K(f^{-n}(0))/K(f^{-n+1}(0))$ fails to have the maximal possible degree, which is $d^{d^{n-1}}$. This question in turn hinges on a study of the critical orbit of $f$. Under mild assumptions on $f$, it suffices to show that for all sufficiently large $n$, $f^n(\gamma)$ has a primitive prime divisor of appropriate multiplicity, where $\gamma$ is the unique finite critical point of $f$. Proving Galois uniformity as in Question \ref{question:uniformity} then requires one to produce uniform bounds on Zsigmondy sets.
	
	As a corollary of the proof of Theorem \ref{thm:main}, we deduce a theorem concerning the Galois uniformity of unicritical polynomials over a given number field $K$. Call $f(x)\in K[x]$ \textit{stable over} $K$ if $f^n(x)$ is irreducible over $K$ for all $n\ge1$. Let $\nu(f)$ be as in the statement of Theorem \ref{thm:main}.
	
	\begin{thm}{\label{thm:introgalunif}} Let $d\ge2$, and let $K$ be a number field containing the $d$-th roots of unity. If $d\ge 3$, assume the $abc$-Conjecture for $K$; if $d=2$, assume further the Height Uniformity Conjecture and that $K=\QQ$ or $K$ is an imaginary quadratic field. Let $\tau\ge 0$. Then the index $[[C_d]^{\infty}:G_K(f)]$ is uniformly bounded over all stable, non-PCF maps $f(x)=(x-\gamma)^d+c\in K[x]$ such that $\nu(f)\le\tau$.
		
	\end{thm}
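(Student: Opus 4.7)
The plan is to derive Theorem \ref{thm:introgalunif} as a corollary of Theorem \ref{thm:main} applied to the critical orbit. Since $f$ is non-PCF, the critical point $\gamma \in K$ has infinite forward orbit under $f$, so Theorem \ref{thm:main} (with $\alpha = \gamma$) produces constants $D_1, D_2$ depending only on $K$ and $d$ such that $f^n(\gamma)$ admits a multiplicity-one primitive prime divisor for every $n > D_1\log^+(\nu(f)) + D_2$. Because $\nu(f) \le \tau$, this threshold is bounded above by $N_0 := D_1\log^+(\tau) + D_2$, uniformly over the family of $f$ under consideration.

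The second step is to translate the presence of such a prime divisor into a statement about Galois degrees in the pre-image tree. Writing $K_n = K(f^{-n}(0))$, the hypotheses that $f$ is stable over $K$ and that $K \supseteq \mu_d$ yield the Kummer description
\[
K_n = K_{n-1}\bigl(\{(\beta - c)^{1/d} : \beta \in f^{-(n-1)}(0)\}\bigr),
\]
and a short calculation using $f^{n-1}(x) = \prod_\beta (x - \beta)$ together with $c = f(\gamma)$ gives $\prod_\beta (\beta - c) = \pm f^n(\gamma)$. A multiplicity-one primitive prime $\mathfrak{p}$ of $f^n(\gamma)$ does not divide any earlier iterate and so is unramified in the Kummer tower below $K_{n-1}$; combining its valuation datum with the transitivity of $\Gal(K_{n-1}/K)$ on $f^{-(n-1)}(0)$ (a consequence of stability) forces the $d^{n-1}$ classes $\{\beta - c\}$ to be $\mathbb{F}_d$-independent in $K_{n-1}^*/(K_{n-1}^*)^d$. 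By Kummer theory, $[K_n : K_{n-1}] = d^{d^{n-1}}$ for every $n > N_0$.

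For the conclusion, I would express the index as the telescoping product
\[
[[C_d]^\infty : G_K(f)] = \prod_{n \ge 1} \frac{d^{d^{n-1}}}{[K_n : K_{n-1}]}.
\]
Every factor with $n > N_0$ equals $1$ by the previous step, while each factor with $n \le N_0$ is at most $d^{d^{n-1}}$. The total index is therefore bounded by $\prod_{n=1}^{N_0} d^{d^{n-1}}$, a quantity depending only on $\tau$, $d$, and $K$, giving the desired uniform bound.

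The main obstacle, in my view, is the second step: a single multiplicity-one primitive prime of $f^n(\gamma)$ directly rules out only that $f^n(\gamma)$ itself is a $d$-th power in $K_{n-1}^*$, and promoting this to full $\mathbb{F}_d$-independence of all $d^{n-1}$ translates $\beta - c$ requires carefully exploiting the Galois transitivity on $f^{-(n-1)}(0)$. This is where the standard dynamical-Kummer machinery for unicritical polynomials, developed in the works of Jones, Stoll, Hindes, and others, must be invoked. By contrast, the $abc$-Conjecture input from Theorem \ref{thm:main} is precisely what makes the cutoff $N_0$ uniform in $f$.
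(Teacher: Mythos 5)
Your overall strategy (Zsigmondy bound on the critical orbit, then a Kummer/Stoll-type maximality criterion, then a telescoping bound on the finitely many defective levels) is the same as the paper's, but there are two genuine gaps. First, you cannot simply invoke Theorem \ref{thm:main}: that theorem is stated only for $f\in P_d$, i.e.\ with $c-\gamma\in\mathcal{O}_K$, whereas Theorem \ref{thm:introgalunif} explicitly drops this integrality hypothesis (the paper remarks on exactly this contrast after the statement). The paper's proof of Theorem \ref{thm:galunif} has to rework the height input for the special case $\alpha=\gamma$: it proves Lemma \ref{lem:mincan2''}, using the observation from \cite{Ingram} that $\hat{h}_f(0)\ge\kappa\max\{1,h(c)\}$ for non-PCF $x^d+c$, and then checks that the only other place integrality entered Propositions \ref{prop:rad} and \ref{prop:heightunif} was the control of primes where $f^n(\alpha)$ has negative valuation, which for $\alpha=\gamma$ is automatic (such primes satisfy $v_\mathfrak{p}(\gamma)<0$ or $v_\mathfrak{p}(c)<0$). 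Your proposal contains none of this, and without it the uniform cutoff $N_0$ is not justified for the family in the theorem. (The condition $c\ne 0$, by contrast, is harmless: stability forces it.)

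Second, your ramification step is too quick: a multiplicity-one primitive prime divisor $\mathfrak{p}$ of $f^n(\gamma)$ need not be unramified in $K_{n-1}$. Primitivity only says $v_\mathfrak{p}(f^m(\gamma))\le 0$ for $m<n$, so $\mathfrak{p}$ may divide $d$, or may be a prime where $c$ or $\gamma$ has nonzero (even negative) valuation; in either case it can ramify in the tower below $K_{n-1}$ (the discriminant formula $\textup{Disc}(f^i)=\pm d^{d^i}\textup{Disc}(f^{i-1})(f^i(\gamma))^{d-1}$ has the factor $d^{d^i}$, and ramification control via discriminants also requires the coefficients to be $\mathfrak{p}$-integral). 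The existence statement of Theorem \ref{thm:main} alone cannot exclude this. The paper fixes it quantitatively: from the proof of Theorem \ref{thm:lineartau} one has $\sum_{\mathfrak{p}\in Y_{\textup{prim}}}N_\mathfrak{p}\ge\frac{\epsilon}{2}h(f^n(\gamma))$, and by Lemma \ref{lem:heightgrowth} together with (\ref{eqn:c}) and (\ref{eqn:alpha}) this eventually exceeds $h(d)+h(c)+h(\gamma)$, so one may choose a primitive multiplicity-one prime $\mathfrak{p}_n$ with $v_{\mathfrak{p}_n}(d)=v_{\mathfrak{p}_n}(c)=v_{\mathfrak{p}_n}(\gamma)=0$; only then does the unramifiedness argument go through, after which Lemma \ref{lem:Stoll} (the Hamblen--Jones--Madhu criterion, which is the ``dynamical Kummer machinery'' you defer to) gives maximality of $K_n/K_{n-1}$. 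Your concluding telescoping bound on the index is fine once these two points are repaired.
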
 Note that Theorem \ref{thm:introgalunif} does not require $c-\gamma\in\mathcal{O}_K$, in contrast to Theorem \ref{thm:main}. In Section \ref{section:gal}, we exhibit an infinite family of quadratic non-PCF polynomials $\{f_j\}_{j\ge2}$ stable over $K=\QQ$ having the property that for each $j$, the extension $K(f_j^{-j}(0))/K(f_j^{-j+1}(0))$ fails to have the maximal possible degree $2^{2^{j-1}}$. This demonstrates that proving an affirmative answer to Question \ref{question:uniformity} would require both a bound on the number of extensions $K(f^{-n}(0))/K(f^{-n+1}(0))$ failing to be maximal, as well as a bound on the defect \[\fr{d^{d^{n-1}}}{[K(f^{-n}(0)):K(f^{-n+1}(0))]}.\] On the other hand, proving a negative answer to Question \ref{question:uniformity} would likely require showing that the above defect is unbounded across some family.
	\newline
	
	\indent \textbf{Acknowledgements}. I would like to thank Laura DeMarco for helpful discussions regarding this article, and Wade Hindes for introducing me to questions of Galois uniformity through his thesis \cite{Hindes2}. 
	
	\section{Preliminaries}
	
	We begin by introducing notation and basic definitions. If $K/\QQ$ is a number field, and $\mathfrak{p}$ is a finite prime of $K$ with residue field $k_{\mathfrak{p}}$, we let \[N_{\mathfrak{p}}=\fr{\log(\#k_{\mathfrak{p}})}{[K:\Q]}.\] For any $\alpha\in K^*$, the \textit{height} of $\alpha$ is defined as \begin{equation}{\label{eqn:height}} h(\alpha)=-\sum_{\textup{primes }\mathfrak{p}\textup{ of }\mathcal{O}_K}\min\{v_\mathfrak{p}(\alpha),0\}N_\mathfrak{p}+\dfrac{1}{[K:\Q]}\sum_{\sigma:K\hookrightarrow\C}\max\{\log|\sigma(\alpha)|,0\},\end{equation} where $v_\mathfrak{p}$ is the standard $\mathfrak{p}$-adic valuation. (Note that we do not identify complex embeddings $\sigma:K\hookrightarrow\C$ in any way.) Set $h(0)=0$. For a number field $K$, let $M_K$ denote the set of distinct absolute values of $K$ extending those on $\QQ$. Let $M_K^\infty$ denote the set of archimedean places of $K$. Then the height $h(\alpha)$ of $\alpha\in K^*$ can alternatively be expressed as \begin{equation}{\label{eqn:normalhteqn}}h(\alpha)=\fr{1}{[K:\Q]}\sum_{v\in M_K} [K_v:\QQ_v]\log\max\{1,|\alpha|_v\}.\end{equation}
	If $\alpha\in K^*$, then from the product formula, we have the inequality \begin{equation}{\label{eqn:divisortoheight}}\sum_{v_{\mathfrak{p}}(\alpha)>0}v_\mathfrak{p}(\alpha)N_\mathfrak{p}\le h(\alpha).\end{equation} We will use this inequality repeatedly in \S \ref{section:abc} and \S \ref{section:quadratic}. 
	
	The \textit{canonical height} attached to $f\in \overline{\Q}[x]$ is by definition \[\hat{h}_f(\alpha)=\lim_{n\to\infty}\fr{1}{d^n}h(f^n(\alpha))\] for $\alpha\in \overline{\Q}$. It satisfies the transformation rule \[\hat{h}_f(f^n(\alpha))=d^n\hat{h}_f(\alpha).\] The following is a standard lemma about heights which will be essential to later proofs. 
	
	\begin{lem}{\label{lem:triangle}}\cite[Proposition 1.5.15]{BG}
		Let $K$ be a number field. For any $\alpha_1,\alpha_2,\dots,\alpha_n\in K$, \[h(\alpha_1+\dots+\alpha_n)\le \log n + h(\alpha_1)+\dots+h(\alpha_n).\] \end{lem}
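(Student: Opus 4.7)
The plan is to prove this place-by-place using the local-global decomposition of the height in equation \eqref{eqn:normalhteqn}. Writing $\alpha = \alpha_1 + \cdots + \alpha_n$, I want to bound $\log\max\{1,|\alpha|_v\}$ in terms of $\sum_i \log\max\{1,|\alpha_i|_v\}$ at each place $v \in M_K$, then sum with the weights $[K_v:\QQ_v]/[K:\QQ]$.

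First I would handle the non-archimedean places. If $v$ is finite, the ultrametric inequality gives
\[
|\alpha|_v \;\le\; \max_{1\le i\le n} |\alpha_i|_v \;\le\; \prod_{i=1}^n \max\{1,|\alpha_i|_v\},
\]
which implies $\log\max\{1,|\alpha|_v\} \le \sum_i \log\max\{1,|\alpha_i|_v\}$, with no extra $\log n$ term needed. For an archimedean place $v$, the ordinary triangle inequality only gives $|\alpha|_v \le n \max_i |\alpha_i|_v$, so I get
\[
\log\max\{1,|\alpha|_v\} \;\le\; \log n + \sum_{i=1}^n \log\max\{1,|\alpha_i|_v\}.
\]

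Now I would weight each local inequality by $[K_v:\QQ_v]/[K:\QQ]$ and sum over $v \in M_K$. The right-hand side becomes $\sum_i h(\alpha_i)$ plus a contribution of
\[
\frac{\log n}{[K:\QQ]}\sum_{v \in M_K^\infty}[K_v:\QQ_v],
\]
and by the standard identity $\sum_{v\mid\infty}[K_v:\QQ_v] = [K:\QQ]$, this contribution simplifies to exactly $\log n$. Combining yields $h(\alpha_1+\cdots+\alpha_n) \le \log n + \sum_i h(\alpha_i)$, as required.

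There is no real obstacle here; the only mild subtlety is making sure that the archimedean weighting collapses cleanly to give $\log n$ rather than a weighted sum of logs, which is precisely what the degree identity $\sum_{v\mid\infty}[K_v:\QQ_v]=[K:\QQ]$ ensures. The proof is essentially a one-line reduction to the two triangle inequalities (ultrametric at finite places, ordinary at infinite places).
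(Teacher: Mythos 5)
Your proof is correct and is essentially the same argument as the paper's: a place-by-place bound on $\log\max\{1,|\alpha_1+\dots+\alpha_n|_v\}$ (ultrametric inequality at finite places, ordinary triangle inequality with the factor $n$ at archimedean places), followed by summing against the weights $[K_v:\QQ_v]/[K:\QQ]$ and using $\sum_{v\in M_K^\infty}[K_v:\QQ_v]=[K:\QQ]$ to collapse the archimedean contribution to $\log n$. The paper merely packages the two cases into a single factor $\epsilon_v\in\{1,n\}$; there is no substantive difference.
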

	
	\begin{proof}
		For $v\in M_K$, choose $i_v$ to satisfy $\max\{|\alpha_1|_v,\dots,|\alpha_n|_v\}=|\alpha_{i_v}|_v$. Since \[|\alpha_1+\dots+\alpha_n|_v\le \epsilon_v|\alpha_{i_v}|_v,\] where $\epsilon_v=n$ if $v$ is archimedean, and $\epsilon_v=1$ otherwise, it follows that \[\log\max\{1,|\alpha_1+\dots+\alpha_n|_v\}\le\log\epsilon_v+\log\max\{1,|\alpha_{i_v}|_v\}\le \log\epsilon_v +\sum_{i=1}^n \log\max\{1,|\alpha_i|_v\}.\] Noting that \[\fr{1}{[K:\QQ]}\sum_{v\in M_K} [K_v:\QQ_v]\log\epsilon_v=\fr{\log n}{[K:\QQ]}\sum_{v\in M_K^\infty} [K_v:\QQ_v]=\log n\] and applying (\ref{eqn:normalhteqn}) completes the proof.
	\end{proof}
	
	\begin{lem}{\label{lem:ineq1}} Let $d\ge2$, and let $f(x)=x^d+c\in \overline{\QQ}[x]$. For any $\alpha\in \overline{\QQ}$, \[|h(\alpha)-\hat{h}_f(\alpha)|\le \dfrac{1}{d-1}(h(c)+\textup{log}(2)).\]
		
	\end{lem}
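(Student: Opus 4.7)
The plan is a standard telescoping argument once one establishes a one-step comparison between $h(f(\alpha))$ and $d\cdot h(\alpha)$. Since $f(x) = x^d + c$, I would first derive the two-sided bound
\[
\bigl|\,h(f(\alpha)) - d\,h(\alpha)\,\bigr| \;\le\; h(c) + \log 2
\]
for every $\alpha \in \overline{\QQ}$. The upper direction follows from $h(\alpha^d + c) \le h(\alpha^d) + h(c) + \log 2$ by Lemma~\ref{lem:triangle} combined with the multiplicativity $h(\alpha^d) = d\,h(\alpha)$. The lower direction uses the same identity written as $\alpha^d = f(\alpha) - c$, giving $d\,h(\alpha) = h(\alpha^d) \le h(f(\alpha)) + h(-c) + \log 2 = h(f(\alpha)) + h(c) + \log 2$.

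Next I would iterate: substituting $f^n(\alpha)$ for $\alpha$ yields
\[
\bigl|\,h(f^{n+1}(\alpha)) - d\,h(f^n(\alpha))\,\bigr| \;\le\; h(c) + \log 2
\]
for every $n \ge 0$. Dividing by $d^{n+1}$ shows that the sequence $\bigl(d^{-n} h(f^n(\alpha))\bigr)_{n\ge 0}$ is Cauchy, with
\[
\left|\,\frac{h(f^{n+1}(\alpha))}{d^{n+1}} - \frac{h(f^n(\alpha))}{d^n}\,\right| \;\le\; \frac{h(c) + \log 2}{d^{n+1}}.
\]
Summing this telescoping estimate from $n = 0$ to $\infty$ and using $\hat h_f(\alpha) = \lim_n d^{-n} h(f^n(\alpha))$ produces
\[
\bigl|\,\hat h_f(\alpha) - h(\alpha)\,\bigr| \;\le\; \sum_{n=0}^{\infty} \frac{h(c)+\log 2}{d^{n+1}} \;=\; \frac{h(c) + \log 2}{d-1},
\]
which is the desired inequality.

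There is no real obstacle here; the only point requiring a little care is accounting for both directions in the one-step comparison (so that the telescoping bound controls $\hat h_f - h$ rather than just one side), and correctly summing the geometric series $\sum_{n\ge 1} d^{-n} = 1/(d-1)$. The factor of $\log 2$ in Lemma~\ref{lem:triangle} (with $n = 2$ summands) is exactly what produces the $\log 2$ in the statement.
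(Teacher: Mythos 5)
Your proposal is correct and follows essentially the same route as the paper: a one-step estimate $|h(f(\alpha))-d\,h(\alpha)|\le h(c)+\log 2$ obtained from Lemma~\ref{lem:triangle} together with $h(\alpha^d)=d\,h(\alpha)$, followed by telescoping along the orbit and summing the geometric series to get the factor $\frac{1}{d-1}$. Your write-up is in fact slightly more careful than the paper's, since you spell out both directions of the one-step bound explicitly.
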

	
	\begin{proof} By Lemma \ref{lem:triangle}, we have \[|dh(\alpha)-h(f(\alpha))|\le h(c)+\textup{log}2.\] Taking a telescoping sum, we obtain \begin{equation*}\begin{split}\lim_{n\to\infty} \left|h(\alpha)-\frac{1}{d^n}h(f^n(\alpha))\right| & =\lim_{n\to\infty} \sum_{j=0}^{n-1} \fr{1}{d^j}\left|f^j(\alpha)-\fr{1}{d}f^{j+1}(\alpha)\right| \\ & \le \lim_{n\to\infty} \sum_{j=0}^{n-1} \fr{1}{d^j} [(1/d)(h(c)+\textup{log}2)]=\fr{1}{d-1}(h(c)+\textup{log}2).\end{split}\end{equation*}
		
	\end{proof}
	
	\begin{lem}{\label{lem:mincan2}} Fix $d\ge 2$, and let $K$ be a number field. There exists a $\kappa>0$ depending only on $d$ and on $K$ such that \[h(f^n(\alpha))\ge \kappa d^n \max\{1,h(c)\}-\dfrac{1}{d-1}(h(c)+\textup{log}2)\] for all $f(x)=x^d+c\in\mathcal{O}_K[x]$ and all $\alpha\in K$ having infinite forward orbit under $f$. \end{lem}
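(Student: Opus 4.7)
The plan is to apply Lemma~\ref{lem:ineq1} with $f^n(\alpha)$ in place of $\alpha$; combined with the transformation rule $\hat{h}_f(f^n(\alpha))=d^n\hat{h}_f(\alpha)$, this gives
\[
h(f^n(\alpha))\;\ge\;d^n\hat{h}_f(\alpha)\;-\;\tfrac{1}{d-1}(h(c)+\log 2).
\]
The lemma therefore reduces to proving a uniform Lehmer-type lower bound: there exists $\kappa=\kappa(K,d)>0$ such that $\hat{h}_f(\alpha)\ge\kappa\max\{1,h(c)\}$ whenever $f(x)=x^d+c\in\mathcal{O}_K[x]$ and $\alpha\in K$ has infinite forward orbit under $f$.

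I would prove this bound by a case analysis on $h(c)$ and on $h(\alpha)$. When $h(c)\le 1$, Northcott leaves only finitely many $c\in\mathcal{O}_K$; for each such $c$, Lemma~\ref{lem:ineq1} together with Northcott shows that $\{\alpha\in K:\hat{h}_{f_c}(\alpha)\le 1\}$ is a finite set whose non-preperiodic elements attain a positive minimum of $\hat{h}_{f_c}$, so minimizing over the finite set of $c$ gives $\kappa$ in this regime. When $h(c)>1$, if $h(\alpha)\ge\tfrac{2}{d-1}h(c)$ then Lemma~\ref{lem:ineq1} directly gives $\hat{h}_f(\alpha)\ge\tfrac{1}{d-1}h(c)-O(1)$; while if $h(\alpha)\le\tfrac{h(c)}{3d}$ the triangle inequality $h(\alpha^d+c)\ge h(c)-dh(\alpha)-\log 2$ yields $h(f(\alpha))\ge\tfrac{2}{3}h(c)-\log 2$, and applying Lemma~\ref{lem:ineq1} to $f(\alpha)$ and dividing by $d$ produces $\hat{h}_f(\alpha)\ge\tfrac{2d-5}{3d(d-1)}h(c)-O(1)$, which is positive for $d\ge 3$.

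The principal obstacle is the intermediate height range $\tfrac{h(c)}{3d}<h(\alpha)<\tfrac{2}{d-1}h(c)$, together with the case $d=2$, where the error $\tfrac{1}{d-1}(h(c)+\log 2)$ in Lemma~\ref{lem:ineq1} is already of the same order as $h(c)$ and no two-step triangle-inequality argument yields a positive bound. I would attempt to handle the intermediate range by iterating the dichotomy to $f^k(\alpha)$, using that $|h(f^{k+1}(\alpha))-dh(f^k(\alpha))|\le h(c)+\log 2$ together with the good reduction of $f$ at all finite primes (since $c\in\mathcal{O}_K$) to control the orbit. The most delicate step is ensuring that the iteration number $k$ needed to leave the intermediate range is bounded independently of $h(c)$; a naive Northcott count only bounds $k$ exponentially in $h(c)$, which would make $\kappa$ vanish, so one needs a more refined dynamical input, for instance a decomposition $\hat{h}_f=\sum_v\hat{\lambda}_v$ with $\hat{\lambda}_v(\alpha)=\log\max\{1,|\alpha|_v\}$ at finite $v$ and escape-rate bounds at archimedean $v$, together with the fact that $K$-rational preperiodic points of $f$ are integral (being roots of monic polynomials in $\mathcal{O}_K[x]$) and are confined at each archimedean place to the filled Julia set of radius $O(\max\{1,|c|_v^{1/d}\})$.
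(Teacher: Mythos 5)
Your first reduction is exactly the one the paper makes: combining Lemma~\ref{lem:ineq1} (applied to $f^n(\alpha)$) with $\hat{h}_f(f^n(\alpha))=d^n\hat{h}_f(\alpha)$ reduces the lemma to the uniform lower bound $\hat{h}_f(\alpha)\ge\kappa\max\{1,h(c)\}$ for all non-preperiodic $\alpha\in K$ and all $c\in\mathcal{O}_K$. But at that point the paper simply invokes Theorem 1 of Ingram's paper \cite{Ingram}, which is precisely this uniform Lehmer-type bound, whereas you attempt to prove it from scratch, and your argument does not close. The case analysis you give covers $h(c)\le 1$ (fine, by Northcott over the finitely many integral $c$ of bounded height), the large range $h(\alpha)\ge\tfrac{2}{d-1}h(c)$, and the small range $h(\alpha)\le\tfrac{h(c)}{3d}$; but the last of these already fails for $d=2$ (your constant $\tfrac{2d-5}{3d(d-1)}$ is negative there), and the intermediate range $\tfrac{h(c)}{3d}<h(\alpha)<\tfrac{2}{d-1}h(c)$ is left open for every $d$. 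That intermediate range is not a technicality: it is exactly where the content of Ingram's theorem lies, and as you yourself note, iterating the dichotomy only helps if the number of iterates needed to escape the intermediate range is bounded independently of $h(c)$, which a Northcott count does not give. Your closing gesture toward a decomposition of $\hat{h}_f$ into local escape-rate terms and filled-Julia-set estimates is in the spirit of how such bounds are actually proved, but it is a sketch of a proof of Ingram's theorem, not a proof.

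So the proposal has a genuine gap: the key input $\hat{h}_f(\alpha)\ge\kappa(d,K)\max\{1,h(c)\}$ (with $\kappa$ independent of $c$ and $\alpha$) is asserted but not established, and without it the stated lemma does not follow. To repair the argument you should either cite Ingram's Theorem 1 as the paper does (noting that the hypotheses $f$ monic and $c\in\mathcal{O}_K$ are what make that theorem applicable with a constant depending only on $d$ and $K$), or carry out in full the local-height/escape-rate argument you allude to, including the uniform treatment of the intermediate range and of $d=2$; the two-step triangle-inequality estimates alone cannot do this because the error term $\tfrac{1}{d-1}(h(c)+\log 2)$ is of the same order as the bound being sought.
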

	
	\begin{proof} Let $f(x)=x^d+c\in\mathcal{O}_K[x]$, and let $\beta\in K$ be such that $\hat{h}_f(\beta)\ne 0$, and $\hat{h}_f(\beta)\le \hat{h}_f(\alpha)$ for all $\alpha\in K$ such that $\hat{h}_f(\alpha)\ne 0$. Then \[\hat{h}_f(f^n(\beta))\le \hat{h}_f(f^n(\alpha))\] for all $n\ge 1$ and all $\alpha\in K$ such that $\hat{h}_f(\alpha)\ne 0$. From Theorem 1 of \cite{Ingram}, we know that there exists a $\kappa(d,K)>0$ such that \begin{equation}{\label{eqn1}}\hat{h}_{f}(\beta)\ge \kappa \max\{1,h(c)\}.\end{equation} Moreover, by Lemma \ref{lem:ineq1}, \[|h(f^n(\alpha))-\hat{h}_f(f^n(\alpha))|\le \dfrac{1}{d-1}(h(c)+\textup{log}2),\] and thus \[h(f^n(\alpha))-\hat{h}_f(f^n(\beta))\ge -\dfrac{1}{d-1}(h(c)+\textup{log}2)\] for all $\alpha\in K$ having infinite forward orbit under $f$. As $\hat{h}_f(f^n(\beta))=d^n\hat{h}_f(\beta)$, (\ref{eqn1}) implies that \[h(f^n(\alpha))\ge d^n\cdot \kappa\max\{1,h(c)\}-\dfrac{1}{d-1}(h(c)+\textup{log}2)\] for all $\alpha\in K$ having infinite forward orbit under $f$.
	\end{proof}
	
	\begin{rmk}
		The assumptions that $f(x)$ is monic and that $c-\gamma\in\mathcal{O}_K$ in Theorem \ref{thm:main} are necessary solely as a result of the fact that the proof of Theorem \ref{thm:main} relies on Lemma \ref{lem:lb}; Lemma \ref{lem:lb} in turn requires the map $f(x)$ to be of this form in order for the constant $\kappa$ to depend only on $d$ and $K$. We further note that Lemma \ref{lem:lb} (and hence, Theorem 1 of \cite{Ingram}) constitutes a crucial ingredient in the proof of every result in \S \ref{section:abc} and \S \ref{section:quadratic}, with the exception of Lemma \ref{lem:decomposition}. 
		
		The additional requirement that $c\ne 0$ in Theorem \ref{thm:main} arises because it is needed for Proposition \ref{prop:rad} to hold.
	\end{rmk}
	
	\begin{lem}{\label{lem:lb}} Let $f(x)=x^d+c\in\overline{\Q}[x]$, with $d\ge 2$. For any $\alpha\in\overline{\Q}$, \[h(f^n(\alpha))> d^n\left(h(\alpha)-\dfrac{2}{d-1}\max\{1,h(c)\}\right).\]
	\end{lem}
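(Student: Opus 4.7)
My plan is to get this by a one-step inverse triangle inequality, then iterate. Writing $\alpha^d = f(\alpha) - c$ and applying Lemma \ref{lem:triangle} to the right-hand side gives
\[
d\,h(\alpha) \;=\; h(\alpha^d) \;\le\; \log 2 + h(f(\alpha)) + h(c),
\]
which rearranges to $h(f(\alpha)) \ge d\,h(\alpha) - h(c) - \log 2$. This is the key one-step estimate; it is the natural dual to the upper bound used inside Lemma \ref{lem:ineq1}.

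Next I would iterate this inequality. By an easy induction on $n$ (or telescoping), one obtains
\[
h(f^n(\alpha)) \;\ge\; d^n h(\alpha) - (1+d+\cdots+d^{n-1})(h(c)+\log 2) \;=\; d^n h(\alpha) - \frac{d^n-1}{d-1}(h(c)+\log 2).
\]

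To finish, I would compare the correction term with the term $d^n\cdot\frac{2}{d-1}\max\{1,h(c)\}$ appearing in the claim. A short case analysis shows $h(c)+\log 2 < 2\max\{1,h(c)\}$: if $h(c)\ge 1$ then $h(c)+\log 2 < h(c)+1 \le 2h(c)$, and if $h(c)<1$ then $h(c)+\log 2 < 1+\log 2 < 2$. Combining this with $d^n-1 < d^n$ yields
\[
\frac{d^n-1}{d-1}(h(c)+\log 2) \;<\; \frac{d^n}{d-1}\cdot 2\max\{1,h(c)\},
\]
which rearranges into the desired strict inequality.

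I do not expect any real obstacle here; the statement is essentially the quantitative content of Lemma \ref{lem:ineq1}, rearranged so that the error is absorbed into $\frac{2}{d-1}\max\{1,h(c)\}$. The only care needed is in the final comparison, to ensure the strict inequality $>$ holds rather than just $\ge$, which is why I prefer the direct telescoping of the one-step bound over deriving the inequality from Lemma \ref{lem:ineq1} applied twice (the latter would give the weaker constant $\frac{d^n+1}{d-1}$ in place of $\frac{d^n-1}{d-1}$, though it would still suffice).
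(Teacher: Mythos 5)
Your proposal is correct and is essentially the paper's own argument: a one-step lower bound $h(f(\alpha))\ge dh(\alpha)-h(c)-\log 2$ obtained from Lemma \ref{lem:triangle}, iterated over the orbit, with the error term then absorbed into $\frac{2}{d-1}\max\{1,h(c)\}$. The only (cosmetic) difference is bookkeeping: the paper replaces $h(c)+\log 2$ by $2\max\{1,h(c)\}$ already at the one-step stage and gets the strict inequality by bounding the finite geometric sum by the infinite one, whereas you keep the exact error $\frac{d^n-1}{d-1}(h(c)+\log 2)$ and conclude at the end via $\log 2<1$ and $d^n-1<d^n$, which is equally valid.
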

	
	\begin{proof} From Lemma \ref{lem:triangle}, we have \[h(f(\alpha))\ge |dh(\alpha)-h(c)|-\textup{log}2\] and hence \[h(f(\alpha))-dh(\alpha)\ge -2\max\{h(c),\log2\}\ge -2\max\{1,h(c)\}\] for all $\alpha\in\overline{\Q}$. Next, observe that \begin{equation*}\begin{split}\dfrac{1}{d^n} h(f^n(\alpha))-h(\alpha)& =\sum_{j=0}^{n-1} \dfrac{1}{d^j} \left(\dfrac{1}{d} h(f^{j+1}(\alpha))-h(f^j(\alpha))\right) \\ & \ge \left( \sum_{j=0}^{n-1} \dfrac{1}{d^j}\right) \left(\dfrac{-2}{d}\right)\max\{1,h(c)\} \\ & >\left( \sum_{j=0}^{\infty} \dfrac{1}{d^j}\right) \left(\dfrac{-2}{d}\right)\max\{1,h(c)\} \\ & =\dfrac{-2}{d-1} \max\{1,h(c)\}.\end{split}\end{equation*} Therefore \[h(f^n(\alpha))> d^n\left(h(\alpha)-\dfrac{2}{d-1}\max\{1,h(c)\}\right).\] \end{proof} We now address the upper bound on $h(f^n(\alpha))$.
	
	\begin{lem}{\label{lem:upperbound}} Let $d\ge 2$, and let $f(x)=x^d+c\in\overline{\Q}[x]$, where $d\ge 2$. For any $\alpha\in\overline{\Q}$, \[h(f^n(\alpha))< \frac{d^n}{d-1}\left(\textup{log}2+h(c)\right)+d^nh(\alpha).\]\end{lem}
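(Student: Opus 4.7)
The plan is to mirror the proof of Lemma \ref{lem:lb}, but derive an upper estimate instead of a lower one. The starting point is again Lemma \ref{lem:triangle} applied to $f(\alpha) = \alpha^d + c$: this gives
\[
h(f(\alpha)) \le \log 2 + h(\alpha^d) + h(c) = \log 2 + d\,h(\alpha) + h(c),
\]
so $h(f(\alpha)) - d\,h(\alpha) \le \log 2 + h(c)$ for every $\alpha \in \overline{\mathbb{Q}}$. This is the one-step estimate that the telescoping argument will iterate.

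Next I would apply the same telescoping identity used in the proof of Lemma \ref{lem:lb}:
\[
\frac{1}{d^n} h(f^n(\alpha)) - h(\alpha) = \sum_{j=0}^{n-1} \frac{1}{d^j}\left(\frac{1}{d} h(f^{j+1}(\alpha)) - h(f^j(\alpha))\right).
\]
Plugging in the one-step bound (applied to $f^j(\alpha)$ in place of $\alpha$) yields each summand $\le \frac{1}{d^{j+1}}(\log 2 + h(c))$. Summing the finite geometric series and passing to the infinite series as an upper bound gives
\[
\frac{1}{d^n} h(f^n(\alpha)) - h(\alpha) < \frac{1}{d}\cdot \frac{d}{d-1}(\log 2 + h(c)) = \frac{1}{d-1}(\log 2 + h(c)).
\]
Multiplying through by $d^n$ yields the claimed inequality.

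There is no real obstacle here: the only subtlety is the direction of the inequality in the telescoping step, which flips sign relative to Lemma \ref{lem:lb} because we are bounding $h(f(\alpha)) - d\,h(\alpha)$ from above rather than from below; accordingly the $2\max\{1,h(c)\}$ that appeared in Lemma \ref{lem:lb} (coming from the absolute value in $|d\,h(\alpha) - h(c)|$) is replaced here by $\log 2 + h(c)$, which is exactly the constant in Lemma \ref{lem:triangle}. Strict inequality comes from replacing the finite geometric sum $\sum_{j=0}^{n-1} d^{-j}$ by the strictly larger $\sum_{j=0}^{\infty} d^{-j} = d/(d-1)$.
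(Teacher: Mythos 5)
Your proof is correct and is essentially identical to the paper's: the same one-step bound $h(f(\alpha))-dh(\alpha)\le\log 2+h(c)$ from Lemma \ref{lem:triangle} (together with $h(\alpha^d)=dh(\alpha)$), the same telescoping identity, and the same passage to the infinite geometric series to obtain strict inequality.
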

	
	\begin{proof} By Lemma \ref{lem:triangle}, we have \[h(f(\alpha))-dh(\alpha)\le \textup{log}2+h(c).\] Thus \begin{equation*}\begin{split}\frac{1}{d^n}h(f^n(\alpha))-h(\alpha)& =\sum_{j=0}^{n-1} \fr{1}{d^j}\left(\fr{1}{d}h(f^{j+1}(\alpha))-h(f^j(\alpha))\right) \\ & \le \sum_{j=0}^{n-1} \fr{1}{d^{j+1}} \left(\textup{log}2+h(c)\right) \\ & <\sum_{j=0}^{\infty} \fr{1}{d^{j+1}} \left(\textup{log}2+h(c)\right)=\frac{1}{d-1}(\textup{log}2+h(c)).\end{split}\end{equation*}  
		Therefore we obtain \[h(f^n(\alpha))< \frac{d^n}{d-1}\left(\textup{log}2+h(c)\right)+d^nh(\alpha).\]
		
	\end{proof}
	
	\section{Height bounds for unicritical polynomials}
	
	To prove Theorems \ref{thm:lineartau} and \ref{thm:quadraticVojta}, we formulate generalizations of Lemmas \ref{lem:mincan2}, \ref{lem:lb}, and \ref{lem:upperbound} to the case when the finite critical point is nonzero. The proofs follows quickly from the $\gamma=0$ case; we have separated the statements for ease of reading. As previously, let $d\ge 2$, let $K$ be a number field with $\mathcal{O}_K$ its ring of integers, and let $P_d=\{f(x)=(x-\gamma)^d+c\in K[x]\mid c-\gamma\in\mathcal{O}_K,c\ne 0\}$.
	
	\begin{lem}{\label{lem:mincan2'}} There exists a $\kappa>0$ depending only on $d$ and on $K$ such that \[h(f^n(\alpha))\ge \kappa d^n \max\{1,h(c-\gamma)\}-\dfrac{1}{d-1}(h(c-\gamma)+\textup{log}2)-h(\gamma)-\textup{log}2\] for all $f(x)\in P_d$ and all $\alpha\in K$ having infinite forward orbit under $f$. \end{lem}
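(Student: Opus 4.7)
The plan is to reduce to the $\gamma=0$ case already handled in Lemma \ref{lem:mincan2} by means of the linear conjugation $M(z)=z+\gamma$. Setting $g(z):=M^{-1}\circ f\circ M(z)=z^d+(c-\gamma)$, the hypothesis $c-\gamma\in\mathcal{O}_K$ built into the definition of $P_d$ guarantees that $g\in\mathcal{O}_K[x]$, exactly the setting of Lemma \ref{lem:mincan2}. Moreover, the orbits are related by $f^n(\alpha)=g^n(\alpha-\gamma)+\gamma$ for all $n\ge 0$, so $\alpha-\gamma$ has infinite forward orbit under $g$ whenever $\alpha$ has infinite forward orbit under $f$.

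Next, I would apply Lemma \ref{lem:mincan2} to $g$ and to the point $\alpha-\gamma$, which yields
\[
h(g^n(\alpha-\gamma))\ge \kappa d^n\max\{1,h(c-\gamma)\}-\frac{1}{d-1}(h(c-\gamma)+\log 2)
\]
for some $\kappa=\kappa(d,K)>0$ that, crucially, does not depend on the particular $f\in P_d$ under consideration.

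Finally, I would transfer this lower bound back to $f^n(\alpha)$ using the reverse form of Lemma \ref{lem:triangle}: applied to the identity $g^n(\alpha-\gamma)=f^n(\alpha)+(-\gamma)$, the lemma gives $h(g^n(\alpha-\gamma))\le h(f^n(\alpha))+h(\gamma)+\log 2$, and hence
\[
h(f^n(\alpha))\ge h(g^n(\alpha-\gamma))-h(\gamma)-\log 2.
\]
Substituting the bound from the previous step produces the claimed inequality verbatim.

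There is no genuine obstacle here; the entire substance is contained in Lemma \ref{lem:mincan2}, and the conjugation absorbs the role of $\gamma$ while contributing only the additive correction $h(\gamma)+\log 2$. The only points one must be mildly careful about are verifying that the integrality hypothesis transports to $g$ (immediate from $c-\gamma\in\mathcal{O}_K$) and that the infinite-orbit hypothesis transports across $M$ (immediate since $M$ is a bijection on $K$).
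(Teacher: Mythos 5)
Your proof is correct and follows essentially the same route as the paper: conjugate by the translation to reduce to $\tilde{f}(x)=x^d+(c-\gamma)\in\mathcal{O}_K[x]$, apply Lemma \ref{lem:mincan2} to $\alpha-\gamma$, and transfer the bound back via Lemma \ref{lem:triangle}, which costs exactly $h(\gamma)+\log 2$. The extra care you take with the integrality and infinite-orbit hypotheses is implicit in the paper's one-line argument but entirely consistent with it.
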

	
	\begin{proof} Let $\tilde{f}(x)=x^d+c-\gamma$, so that $f^n(\alpha)=\tilde{f}^n(\alpha-\gamma)+\gamma$. From Lemma \ref{lem:mincan2}, we have \[h(\tilde{f}^n(\alpha-\gamma))\ge \kappa d^n\max\{1,h(c-\gamma)\}-\fr{1}{d-1}(h(c-\gamma)+\log2).\] Applying Lemma \ref{lem:triangle} finishes the proof. 
	\end{proof}
	
	\begin{lem}{\label{lem:lb'}} For any $f(x)=(x-\gamma)^d+c\in \overline{\Q}[x]$, and any $\alpha\in\overline{\Q}$, \begin{equation}{\label{abcline5}}h(f^n(\alpha))> d^n\left(h(\alpha-\gamma)-\dfrac{2}{d-1}\max\{1,h(c-\gamma)\}\right)-h(\gamma)-\textup{log}2.\end{equation}
	\end{lem}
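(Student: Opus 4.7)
The plan is to reduce to the $\gamma=0$ case already handled by Lemma \ref{lem:lb}, exactly as in the proof of Lemma \ref{lem:mincan2'}. Set $\tilde f(x)=x^d+(c-\gamma)$, so that $f(x)=\tilde f(x-\gamma)+\gamma$ and, by induction on $n$,
\[
f^n(\alpha)=\tilde f^n(\alpha-\gamma)+\gamma.
\]
Apply Lemma \ref{lem:lb} to $\tilde f$ at the point $\alpha-\gamma\in\overline{\QQ}$, using that the constant term of $\tilde f$ is $c-\gamma$:
\[
h\bigl(\tilde f^n(\alpha-\gamma)\bigr)>d^n\left(h(\alpha-\gamma)-\frac{2}{d-1}\max\{1,h(c-\gamma)\}\right).
\]

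Next, translate back. Since $\tilde f^n(\alpha-\gamma)=f^n(\alpha)-\gamma$, Lemma \ref{lem:triangle} applied with two summands gives
\[
h\bigl(\tilde f^n(\alpha-\gamma)\bigr)=h\bigl(f^n(\alpha)-\gamma\bigr)\le \log 2 + h(f^n(\alpha))+h(\gamma).
\]
Combining the two displayed inequalities yields
\[
h(f^n(\alpha))\ge h\bigl(\tilde f^n(\alpha-\gamma)\bigr)-h(\gamma)-\log 2 > d^n\left(h(\alpha-\gamma)-\frac{2}{d-1}\max\{1,h(c-\gamma)\}\right)-h(\gamma)-\log 2,
\]
which is precisely (\ref{abcline5}).

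There is no real obstacle: the whole content is the conjugation identity $f^n(\alpha)=\tilde f^n(\alpha-\gamma)+\gamma$ together with the previously established $\gamma=0$ bound and the standard triangle inequality for heights. The only thing to be careful about is that the error term introduced by translating back is additive, of size $h(\gamma)+\log 2$, and independent of $n$, which is exactly what appears on the right-hand side of the claimed inequality.
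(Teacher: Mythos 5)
Your proof is correct and follows exactly the paper's approach: conjugate by the translation $M(x)=x+\gamma$ to reduce to $\tilde f(x)=x^d+(c-\gamma)$, apply Lemma \ref{lem:lb}, and use Lemma \ref{lem:triangle} to translate back at the cost of $h(\gamma)+\log 2$. Your write-up simply spells out the triangle-inequality step that the paper leaves implicit.
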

	
	\begin{proof} From Lemma \ref{lem:lb}, we know that \[h(\tilde{f}^n(\alpha-\gamma))> d^n(h(\alpha-\gamma)-\fr{2}{d-1}\max\{1,h(c-\gamma)\}.\] Lemma \ref{lem:triangle} again yields the result.
	\end{proof}
	
	\begin{lem}{\label{lem:upperbound'}} For any $f(x)=(x-\gamma)^d+c\in \overline{\Q}[x]$, and any $\alpha\in\overline{\Q}$, \[h(f^n(\alpha))<\frac{d^{n}}{d-1}\left(\textup{log}2+h(c-\gamma)\right)+d^nh(\alpha-\gamma)+h(\gamma)+\textup{log}2.\]
		
	\end{lem}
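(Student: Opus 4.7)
The plan is to follow the same conjugation strategy used in the proofs of Lemmas \ref{lem:mincan2'} and \ref{lem:lb'}, reducing the statement to the $\gamma = 0$ case already handled in Lemma \ref{lem:upperbound}. Specifically, I introduce $\tilde{f}(x) = x^d + (c - \gamma)$ and observe the conjugation identity
\[
f^n(\alpha) = \tilde{f}^n(\alpha - \gamma) + \gamma,
\]
which follows by induction: translating the variable by $\gamma$ carries $f$ to $\tilde{f}$, so the same translation carries each iterate of $f$ to the corresponding iterate of $\tilde{f}$.

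From Lemma \ref{lem:upperbound} applied to $\tilde{f}$ and the point $\alpha - \gamma$, we obtain
\[
h\bigl(\tilde{f}^n(\alpha - \gamma)\bigr) < \frac{d^n}{d-1}\bigl(\textup{log}2 + h(c - \gamma)\bigr) + d^n h(\alpha - \gamma).
\]
Then I apply Lemma \ref{lem:triangle} to the sum $f^n(\alpha) = \tilde{f}^n(\alpha - \gamma) + \gamma$ (so $n = 2$ in that lemma, contributing the $\textup{log}2$ term), giving
\[
h(f^n(\alpha)) \le h\bigl(\tilde{f}^n(\alpha - \gamma)\bigr) + h(\gamma) + \textup{log}2.
\]
Combining these two inequalities yields exactly the claimed bound.

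There is no real obstacle here: the proof is a direct analog of the preceding two lemmas, with the only bookkeeping being the extra $h(\gamma) + \textup{log}2$ error introduced by the triangle-inequality step when translating back from $\tilde{f}$ to $f$. The strict inequality is preserved because the bound on $h(\tilde{f}^n(\alpha - \gamma))$ is strict.
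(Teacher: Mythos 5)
Your proof is correct and matches the paper's argument exactly: both conjugate by the translation $x\mapsto x-\gamma$ to reduce to $\tilde{f}(x)=x^d+(c-\gamma)$, apply Lemma \ref{lem:upperbound}, and then use Lemma \ref{lem:triangle} on $f^n(\alpha)=\tilde{f}^n(\alpha-\gamma)+\gamma$ to pick up the $h(\gamma)+\log 2$ term. No issues.
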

	
	\begin{proof} Lemma \ref{lem:upperbound} implies \[h(\tilde{f}^n(\alpha-\gamma))< \fr{d^n}{d-1}(\log2+h(c-\gamma))+d^nh(\alpha-\gamma).\] By Lemma \ref{lem:triangle}, we obtain the desired result.
	\end{proof}
	
	\section{The $abc$-Conjecture and primitive prime divisors}{\label{section:abc}}
	
	In this section, we introduce the $abc$-Conjecture for number fields, and use it to prove Proposition \ref{prop:rad}. We define several pieces of notation. Recalling our definition of $h(\alpha)$ for $\alpha\in K$ from (\ref{eqn:height}), we extend this definition to an $n$-tuple. For $(z_1,\dots,z_n)\in K^n\backslash \{(0,\dots,0)\}$ with $n\ge 2$, let \[h(z_1,\dots,z_n)=\sum_{\textup{primes }\mathfrak{p} \textup{ of } \mathcal{O}_K} \min\{v_{\mathfrak{p}}(z_1),\dots,v_{\mathfrak{p}}(z_n)\}N_{\mathfrak{p}}+\dfrac{1}{[K:\QQ]} \sum_{\sigma:K\hookrightarrow\CC} \max\{\textup{log}|\sigma(z_1)|,\dots,\textup{log}|\sigma(z_n)|\}.\] Note that this definition of the height agrees with the one given in (\ref{eqn:height}) if we write $\alpha\in K$ in projective coordinates as $(\alpha,1)$. For any $(z_1,\dots,z_n)\in (K^*)^n$, $n\ge 2$, we define \[I(z_1,\dots,z_n)=\{\textup{primes }\mathfrak{p} \textup{ of } \mathcal{O}_K\mid v_{\mathfrak{p}}(z_i)\ne v_{\mathfrak{p}}(z_j)\textup{ for some } 1\le i,j\le n\}\] and let \[\textup{rad}(z_1,\dots,z_n)=\sum_{\mathfrak{p}\in I(z_1,\dots,z_n)} N_{\mathfrak{p}}.\] The $abc$-Conjecture for a number field $K$ is as follows.
	
	\begin{conj} For any $\epsilon>0$, there exists a constant $C_{K,\epsilon}>0$ such that for all $a,b,c\in K^*$ satisfying $a+b=c$, we have \[h(a,b,c)< (1+\epsilon)(\textup{rad}(a,b,c))+C_{K,\epsilon}.\]
		
	\end{conj}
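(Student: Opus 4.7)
The displayed statement is the number-field version of the Masser--Oesterl\'e $abc$-Conjecture; it is a famous open problem, invoked as a hypothesis for Theorem \ref{thm:main} rather than something to be proved here. The only generally pursued route to a proof is via Vojta's height conjecture, which is known to imply this statement. I outline how one would carry out that reduction.

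The plan is to apply Vojta's conjecture to the pair $(\Pone_K, D)$ with $D = [0] + [1] + [\infty]$. Given $a, b, c \in K^*$ with $a + b = c$, I would introduce the parameter $t = -a/b$, so that $1 - t = c/b$, thereby packaging the triple into a single point $t \in \Pone(K) \setminus \{0, 1, \infty\}$. Two comparisons are then needed. First, $h(a, b, c) = h(t) + O(1)$: the upper bound is immediate from $|c|_v \le 2 \max\{|a|_v, |b|_v\}$ at the archimedean places combined with $v_\mathfrak{p}(c) \ge \min\{v_\mathfrak{p}(a), v_\mathfrak{p}(b)\}$ non-archimedeanly, while the lower bound follows from $h([a:b:c]) \ge h([a:b])$. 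Second, $\mathrm{rad}(a, b, c) = N_D^{(1)}(t) + O(1)$, where $N_D^{(1)}$ is the truncated counting function of the pullback of $D$: a prime $\mathfrak{p}$ lies in $I(a, b, c)$ precisely when $v_\mathfrak{p}(t) = v_\mathfrak{p}(a) - v_\mathfrak{p}(b)$ or $v_\mathfrak{p}(1 - t) = v_\mathfrak{p}(c) - v_\mathfrak{p}(b)$ is nonzero, which is exactly the condition that $t$ reduces to one of $\{0, 1, \infty\}$ modulo $\mathfrak{p}$. With these dictionaries in place, and using $\deg(K_{\Pone} + D) = -2 + 3 = 1$ to identify the height $h_{K_{\Pone} + D}(t)$ with $h(t) + O(1)$, Vojta's inequality $h_{K_{\Pone} + D}(t) \le (1 + \epsilon)\, N_D^{(1)}(t) + C_{K, \epsilon}$ transcribes directly into the desired $abc$-inequality.

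The main obstacle is, of course, Vojta's conjecture itself, which is essentially as hard as $abc$ and is wide open in this generality. Unconditional progress toward the $abc$ statement, such as the work of Stewart--Yu, yields only exponential-shape bounds on $h(a, b, c)$ in terms of $\mathrm{rad}(a, b, c)$, nowhere near the $(1 + \epsilon)$ form needed here; Mochizuki's proposed proof via inter-universal Teichm\"uller theory for the $\QQ$-case remains disputed in the community. For the purposes of this paper, therefore, the inequality can only be treated as a conjectural input, which is exactly how the author uses it.
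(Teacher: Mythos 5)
You are right that this statement is the $abc$-Conjecture for the number field $K$ itself, stated only as a hypothesis; the paper offers no proof and merely remarks (citing Vojta) that it follows from Vojta's Conjecture, which is exactly the reduction you sketch. So your treatment matches the paper's: the inequality is a conjectural input, not a result to be established.
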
 As in the introduction, let $P_d=\{f(x)=(x-\gamma)^d+c\in K[x]\mid c-\gamma\in\mathcal{O}_K, c\ne 0\}$. 
	
	\begin{prop}{\label{prop:rad}} Assume the $abc$-Conjecture for $K$. Fix $d\ge 2$, and let $\tau\ge 0$.  For every $\epsilon>0$, there exists some $N_1=N_1(d,\tau,K,\epsilon)$ such that for all $f(x)\in P_d$ with $\nu(f)\le\tau$, and for all $\alpha\in K$ with infinite forward orbit under $f$, \[\sum_{v_{\mathfrak{p}}(f^n(\alpha))>0} N_{\mathfrak{p}} > (d-1-\epsilon) h(f^{n-1}(\alpha))\] for all $n\ge N_1$. \end{prop}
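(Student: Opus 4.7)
My plan is to apply the $abc$-Conjecture to the identity $z^d + c = f^n(\alpha)$, where $z := f^{n-1}(\alpha) - \gamma$, and extract the desired lower bound on $S := \sum_{v_\mathfrak{p}(f^n(\alpha))>0} N_\mathfrak{p}$. For the left-hand side of $abc$, I would use standard projective-height inequalities to obtain $h(z^d, c, f^n(\alpha)) \ge h(z^d/c) \ge h(z^d) - h(c) = d\,h(z) - h(c)$. For the radical, decompose by cases on $v_\mathfrak{p}(c)$: when $v_\mathfrak{p}(c) = 0$, a prime $\mathfrak{p}$ lies in $I(z^d, c, f^n(\alpha))$ exactly when $v_\mathfrak{p}(z) \ne 0$ or $v_\mathfrak{p}(f^n(\alpha)) > 0$; the finitely many primes with $v_\mathfrak{p}(c) \ne 0$ contribute at most $\textup{rad}(c) \le 2h(c)$. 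This yields
\[\textup{rad}(z^d, c, f^n(\alpha)) \le \textup{rad}^+(z) + \textup{rad}^-(z) + S + 2h(c),\]
where $\textup{rad}^+(z) = \sum_{v_\mathfrak{p}(z) > 0} N_\mathfrak{p}$ and $\textup{rad}^-(z) = \sum_{v_\mathfrak{p}(z) < 0} N_\mathfrak{p}$.

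The heart of the argument is two sharp bounds on these rad terms. Since $z^d$ satisfies $v_\mathfrak{p}(z^d) \ge d$ whenever $v_\mathfrak{p}(z^d) > 0$, inequality (\ref{eqn:divisortoheight}) applied to $z^d$ gives $d\cdot \textup{rad}^+(z) \le h(z^d) = d\,h(z)$, so $\textup{rad}^+(z) \le h(z)$. For $\textup{rad}^-(z)$, the hypothesis $c - \gamma \in \mathcal{O}_K$ combined with the recursion $f^{k+1}(\alpha) - \gamma = (f^k(\alpha) - \gamma)^d + (c - \gamma)$ shows, by induction on $k$, that any prime $\mathfrak{p}$ satisfying $v_\mathfrak{p}(\alpha - \gamma) \ge 0$ also satisfies $v_\mathfrak{p}(f^k(\alpha) - \gamma) \ge 0$ for all $k$. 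Thus the support of the denominator of $z$ is contained in that of $\alpha - \gamma$, so $\textup{rad}^-(z) \le h(\alpha - \gamma)$.

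Substituting into the $abc$ inequality and rearranging gives $S \ge (d - 1 - O(\epsilon_1))\,h(z) - h(\alpha - \gamma) - 3h(c) - C_{K,\epsilon_1}$. Converting $h(z) \ge h(f^{n-1}(\alpha)) - h(\gamma) - \log 2$ via Lemma \ref{lem:triangle} and using $\nu(f) \le \tau$ to bound $h(\gamma), h(c) = O_\tau(\max\{1, h(c-\gamma)\})$ uniformly in $f$, it remains to absorb $h(c-\gamma)$ and $h(\alpha - \gamma)$ into an $\epsilon\,h(f^{n-1}(\alpha))$ error. For this I would apply a dichotomy: if $h(\alpha - \gamma) \le \frac{4}{d-1}\max\{1, h(c-\gamma)\}$, Lemma \ref{lem:mincan2'} yields $h(f^{n-1}(\alpha)) \ge c_0\,d^{n-1}\max\{1, h(c-\gamma)\} - O(1)$; otherwise Lemma \ref{lem:lb'} yields $h(f^{n-1}(\alpha)) \ge (d^{n-1}/2)\,h(\alpha - \gamma) - O(1)$. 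In either regime, taking $n \ge N_1(d, \tau, K, \epsilon)$ large enough that $d^{n-1}$ dominates every constant forces each error term to be at most $\epsilon\,h(f^{n-1}(\alpha))$, and the stated bound follows.

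The main obstacle is the uniformity in $\alpha$: since $h(\alpha - \gamma)$ is unbounded over $\alpha \in K$, it cannot be treated as a fixed constant, and one must exploit the exponential growth of $h(f^{n-1}(\alpha))$ to absorb it. The dichotomy above is essential because Lemmas \ref{lem:mincan2'} and \ref{lem:lb'} are each sharp only in one of the two regimes of $h(\alpha - \gamma)$ relative to $h(c - \gamma)$.
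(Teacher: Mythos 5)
Your proposal is correct and follows essentially the same route as the paper: apply the $abc$-Conjecture to the triple $(z^d,\,c,\,z^d+c)$ with $z=f^{n-1}(\alpha)-\gamma$, bound the radical by $\sum_{v_\mathfrak{p}(f^n(\alpha))>0}N_\mathfrak{p}$ plus $h(z)$, the denominator contribution, and $O(h(c))$, and then absorb every lower-order term via the same dichotomy on $h(\alpha-\gamma)$ versus $\frac{4}{d-1}\max\{1,h(c-\gamma)\}$ using Lemmas \ref{lem:mincan2'} and \ref{lem:lb'}. The one point you leave implicit --- that $f^{n-1}(\alpha)\ne\gamma$ and $f^n(\alpha)\ne 0$ for $n\ge N_1$, which is needed so that $abc$ applies to elements of $K^*$ --- follows immediately from the same height lower bounds, exactly as the paper remarks.
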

	
	\begin{proof} Let $f(x)=(x-\gamma)^d+c\in P_d$ with $\nu(f)\le\tau$, and let $\alpha\in K$ have infinite forward orbit under $f$. As $c\ne 0$, the $abc$-Conjecture for $K$ implies that for any given $\epsilon_1>0$, \begin{equation}{\label{abcline1}}(1-\epsilon_1)h((x-\gamma)^d+c,(x-\gamma)^d,c)\le \sum_{\mathfrak{p}\in I(c,(x-\gamma)^d+c,(x-\gamma)^d)} N_{\mathfrak{p}}\end{equation} if $x\in K$, $x-\gamma,(x-\gamma)^d+c\in K^*$, and the left-hand side is sufficiently large. We would like to show that there exists an $N_{\tau,\epsilon_1}$ such that \begin{equation}{\label{eqn:onetermheight}}(1-\epsilon_1)h((x-\gamma)^d+c)\le\sum_{\mathfrak{p}\in I(c,(x-\gamma)^d+c,(x-\gamma)^d)} N_{\mathfrak{p}}\end{equation} when $x=f^{n-1}(\alpha)$ and $n\ge N_{\tau,\epsilon_1}$. To do this, we first claim that for any $\epsilon_2>0$, there exists an $N_{\tau,\epsilon_2}$ depending only on $\tau,\epsilon_2,d$, and $K$ such that \begin{equation}{\label{eqn:c}}\max\{1,h(c)\}\le\epsilon_2h((x-\gamma)^d+c),\end{equation} \begin{equation}{\label{eqn:gamma}}h(\gamma)\le\epsilon_2h((x-\gamma)^d+c),\end{equation} and \begin{equation}{\label{eqn:alpha}}h(\alpha)\le\epsilon_2h((x-\gamma)^d+c)\end{equation} if $x=f^{n-1}(\alpha)$ and $n\ge N_{\tau,\epsilon_2}$. (We will actually only be using (\ref{eqn:c}) in proving (\ref{eqn:onetermheight}), and will use (\ref{eqn:gamma}) and (\ref{eqn:alpha}) later in the proof.) By Lemma \ref{lem:mincan2'}, \begin{equation}{\label{abcline4}}h(f^n(\alpha))\ge \kappa d^n\max\{1,h(c-\gamma)\}-\fr{1}{d-1}(h(c-\gamma)+\log2)-h(\gamma)-\log2.\end{equation} There are two cases:
		\begin{enumerate} \item $h(\alpha-\gamma)\ge \fr{4}{d-1}\max\{1,h(c-\gamma)\}$
			\item $h(\alpha-\gamma)< \fr{4}{d-1}\max\{1,h(c-\gamma)\}$.
		\end{enumerate} In Case (i), we have from (\ref{abcline5}) that \begin{equation}{\label{abcline10}}h(f^n(\alpha))\ge \fr{d^n}{2}h(\alpha-\gamma)-\fr{\tau}{2}h(\alpha-\gamma)-\log2.\end{equation} Moreover, \begin{equation}{\label{abcline7}}\begin{split}h(c)\le h(\gamma)+h(c-\gamma)+\log2 & \le \tau\max\{1,h(c-\gamma)\}+h(c-\gamma)+\log2\\ & \le \fr{(\tau+1)(d-1)}{4}h(\alpha-\gamma)+\log2.\end{split}\end{equation} Comparing (\ref{abcline10}) and (\ref{abcline7}) completes the proof of (\ref{eqn:c}) in Case (i).
		
		In Case (ii), one obtains from (\ref{abcline4}) that \begin{equation}{\label{abcline9}}\begin{split}h(f^n(\alpha))& \ge \kappa d^n\max\{1,h(c-\gamma)\}-\fr{1}{d-1}(h(c-\gamma)+\log2)-\tau\max\{1,h(c-\gamma)\}-\log2 \\ & >\kappa \left(d^n-\fr{2}{d-1}\right)\max\{1,h(c-\gamma)\}-\tau\max\{1,h(c-\gamma)\}-\log2,\end{split}\end{equation} where the second inequality follows from the fact that \[\log2+h(c-\gamma)<2\max\{1,h(c-\gamma)\}.\] Combining the first line of (\ref{abcline7}) with the second line of (\ref{abcline9}) proves the claim in (\ref{eqn:c}).  A similar proof holds for (\ref{eqn:gamma}) and (\ref{eqn:alpha}). 
		
		In projective coordinates, \[((x-\gamma)^d+c,c,(x-\gamma)^d)=\left(\fr{(x-\gamma)^d+c}{c},1,\fr{(x-\gamma)^d}{c}\right)\] for any $x\in K$. We now compare the height of this point in $\mathbb{P}^2(K)$ to that of $((x-\gamma)^d+c,1,1)\in\mathbb{P}^2(K)$. One checks by the triangle inequality that for any $x\in K$ and any complex embedding $\sigma: K\hookrightarrow\CC$, \[\log\max\{|\sigma((x-\gamma)^d+c)|,1\}-\log\max\left\{\left|\sigma\left(\dfrac{(x-\gamma)^d+c}{c}\right)\right|,1,\left|\sigma\left(\dfrac{(x-\gamma)^d}{c}\right)\right|\right\}\le\log\max\{1,|\sigma(c)|\},\] and for any finite prime $\mathfrak{p}$ of $\mathcal{O}_K$, \[\min\{v_\mathfrak{p}((x-\gamma)^d+c)),0\}-\min\left\{v_\mathfrak{p}\left(\dfrac{(x-\gamma)^d+c}{c}\right),0,v_\mathfrak{p}\left(\dfrac{(x-\gamma)^d}{c}\right)\right\}\le \max\{0,v_\mathfrak{p}(c)\}.\] Combining these two inequalities and summing over all places, we obtain:
		
		\[h((x-\gamma)^d+c,1,1)-h\left(\fr{(x-\gamma)^d+c}{c},1,\fr{(x-\gamma)^d}{c}\right)\le 2h(c).\] Applying (\ref{eqn:c}), therefore, we see from (\ref{abcline1}), (\ref{abcline10}), and (\ref{abcline9}) that for any $\epsilon_1>0$, there exists an $N_{\tau,\epsilon_1}$ such that \begin{equation*}(1-\epsilon_1)h((x-\gamma)^d+c)\le\sum_{\mathfrak{p}\in I(c,(x-\gamma)^d+c,(x-\gamma)^d)} N_{\mathfrak{p}}\end{equation*} when $x=f^{n-1}(\alpha)$ and $n\ge N_{\tau,\epsilon_1}$. (By (\ref{eqn:c}), if $x=f^{n-1}(\alpha)$ and $n\ge N_{\tau,\epsilon_1}$, then $x-\gamma\ne 0$ and $(x-\gamma)^d+c\ne 0$, as required in order for (\ref{abcline1}) to hold.) 
		
		As $c-\gamma\in\mathcal{O}_K$, the only primes $\mathfrak{p}$ such that $v_{\mathfrak{p}}(f^n(\alpha))<0$ are those such that $v_{\mathfrak{p}}(\alpha)<0$ or $v_\mathfrak{p}(c)<0$. Let $x=f^{n-1}(\alpha)$. We have  \begin{equation}{\label{abcline2}}\begin{split}\sum_{\mathfrak{p}\in I(c,(x-\gamma)^d+c,(x-\gamma)^d)} N_{\mathfrak{p}} &\le \sum_{v_{\mathfrak{p}}(c(x-\gamma)^d(x-\gamma)^d+c))>0} N_{\mathfrak{p}}+h(\alpha)+h(c) \\ & \le \sum_{v_{\mathfrak{p}}(c)>0} N_{\mathfrak{p}}+\sum_{v_{\mathfrak{p}}((x-\gamma)^d)>0} N_{\mathfrak{p}} + \sum_{v_{\mathfrak{p}}((x-\gamma)^d+c)>0} N_{\mathfrak{p}}+h(\alpha)+h(c) \\ &
		\le 2h(c)+h(x-\gamma)+h(\alpha)+\sum_{v_{\mathfrak{p}}((x-\gamma)^d+c)>0} N_{\mathfrak{p}}\end{split}\end{equation} when $n\ge N_{\tau,\epsilon_1}$; here the third inequality follows from (\ref{eqn:divisortoheight}). We also have \[h((x-\gamma)^d+c)\ge |dh(x-\gamma)-h(c)|-\textup{log}(2).\] Therefore, (\ref{eqn:onetermheight}) and (\ref{abcline2}) yield \[\sum_{v_{\mathfrak{p}}(f^n(\alpha))>0} N_{\mathfrak{p}} > (1-\epsilon_1)(dh(f^{n-1}(\alpha)-\gamma)-h(c)-\textup{log}2)-2h(c)-h(\alpha)-h(f^{n-1}(\alpha)-\gamma)\] for all $n\ge N_{\tau,\epsilon_1}$. Using Lemma \ref{lem:triangle}, this implies \begin{equation}{\label{abcline12}}\begin{split}\sum_{v_{\mathfrak{p}}(f^n(\alpha))>0} N_{\mathfrak{p}} & > d(1-\epsilon_1)(h(f^{n-1}(\alpha)-h(\gamma)-\log2)-h(f^{n-1}(\alpha))-h(\gamma)-\log2\\&-(1-\epsilon_1)(h(c)-\log2)-2h(c)-h(\alpha) \\ & = (d-d\epsilon_1-1)h(f^{n-1}(\alpha))-(d-d\epsilon_1+1)(h(\gamma)+\log2)\\ &-(1-\epsilon_1)(h(c)-\log2)-2h(c)-h(\alpha)\end{split}\end{equation} for all $n\ge N_{\tau,\epsilon_1}$. From (\ref{eqn:c}), (\ref{eqn:gamma}), and (\ref{eqn:alpha}), it follows that for any $\epsilon_3>0$, there exists an $N_{\tau, \epsilon_3}$ depending only on $\tau,\epsilon_3,d$ and $K$ such that \begin{equation}{\label{abcline13}}(d-d\epsilon_1+1)(h(\gamma)+\log2)-(1-\epsilon_1)(h(c)-\log2)-2h(c)-h(\alpha)\le \epsilon_3h(f^{n-1}(\alpha))\end{equation} for all $n\ge N_{\tau,\epsilon_3}$. Choosing $\epsilon_1$ and $\epsilon_3$ such that $d\epsilon_1+\epsilon_3<\epsilon$, and taking $N_1=\max\{N_{\tau,\epsilon_1}, N_{\tau,\epsilon_3}\}$, (\ref{abcline12}) and (\ref{abcline13}) together imply the statement of Proposition \ref{prop:rad}.\end{proof} The proof of the next proposition is similar to that of Proposition 5.1 in \cite{GNT}. \begin{prop}{\label{prop:imprimitive}} Let $d\ge 2$, and let $\delta>0$. There exists an $N_{\tau,\delta}$ such that for all $f(x)\in P_d$ with $\nu(f)\le\tau$, and all $\alpha\in K$ having infinite forward orbit under $f$, \[\sum_{\mathfrak{p}\in Z}N_{\mathfrak{p}}\le \delta h(f^n(\alpha))\] for all $n\ge N_{\tau,\delta}$, where $Z$ is the set of finite primes $\mathfrak{p}$ in $\mathcal{O}_K$ such that $v_\mathfrak{p}(f^n(\alpha))>0$, $v_{\mathfrak{p}}(f^i(\alpha))>0$ and $f^i(\alpha)\ne 0$ for some $i<n$.
		
	\end{prop}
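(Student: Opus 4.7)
The plan is to follow the template of Proposition~5.1 of \cite{GNT}. Split $Z = Z_1 \sqcup Z_2$, where $Z_1 = \{\mathfrak{p} \in Z : v_{\mathfrak{p}}(\gamma) < 0\}$ (equivalently $v_{\mathfrak{p}}(c) < 0$, since $c-\gamma \in \mathcal{O}_K$). By (\ref{eqn:divisortoheight}) and the analogues of (\ref{eqn:c})--(\ref{eqn:gamma}) proved in Proposition~\ref{prop:rad}, $\sum_{\mathfrak{p} \in Z_1} N_{\mathfrak{p}} \le h(\gamma) + h(c) \le \delta\,h(f^n(\alpha))/2$ once $n$ is large enough in terms of $\tau, \delta, d, K$.

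For $\mathfrak{p} \in Z_2$, $f$ has good reduction at $\mathfrak{p}$. Let $i_{\mathfrak{p}}$ be the smallest $i < n$ with $v_{\mathfrak{p}}(f^{i}(\alpha)) > 0$ and $f^{i}(\alpha) \ne 0$. Iterating the Taylor congruence $f(y + \pi) \equiv f(y) \pmod{\pi^{v_{\mathfrak{p}}(\pi)}}$ along the orbit yields
\[
v_{\mathfrak{p}}\bigl(f^n(\alpha) - f^{n-i_{\mathfrak{p}}}(0)\bigr) \;\ge\; v_{\mathfrak{p}}(f^{i_{\mathfrak{p}}}(\alpha)) \;>\; 0,
\]
so the ultrametric inequality, combined with $v_{\mathfrak{p}}(f^n(\alpha)) > 0$, forces $v_{\mathfrak{p}}(f^{n-i_{\mathfrak{p}}}(0)) > 0$. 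Consequently,
\[
\sum_{\mathfrak{p} \in Z_2} N_{\mathfrak{p}} \;\le\; \sum_{j=1}^{n-1}\;\sum_{\substack{v_{\mathfrak{p}}(f^n(\alpha))>0 \\ v_{\mathfrak{p}}(f^j(0))>0}} N_{\mathfrak{p}}.
\]
If $0$ has finite forward orbit under $f$, then $\{f^{j}(0)\}_{j\ge 1}$ is a finite set, the right-hand side is $O(1)$, and we conclude using the lower bound on $h(f^n(\alpha))$ from Lemma~\ref{lem:mincan2'}. Otherwise, Proposition~\ref{prop:rad} is available with $\alpha$ replaced by $0$, and the task reduces to establishing a dynamical common-divisor bound of the shape
\[
\sum_{\substack{v_{\mathfrak{p}}(f^n(\alpha))>0 \\ v_{\mathfrak{p}}(f^j(0))>0}} N_{\mathfrak{p}} \;\le\; \varepsilon\,d^{\,j} \qquad (n > j \ge j_0(\varepsilon,\tau,d,K)),
\]
derived by applying the $abc$-Conjecture to the triple $\bigl(f^n(\alpha),\,f^{n-j}(0),\,f^n(\alpha)-f^{n-j}(0)\bigr)$ in the pattern of Proposition~\ref{prop:rad}. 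Summing the geometric series gives $\sum_{\mathfrak{p}\in Z_2} N_\mathfrak{p} \le \varepsilon d^n/(d-1) \le \delta\,h(f^n(\alpha))/2$ after choosing $\varepsilon$ small in terms of $\delta, d$, and the constant $\kappa$ of Lemma~\ref{lem:mincan2'}.

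The principal obstacle is precisely this common-divisor estimate. Unlike in Proposition~\ref{prop:rad}, one must apply $abc$ to a configuration involving two distinct orbits of the same map, and must invoke the primitive-prime-divisor output of Proposition~\ref{prop:rad} (applied to the orbit of $0$) to ensure that prime ideals simultaneously dividing $f^n(\alpha)$ and $f^j(0)$ correspond to imprimitive primes in the orbit of $0$, whose total weight is controlled.
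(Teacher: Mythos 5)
Your argument has a genuine gap, and it sits exactly where you flag it: the ``dynamical common-divisor bound'' $\sum_{v_\mathfrak{p}(f^n(\alpha))>0,\,v_\mathfrak{p}(f^j(0))>0} N_\mathfrak{p}\le\varepsilon d^{\,j}$, uniformly in $n$, is never proved, and the suggested derivation does not in fact run ``in the pattern of Proposition \ref{prop:rad}.'' In that proposition the third member of the $abc$-triple is the constant $c$, whose height is negligible compared to $h((x-\gamma)^d+c)$ by (\ref{eqn:c}), and the term $(x-\gamma)^d$ is a $d$-th power, so its radical costs only $h(x-\gamma)$; those two features are what turn the $abc$ inequality into a nontrivial lower bound on the radical. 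For your triple $\bigl(f^n(\alpha),\,f^{n-j}(0),\,f^n(\alpha)-f^{n-j}(0)\bigr)$ none of the entries has small height or a power-saving radical, and the primes of the gcd divide all three entries, so $abc$ gives no control whatsoever on their contribution. Bounds of the shape you need are genuine dynamical GCD estimates, which are not known to follow from the one-variable $abc$-Conjecture; and the auxiliary idea of invoking Proposition \ref{prop:rad} for the orbit of $0$ to ``control the weight of imprimitive primes'' is circular, since that control is precisely an instance of the proposition you are proving (with $\alpha$ replaced by $0$), while Proposition \ref{prop:rad} only gives a lower bound on radicals. (Secondarily, the finite-orbit case is not literally $O(1)$: your right-hand side has $n-1$ terms and the bound must be uniform over $f$; this is repairable, but it is a symptom of decomposing over all $j\le n-1$.)

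The missing idea is much more elementary, and your setup is one step away from it. You correctly observe that a prime $\mathfrak{p}\in Z$ with witness $i<n$ satisfies $v_\mathfrak{p}(f^{n-i}(0))>0$ (your $Z_1$/$Z_2$ split handling $v_\mathfrak{p}(\gamma)<0$ is fine, and is a point the paper passes over silently). Now note that either $i\le n/2$ or $n-i\le n/2$, so every $\mathfrak{p}\in Z$ divides some element of $\{f^k(\alpha)\}_{k\le\lfloor n/2\rfloor}\cup\{f^k(0)\}_{k\le\lfloor n/2\rfloor}$. Applying (\ref{eqn:divisortoheight}) and Lemma \ref{lem:upperbound'} term by term gives $\sum_{\mathfrak{p}\in Z}N_\mathfrak{p}\le\sum_{k\le\lfloor n/2\rfloor}\bigl(h(f^k(\alpha))+h(f^k(0))\bigr)$, which is at most a constant (depending on $d,\tau$) times $d^{\lfloor n/2\rfloor+1}\max\{1,h(c-\gamma),h(\alpha-\gamma)\}$ plus $O(n)$; meanwhile the lower bounds (\ref{abcline10}) and (\ref{abcline9}) (i.e.\ Lemmas \ref{lem:mincan2'} and \ref{lem:lb'}, with the same two-case split on $h(\alpha-\gamma)$ used in Proposition \ref{prop:rad}) give $h(f^n(\alpha))$ at least a constant times $d^n\max\{1,h(c-\gamma),h(\alpha-\gamma)\}$ for $n$ large in terms of $\tau$. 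Since $d^{\lfloor n/2\rfloor+1}/d^n\to 0$, the desired inequality holds for all $n\ge N_{\tau,\delta}$. This is the paper's proof (following Proposition 5.1 of \cite{GNT}); it is unconditional, requires no gcd estimate, no $abc$, and no case distinction on whether $0$ has finite forward orbit.
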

	
	\begin{proof} Since $f^n(\alpha)=f^{n-k}(f^k(\alpha))$, observe that if $v_\mathfrak{p}(f^k(\alpha))>0$ and $v_\mathfrak{p}(f^n(\alpha))>0$ for some prime $\mathfrak{p}$ of $\mathcal{O}_K$, then $v_\mathfrak{p}(f^{n-k}(0))>0$. It follows that if $f^n(\alpha)\ne 0$, then \[\begin{split}\sum_{\mathfrak{p}\in Z} N_{\mathfrak{p}} & \le \sum_{k=0}^{\lfloor n/2\rfloor} h(f^k(\alpha))+h(f^k(0)) \\ & < \sum_{k=0}^{\lfloor n/2\rfloor} h(\gamma)+\log2+\fr{d^k}{d-1}(\log2+h(c-\gamma))+d^kh(\alpha-\gamma) \\ & +\sum_{k=0}^{\lfloor n/2\rfloor}h(\gamma)+\log2+ \fr{d^k}{d-1}(\log2+h(c-\gamma))+d^kh(\gamma) \\ & \le \sum_{k=0}^{\lfloor n/2\rfloor}2\log2+\fr{2d^k}{d-1}(\log2+h(c-\gamma))+(d^k+2)h(\gamma)+d^kh(\alpha-\gamma)\\ & \le 2d^{\lfloor n/2\rfloor+1}(\log2+h(c-\gamma))+d^{\lfloor n/2\rfloor+1}(h(\gamma)+h(\alpha-\gamma))+(\lfloor n/2\rfloor+1)(2\log2) \\ & < 4d^{\lfloor n/2\rfloor+1}\max\{1,h(c-\gamma)\}+d^{\lfloor n/2\rfloor+1}(\tau\max\{1,h(c-\gamma)\}+h(\alpha-\gamma))+2\log2(\lfloor n/2\rfloor+1),\end{split}\] where the first inequality follows from (\ref{eqn:divisortoheight}) and the second inequality follows from Lemma \ref{lem:upperbound'}.  As in the proof of Proposition \ref{prop:rad}, we divide the proof into two cases:
		\begin{enumerate}
			\item $h(\alpha-\gamma)\ge\fr{4}{d-1}\max\{1,h(c-\gamma)\}$
			\item  $h(\alpha-\gamma)<\fr{4}{d-1}\max\{1,h(c-\gamma)\}$.
		\end{enumerate}
		In Case (i), we have \[\sum_{\mathfrak{p}\in Z} N_\mathfrak{p}\le (d-1)d^{\lfloor n/2\rfloor +1}h(\alpha-\gamma)+d^{\lfloor n/2\rfloor+1}h(\alpha-\gamma)\left(\fr{\tau(d-1)}{4}+1\right)+2\log2(\lfloor n/2\rfloor+1),\] assuming $f^n(\alpha)\ne 0$. The lower bound given in (\ref{abcline10}) then leads to the desired result in Case (i). In Case (ii), the upper bound on $\sum_{\mathfrak{p}\in Z}N_\mathfrak{p}$ given above, combined with (\ref{abcline9}), completes the proof.\end{proof}
	
	\section{Height Uniformity Conjecture applied to the quadratic case}{\label{section:quadratic}}
	
	In this section, we use a Height Uniformity Conjecture to address the case where $f$ is quadratic. The version of the Height Uniformity Conjecture we cite follows from Vojta's Conjecture \cite{Ih1}. For a version involving integral points on curves of genus at least one, see \cite{Ih2}.
	
	Let $d\ge 2$, let $K$ be a fixed number field, and let $P_d=\{f(x)=(x-\gamma)^d+c\in K[x]\mid c-\gamma\in\mathcal{O}_K, c\ne 0\}$. For $f(x)=a_dx^d+a_{d-1}x^{d-1}+\dots+a_1x+a_0\in\overline{\Q}[x]$, let $h(f)=\max\{h(a_i)\}_{0\le i\le d}$. (Note that this is somewhat different from the usual definition.) We first prove three lemmas that will be needed to prove Theorem \ref{thm:main} when $f$ is quadratic.
	
	\begin{lem}{\label{lem:heightgrowth}}
		Fix $d\ge 2$, and $i\ge 1$. For every $\tau\ge 0$ and every $C_1,C_2\in\mathbb{R}$, there exists an $N_{\tau,1}$ such that for all $f(x)\in P_d$ with $\nu(f)\le\tau$, \begin{equation}{\label{eqn:lemheightgrowth}}h(f^n(\alpha))>C_1h(f^i)+C_2\end{equation} for all $n\ge N_{\tau,1}$ whenever $\alpha\in K$ has infinite forward orbit under $f$.
		
	\end{lem}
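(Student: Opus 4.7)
The plan is to control the two sides of \eqref{eqn:lemheightgrowth} separately: bound $h(f^i)$ from above purely in terms of $\max\{1,h(c-\gamma)\}$ with constants depending only on $d,i,\tau$, while using Lemma \ref{lem:mincan2'} to obtain a lower bound on $h(f^n(\alpha))$ whose leading term is $\kappa d^n\max\{1,h(c-\gamma)\}$. Once both are in hand, the exponential factor $\kappa d^n$ will dominate for $n$ sufficiently large, uniformly in $f$ and in $\alpha$.

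For the upper bound, I would first expand $f(x)=\sum_{j=0}^{d}\binom{d}{j}(-\gamma)^{d-j}x^j+c$ and apply Lemma \ref{lem:triangle} coefficient by coefficient to obtain $h(f)\le d\,h(\gamma)+h(c)+O_d(1)$, where $h(f)$ denotes the maximum height of a coefficient. Iterating Lemma \ref{lem:triangle} on the composition $f\circ f^{i-1}$ then yields a constant $C_{d,i}$ depending only on $d$ and $i$ with $h(f^i)\le C_{d,i}(1+h(f))$. Using $\nu(f)\le\tau$, i.e., $h(\gamma)\le\tau\max\{1,h(c-\gamma)\}$, together with $h(c)\le h(\gamma)+h(c-\gamma)+\log 2$, one obtains
\[h(f^i)\le A_{d,i,\tau}\max\{1,h(c-\gamma)\}\]
for some constant $A_{d,i,\tau}$ depending only on $d$, $i$, and $\tau$.

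For the lower bound, Lemma \ref{lem:mincan2'} combined with the same bound on $h(\gamma)$ gives
\[h(f^n(\alpha))\ge\left(\kappa d^n-\tfrac{1}{d-1}-\tau\right)\max\{1,h(c-\gamma)\}-\tfrac{\log 2}{d-1}-\log 2\]
for every $\alpha\in K$ with infinite forward orbit. Subtracting $C_1$ times the upper bound on $h(f^i)$ (without loss of generality $C_1\ge 0$, since if $C_1<0$ the statement follows immediately from Lemma \ref{lem:mincan2'} because $C_1h(f^i)\le 0$), one obtains
\[h(f^n(\alpha))-C_1h(f^i)\ge\left(\kappa d^n-\tfrac{1}{d-1}-\tau-C_1A_{d,i,\tau}\right)\max\{1,h(c-\gamma)\}-\tfrac{\log 2}{d-1}-\log 2.\]
Since $\max\{1,h(c-\gamma)\}\ge 1$, choosing $N_{\tau,1}$ so that the parenthesized coefficient exceeds $C_2+\tfrac{\log 2}{d-1}+\log 2$ for all $n\ge N_{\tau,1}$ completes the argument.

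There is no genuine obstacle here; the argument is essentially bookkeeping with the height bounds already established in the previous section. The only small subtlety is verifying that the constant $C_{d,i}$ in the bound $h(f^i)\le C_{d,i}(1+h(f))$ truly depends only on $d$ and $i$, which follows because each coefficient of $f^i$, regarded as a polynomial in the coefficients of $f$, has total degree and number of monomials bounded purely in terms of $d$ and $i$.
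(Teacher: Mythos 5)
Your argument is correct and essentially the same as the paper's: the paper also bounds $h(f^i)$ linearly in $\max\{1,h(\gamma),h(c)\}$ via Lemma \ref{lem:triangle} and compares this with the exponential lower bound on $h(f^n(\alpha))$, merely quoting the inequalities (\ref{eqn:c}) and (\ref{eqn:gamma}) from the proof of Proposition \ref{prop:rad} where you re-derive the needed comparison directly from Lemma \ref{lem:mincan2'} and $\nu(f)\le\tau$. The only (trivially fixable) quibble is that in the last step you should require the parenthesized coefficient to exceed $\max\bigl\{0,\,C_2+\tfrac{\log 2}{d-1}+\log 2\bigr\}$, so that multiplying by $\max\{1,h(c-\gamma)\}\ge 1$ preserves the inequality.
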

	
	\begin{proof}
		Let $f(x)\in P_d$ be such that $\nu(f)\le\tau$, and let $\alpha\in K$ have infinite forward orbit under $f$. By Lemma \ref{lem:triangle}, we have \begin{equation*}h(f^i)\le \kappa_1\max\{h(\gamma),h(c)\}+\kappa_2\end{equation*} for some constants $\kappa_1,\kappa_2$ that depend only on $i$ and $d$. Therefore, there exists a constant $\kappa_3(d,i)$ such that \begin{equation*}h(f^i)\le \kappa_3\max\{1,h(\gamma),h(c)\}.\end{equation*} Combining this with (\ref{eqn:c}) and (\ref{eqn:gamma}) finishes the proof.\end{proof}
	
	\begin{rmk}
		From (\ref{abcline10}) and (\ref{abcline9}), we see that for any given choices of $C_1$ and $C_2$, the $d^n$ needed for (\ref{eqn:lemheightgrowth}) to hold grows linearly in $\nu(f)$; thus, $N_{\tau,1}$ grows linearly in $\log^+\tau$. We will make use of this fact in the proof of Theorem \ref{thm:quadraticVojta}.
	\end{rmk}

	\begin{lem}{\label{lem:heightsqueeze}}
		Let $d=2$, and let $\epsilon>0$. For every $\tau\ge 0$, there exists an $N_{\tau,\epsilon}$ such that for all $f(x)\in P_2$ with $\nu(f)\le\tau$, \[(1-\epsilon)h(f^n(\alpha))\le 2h(f^{n-1}(\alpha))\le (1+\epsilon)h(f^n(\alpha))\] for all $n\ge N_{\tau,\epsilon}$ and all $\alpha\in K$ having infinite forward orbit under $f$.
	\end{lem}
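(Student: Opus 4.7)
The plan is to control $h(f^n(\alpha))/h(f^{n-1}(\alpha))$ by exploiting the fact that squaring is \emph{exact} on the height, so the only discrepancy from the ratio being equal to $2$ comes from the shift $\gamma$ and the added constant $c$, both of which will be negligible compared to $h(f^{n-1}(\alpha))$ for large $n$.

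First, since $d=2$, writing $f^n(\alpha)=(f^{n-1}(\alpha)-\gamma)^{2}+c$ and using $h(y^2)=2h(y)$ together with Lemma \ref{lem:triangle} applied to the identities
\[(f^{n-1}(\alpha)-\gamma)^{2}+c=f^n(\alpha)\quad\text{and}\quad(f^{n-1}(\alpha)-\gamma)^{2}=f^n(\alpha)-c,\]
I obtain
\[\bigl|h(f^n(\alpha))-2h(f^{n-1}(\alpha)-\gamma)\bigr|\le h(c)+\log 2.\]
Applying Lemma \ref{lem:triangle} once more to the identities $f^{n-1}(\alpha)-\gamma=f^{n-1}(\alpha)+(-\gamma)$ and $f^{n-1}(\alpha)=(f^{n-1}(\alpha)-\gamma)+\gamma$ gives $|h(f^{n-1}(\alpha)-\gamma)-h(f^{n-1}(\alpha))|\le h(\gamma)+\log 2$. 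Combining these yields the key estimate
\[\bigl|h(f^n(\alpha))-2h(f^{n-1}(\alpha))\bigr|\le h(c)+2h(\gamma)+3\log 2.\]

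Second, I would invoke inequalities (\ref{eqn:c}) and (\ref{eqn:gamma}) from the proof of Proposition \ref{prop:rad}, applied with $x=f^{n-2}(\alpha)$ so that $(x-\gamma)^{d}+c=f^{n-1}(\alpha)$. This shows that for any $\epsilon_2>0$ there exists $N_{\tau,\epsilon_2}$ such that $h(c),h(\gamma)\le\epsilon_2\, h(f^{n-1}(\alpha))$ whenever $n\ge N_{\tau,\epsilon_2}+1$. Moreover, Lemma \ref{lem:heightgrowth} guarantees $h(f^{n-1}(\alpha))\to\infty$ uniformly over $f\in P_2$ with $\nu(f)\le\tau$ and $\alpha\in K$ of infinite orbit, which lets me absorb the additive constant $3\log 2$ into $\epsilon_2\, h(f^{n-1}(\alpha))$ as well. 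Hence for any $\delta>0$, there exists $N$ (depending only on $\tau,\delta,K$) such that
\[\bigl|h(f^n(\alpha))-2h(f^{n-1}(\alpha))\bigr|\le\delta\, h(f^{n-1}(\alpha))\quad\text{for all }n\ge N.\]

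Finally, a short rearrangement converts the previous display into the claimed two-sided bound: if $\delta$ is chosen small enough in terms of $\epsilon$ (for instance $\delta<2\epsilon/(1+\epsilon)$ suffices for the upper half and $\delta<2\epsilon/(1-\epsilon)$ for the lower), then $(2-\delta)h(f^{n-1}(\alpha))\le h(f^n(\alpha))\le(2+\delta)h(f^{n-1}(\alpha))$ translates directly into $(1-\epsilon)h(f^n(\alpha))\le 2h(f^{n-1}(\alpha))\le(1+\epsilon)h(f^n(\alpha))$. There is no real obstacle here: the entire argument is driven by the exactness of the height under squaring, and the only nontrivial ingredients are the already-proven uniform relative-smallness of $h(\gamma),h(c)$ (from the proof of Proposition \ref{prop:rad}) and the uniform growth of $h(f^{n-1}(\alpha))$ (from Lemma \ref{lem:heightgrowth}), both of which require exactly the hypothesis $\nu(f)\le\tau$.
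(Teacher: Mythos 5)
Your argument is correct, and it differs from the paper's proof only in how the key height comparison is obtained. The paper cites Silverman's general estimate $h(g(x))=dh(x)+O_d(h(g)+1)$ (Theorem 3.11 of \cite{Silverman}, recorded as (\ref{eqn:heightsqueeze})) and then invokes Lemma \ref{lem:heightgrowth} with $B$ large to make the error term negligible; you instead derive an explicit version of that estimate for the specific map at hand, using $h(y^2)=2h(y)$ and two applications of Lemma \ref{lem:triangle} to get $\bigl|h(f^n(\alpha))-2h(f^{n-1}(\alpha))\bigr|\le h(c)+2h(\gamma)+3\log 2$, and then absorb the right-hand side via (\ref{eqn:c}) and (\ref{eqn:gamma}) applied with $x=f^{n-2}(\alpha)$ (which is legitimate, since those inequalities are proved without the $abc$-Conjecture and hold for $n-1\ge N_{\tau,\epsilon_2}$). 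The two routes are structurally the same --- main term $2h(f^{n-1}(\alpha))$ plus an error controlled by the coefficient heights, killed by the uniform growth of $h(f^{n-1}(\alpha))$ over $\nu(f)\le\tau$ --- but yours is self-contained with explicit constants, while the paper's is shorter by outsourcing the first step to a standard reference; both uses of uniformity rest on the same inequalities (\ref{eqn:c}) and (\ref{eqn:gamma}). Your final rearrangement with $\delta\le 2\epsilon/(1+\epsilon)$ for the upper bound and $\delta\le 2\epsilon/(1-\epsilon)$ for the lower is fine (and for $\epsilon\ge 1$ the lower inequality is vacuous), so there is no gap.
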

	
	\begin{proof}
		Let $f(x)\in P_2$, and let $\alpha\in K$ have infinite forward orbit under $f$. For all $x\in\overline{\mathbb{Q}}$, and all degree $d$ polynomials $g\in\overline{\mathbb{Q}}[x]$, we have from \cite[Theorem 3.11]{Silverman} that \begin{equation}{\label{eqn:heightsqueeze}}h(g(x))=dh(x)+O_d(h(g)+1).\end{equation} However, Lemma \ref{lem:heightgrowth} implies that for every $B>1$, there exists an $N_{\tau,B}$ (depending on $B$, $\tau$, and $K$) such that \[h(f^n(\alpha))\ge B(h(f)+1)\] whenever $n\ge N_{\tau,B}$. Letting $B$ be sufficiently large, we conclude from taking $d=2$ and $f=g$ in (\ref{eqn:heightsqueeze}) that \[(1-\epsilon)h(f^n(\alpha))\le 2h(f^{n-1}(\alpha))\le (1+\epsilon)h(f^n(\alpha))\] for all $n\ge N_{\tau,C}$.
	\end{proof} In order to prove Proposition \ref{prop:heightunif}, we also require the following lemma, which is similar to Lemma 2.2 of \cite{Hindes1}. For a number field $K$ and a finite set of primes $S$ of $\mathcal{O}_K$ containing the archimedean places, let $\mathcal{O}_{K,S}=\{\alpha\in K:v_\mathfrak{p}(\alpha)\ge0, \mathfrak{p}\notin S\}$, and let $\mathcal{O}_{K,S}^*$ denote the unit group of $\mathcal{O}_{K,S}$. \begin{lem}{\label{lem:decomposition}}
		Let $K$ be a number field, let $\alpha\in K$, and let $f(x)=(x-\gamma)^d+c\in K[x]$, with $d\ge2$. Let $l\ge2$, and let $S$ be the minimal set of primes of $\mathcal{O}_K$ such that $S$ contains the archimedean places, $S$ contains each finite prime $\mathfrak{p}$ of $\mathcal{O}_K$ where $\alpha$,$\gamma$, or $c$ has negative valuation, and $\mathcal{O}_{K,S}$ is a UFD. For every $n\ge 1$, there is a decomposition \begin{equation}{\label{eqn:decomposition}}f^n(\alpha)=u_nd_ny_n^l, \hspace{1mm}\text{ for some } y_n\in\mathcal{O}_{K,S},u_n\in\mathcal{O}_{K,S}^*,d_n\in\mathcal{O}_K\end{equation} satisfying the following properties:
		
		(1) $0\le v_\mathfrak{p}(d_n)\le l-1$ for all $\mathfrak{p}\notin S$
		
		(2) $0\le v_\mathfrak{p}(d_n)<h_K$ for all $\mathfrak{p}\in S$, where $h_K$ is the class number of $K$
		
		(3) The height $h(u_n)$ satisfies $h(u_n)\le C(l-1)^2(h(\alpha)+h(\gamma)+h(c))$ for all $n$, where $C$ is some constant depending only on $K$.
	\end{lem}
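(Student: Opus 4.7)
The plan is to build the decomposition in two stages (using the class group and the PID property of $\mathcal{O}_{K,S}$) and then bound $u_n$ using Dirichlet's $S$-unit theorem. First I would observe that $\alpha,\gamma,c\in\mathcal{O}_{K,S}$ by construction of $S$, hence $f^n(\alpha)\in\mathcal{O}_{K,S}$ for every $n$, since $\mathcal{O}_{K,S}$ is a ring.

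For each prime $\mathfrak{p}$ of $\mathcal{O}_K$ with $\mathfrak{p}\notin S$, Euclidean division gives $v_\mathfrak{p}(f^n(\alpha))=lq_\mathfrak{p}+r_\mathfrak{p}$ with $0\le r_\mathfrak{p}\le l-1$, and only finitely many $r_\mathfrak{p}$ are nonzero. Form the integral ideal $\mathfrak{d}_0=\prod_{\mathfrak{p}\notin S}\mathfrak{p}^{r_\mathfrak{p}}$ of $\mathcal{O}_K$. Because $\mathcal{O}_{K,S}$ is a PID, the surjection $\mathrm{Cl}(\mathcal{O}_K)\twoheadrightarrow\mathrm{Cl}(\mathcal{O}_{K,S})=0$ has kernel generated by the classes of finite primes in $S$, and since $\mathrm{Cl}(\mathcal{O}_K)$ has exponent dividing $h_K$, there exist exponents $0\le t_\mathfrak{p}<h_K$ at finite primes of $S$ making $\mathfrak{d}=\mathfrak{d}_0\prod_{\mathfrak{p}\in S,\,\text{finite}}\mathfrak{p}^{t_\mathfrak{p}}$ principal. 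Picking a generator $d_n\in\mathcal{O}_K$ immediately yields properties (1) and (2). Now $f^n(\alpha)/d_n$ has valuations divisible by $l$ at every prime outside $S$, so its extended ideal is an $l$-th power in the PID $\mathcal{O}_{K,S}$; choose $y_n\in\mathcal{O}_{K,S}$ with $(y_n)^l=(f^n(\alpha)/d_n)$ in $\mathcal{O}_{K,S}$, and set $u_n=f^n(\alpha)/(d_ny_n^l)\in\mathcal{O}_{K,S}^*$.

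The main obstacle is establishing (3), which demands that $h(u_n)$ be bounded \emph{independently of $n$}, even though $h(f^n(\alpha))$ grows like $d^n$. The remaining freedom in the construction is to replace $d_n$ by $\epsilon d_n$ for $\epsilon\in\mathcal{O}_K^*$ and $y_n$ by $\eta y_n$ for $\eta\in\mathcal{O}_{K,S}^*$, which rescales $u_n$ by $\epsilon^{-1}\eta^{-l}$. By Dirichlet's $S$-unit theorem the logarithmic embedding $\mathcal{O}_{K,S}^*\to\mathbb{R}^{|S|}$ has image a full-rank lattice in the sum-zero hyperplane, with fundamental domain of diameter controlled by a regulator $R_S$. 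Choosing $\eta$ so that $(\log|\eta y_n|_v)_{v\in S}$ approximates $\tfrac{1}{l}(\log|f^n(\alpha)/d_n|_v)_{v\in S}$ within a fundamental domain of the sublattice $l\cdot\log\mathcal{O}_{K,S}^*$ (and $\epsilon$ analogously using $\log\mathcal{O}_K^*$) forces each $|\log|u_n|_v|$ to be of size $O((l-1)R_S)$, so that summing over $v\in S$ gives $h(u_n)\le O_K(|S|\cdot(l-1)\cdot R_S)$. Both $|S|$ and $R_S$ are controlled by $K$ and by the set of places at which $\alpha,\gamma,c$ fail to be integral, whose log-norm contributions sum to at most $h(\alpha)+h(\gamma)+h(c)$ up to a $K$-dependent constant. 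One further factor of $(l-1)$ is absorbed from the residuals $r_\mathfrak{p}\le l-1$ at finite primes of $S$ (which cannot be made part of $d_n$), yielding the claimed bound $h(u_n)\le C(l-1)^2(h(\alpha)+h(\gamma)+h(c))$ with $C=C(K)$. The key payoff is that the entire $n$-dependence has been absorbed into $d_n y_n^l$, leaving $u_n$ uniformly bounded.
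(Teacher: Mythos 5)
Your construction of $d_n$, $y_n$, $u_n$ via the class group is correct and gives (1) and (2) essentially as in the paper (which works instead with prime elements of the UFD $\mathcal{O}_{K,S}$ and with generators of $\mathfrak{p}_i^{n_i}$, $n_i\le h_K$; the two devices are equivalent). The genuine gap is in (3). You bound $h(u_n)\ll_K |S|\,(l-1)\,R_S$ and assert that $|S|$ and $R_S$ are controlled by the primes where $\alpha,\gamma,c$ are non-integral, ``whose log-norm contributions sum to at most $h(\alpha)+h(\gamma)+h(c)$.'' But the $S$-regulator is a covolume: it scales like $R_Kh_K$ times the \emph{product} of $\log N_{K/\Q}(\mathfrak{p})$ over the finite primes of $S$, not like the sum $\sum_{\mathfrak{p}}\log N_{K/\Q}(\mathfrak{p})\le[K:\Q](h(\alpha)+h(\gamma)+h(c))$; moreover a covolume does not control the diameter of a fundamental domain without a Lehmer-type lower bound on heights of $S$-units, which you do not invoke. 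Even if you replace $R_S$ by the diameter of a fundamental domain built from a basis of small height (e.g.\ via Lenstra's Theorem 6.2, as the paper does), that diameter is only $\ll_K |S|\cdot(1+\max_\mathfrak{p}\log N_{K/\Q}(\mathfrak{p}))$, and since $|S|$ can itself be of size comparable to $h(\alpha)+h(\gamma)+h(c)$, your argument yields a bound that is quadratic, not linear, in $h(\alpha)+h(\gamma)+h(c)$. Linearity is not cosmetic: in Proposition \ref{prop:heightunif} the term $C(h(\alpha)+h(\gamma)+h(c))$ must be absorbed into $\epsilon_2 h(f^{n-3}(\alpha))$ with a threshold depending only on $\tau$, $d$, $K$; with a quadratic bound the required $n$ would grow like $\log\max\{1,h(c-\gamma)\}$, which is unbounded over $P_2$, and the uniformity that is the point of the theorem is lost. (Also, the ``extra factor of $l-1$ from the residuals $r_\mathfrak{p}\le l-1$ at finite primes of $S$'' does not correspond to anything in your construction: you perform no reduction mod $l$ at primes of $S$, and those valuations simply remain in $u_ny_n^l$.)

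The paper avoids this by bounding $u_n$ prime by prime rather than with one global fundamental domain: after absorbing $l$-th powers into $y_n$, it writes $u_n=q_1^{s_1}\cdots q_r^{s_r}$ with $0\le s_i\le l-1$, where $q_i$ is a prime element of $\mathcal{O}_{K,S'}$ ($S'$ the minimal set of primes making $\mathcal{O}_{K,S'}$ a UFD) lying in the prime $\mathfrak{q}_i\in S\setminus S'$, and then expands each $q_i$ in a basis of the $(S'\cup\{\mathfrak{q}_i\})$-units chosen by Lenstra's theorem so that every basis element has height $\ll_K\log N_{K/\Q}(\mathfrak{q}_i)$, with exponents again reducible mod $l$. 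Summing over $i$ gives $h(u_n)\ll_K(l-1)^2\sum_i\log N_{K/\Q}(\mathfrak{q}_i)\ll_K(l-1)^2(h(\alpha)+h(\gamma)+h(c))$: the per-prime treatment converts the max/product you get from a single $S$-unit lattice into the sum that the linear bound requires. To salvage your lattice-theoretic formulation you would have to work in the rank-one pieces attached to each $\mathfrak{q}_i$ separately, which is in effect the paper's argument.
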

	
	\begin{proof}
		Since $f^n(\alpha)\in\mathcal{O}_{K,S}$, we can write $f^n(\alpha)=u_nd_ny_n^l$, where $d_n,y_n\in\mathcal{O}_{K,S}$ and $u_n\in\mathcal{O}_{K,S}^*$. We can also assume that $0\le v_\mathfrak{p}(d_n)\le l-1$ for all $\mathfrak{p}\notin S$; we do this by writing \[d_n=p_1^{e_1}p_2^{e_2}\cdots p_s^{e_s}(p_1^{f_1}p_2^{f_2}\cdots p_s^{f_s})^l\] where the $p_i$ are primes in $\mathcal{O}_{K,S}$, and the $e_i,f_i$ are integers such that $v_{p_i}(d_n)=f_il+e_i$ and $0\le e_i<l$. Replacing $d_n$ with $(p_1^{e_1}p_2^{e_2}\cdots p_s^{e_s})$ and $y_n$ with $(y_np_1^{f_1}p_2^{f_2}\cdots p_s^{f_s})$, we can assume $0\le v_p(d_n)\le l-1$ for all $\mathfrak{p}\notin S$.
		
		Now let $\mathfrak{p}_i\in S$. There exists an $a_i\in\mathcal{O}_K$ and $h_K\ge n_i\ge1$ such that $\mathfrak{p}_i^{n_i}=(a_i)$. Therefore, writing $v_{\mathfrak{p}_i}(d_n)=f_in_i+r_i$ for some $0\le r_i<n_i$ and setting $d_n'=d_n/(\prod_i a_i^{f_i})$, we get $0\le v_\mathfrak{p}(d_n')=v_\mathfrak{p}(d_n)\le l-1$ for all $\mathfrak{p}\notin S$ and $v_{\mathfrak{p}_i}(d_n')=r_i<h_K$ for all $\mathfrak{p}_i\in S$. Substituting $d_n'$ for $d_n$ and $u_n(\prod_i a_i^{f_i})$ for $u_n$, conditions (1) and (2) are both met. Note also that now $d_n\in\mathcal{O}_K$, as required in (\ref{eqn:decomposition}).
		
		Turning to (3), let $S'$ be the minimal set of primes of $\mathcal{O}_K$ containing the archimedean places of $K$ such that $\mathcal{O}_{K,S'}$ is a UFD. Suppose $S$ contains some prime not in $S'$. Absorbing $l$-th powers into $y_n$, we can write \[u_n=q_1^{s_1}q_2^{s_2}\cdots q_r^{s_r},\] where the $q_i$ are pairwise non-associate prime elements of $\mathcal{O}_{K,S'}$ and $0\le s_i\le l-1$. Then each $q_i\in\mathfrak{q}_i$ for a unique prime ideal $\mathfrak{q}_i$ of $\mathcal{O}_K$ not in $S'$. Write \[q_i=v_1^{r_1}v_2^{r_2}\cdots v_t^{r_t}\] for some basis $\{v_j\}_{j=1}^t$ of $\mathcal{O}_{K,S'\cup\{\mathfrak{q}_i\}}$; we can assume $0\le r_i\le l-1$, by absorbing $l$-th powers into $y_n$. Let $\Delta_{K/\Q}$ denote the discriminant of $K$, let $r_K=\textup{rank}(\mathcal{O}_{K,S'}^*)$, and let $s$ be the number of complex places of $K$, with conjugate places identified. Let \[D=\fr{1}{2}\log|\Delta_{K/\Q}|+s\log(2/\pi),\] and let $m_{q_i}$ be the maximal norm of a finite place contained in $S'\cup\{\mathfrak{q}_i\}$. By Theorem 6.2 of \cite{Lenstra}, the basis $\{v_j\}_{j=1}^t$ can be chosen so that \[h(v_j)\le D+\log(m_{q_i})\] for all $j$. But we can bound the right-hand side from above by $B\log(N_{K/\Q}(\mathfrak{q}_i))$, where $B$ is some constant depending only on $K$. Since $\textup{rank}(\mathcal{O}_{K,S'\cup\{\mathfrak{q}_i\}}^*)=r_K+1$, this implies \[h(q_i)=h(v_1^{r_1}v_2^{r_2}\cdots v_t^{r_t})\le (l-1)B\log(N_{K/\Q}(\mathfrak{q}_i))(r_K+1).\] Hence \begin{equation*}\begin{split}h(u_n) & \le B(l-1)^2(r_K+1)\sum_{i=1}^r \log(N_{K/\Q}(\mathfrak{q}_i)) \\ & \le B(l-1)^2(r_K+1)[K:\Q]\sum_{\mathfrak{p}\in S\backslash S'} N_\mathfrak{p} \\ & \le B(l-1)^2(r_K+1)[K:\Q](h(\alpha)+h(\gamma)+h(c)) \end{split}\end{equation*} Taking $C=B(r_K+1)[K:\Q]$ completes the proof when $S-S'\ne\emptyset$. A similar argument proves the lemma when $S=S'$.

	\end{proof}
	\begin{definition*}
		For a given $n\ge1$, let $Y$ be the set of primes $\mathfrak{p}$ in $\mathcal{O}_K$ such that $v_{\mathfrak{p}}(f^n(\alpha))>0$, and let $Y_1$, $Y_2$ denote the subset of $Y$ consisting of multiplicity $1$ and $2$ divisors of $f^n(\alpha)$, respectively. Let $Y_{3+}$ denote the set of primes in $Y$ dividing $f^n(\alpha)$ to multiplicity at least $3$.\end{definition*} We now introduce the Height Uniformity Conjecture.
	
	\begin{conj}{\label{conj:htunif}} Let $K$ be a number field. For each $d\ge5$, there exist positive constants $C_1$ and $C_2$ such that for all $F\in K[x]$ of degree $d$ with $\textup{disc}(F)\ne0$, if $x,y\in K$ satisfy $y^2=F(x)$, then \[h(x)\le C_1h(F)+C_2.\]
	\end{conj}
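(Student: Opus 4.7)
The plan is to derive this bound from Vojta's Conjecture, following the strategy of Ih \cite{Ih1}. Fix $d\ge 5$ and parametrize degree-$d$ polynomials $F$ by the $(d+1)$-tuple of their coefficients, viewed as a point of $\mathbb{A}^{d+1}$. Let $U\subset\mathbb{A}^{d+1}$ be the open subscheme where $\textup{disc}(F)\ne 0$, and let $\pi:\mathcal{X}\to U$ denote the universal family whose fiber over $F$ is the smooth projective model of $y^2=F(x)$. For $d\ge 5$ each fiber has genus $g=\lfloor (d-1)/2\rfloor\ge 2$, so by Faltings each fiber contains only finitely many $K$-points; the content of the conjecture is to make this finiteness quantitatively uniform in $F$.

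First I would fix a smooth projective compactification $\overline{\pi}:\overline{\mathcal{X}}\to\overline{U}$ with boundary divisor $D$ supported over $\overline{U}\setminus U$ together with the ramification locus of $\pi$. The key geometric input is then to compare three divisor classes on $\overline{\mathcal{X}}$: the log-canonical class $K_{\overline{\mathcal{X}}}+D$, the pullback $\overline{\pi}^*A_U$ of an ample class from the base, and the hyperelliptic class $H$ arising from the double cover $\mathcal{X}\to\mathbb{P}^1_U$. Using adjunction together with the fact that the relative dualizing sheaf of a hyperelliptic curve of genus $g$ restricts to each fiber as $(g-1)$ times the hyperelliptic class, one decomposes $K_{\overline{\mathcal{X}}}+D$ as a multiple of $H$ plus a pullback of a divisor from $\overline{U}$, providing the bridge between fiberwise heights and heights of coefficient vectors.

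Next I would invoke Vojta's Conjecture for the variety $\overline{\mathcal{X}}$: for every $\epsilon>0$ there exist a proper Zariski-closed exceptional subset $Z_\epsilon\subset\overline{\mathcal{X}}$ and a constant $c_\epsilon$ so that
\[h_{K_{\overline{\mathcal{X}}}+D}(P)\le \epsilon\, h_A(P)+c_\epsilon\]
for all $P\in\overline{\mathcal{X}}(K)\setminus Z_\epsilon$, where $A$ is any fixed ample divisor. Translating via the divisor comparison above, for a point $P=(x,y)$ on the fiber over $F$ the left-hand side grows like $(g-1)h(x)$ up to $O(h(F))$, while $h_A(P)$ grows like $h(x)+h(F)$. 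Taking $\epsilon$ strictly less than $g-1$ (possible since $g\ge 2$) isolates the desired inequality $h(x)\le C_1 h(F)+C_2$. Points falling into the exceptional locus $Z_\epsilon$ are handled by noetherian induction on irreducible components, verifying at each stage that no component dominates $\overline{U}$ in a way that would produce unbounded heights in the fibers.

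The hard part will be twofold. First, even granting Vojta, the delicate bookkeeping of $Z_\epsilon$ uniformly across the whole family requires serious work: one must show that no component of $Z_\epsilon$ projecting surjectively to $\overline{U}$ absorbs infinitely many rational points per fiber. Second, producing a bound genuinely linear in $h(F)$ rather than $(1+\epsilon)h(F)$ requires exploiting the strict inequality $g-1>\epsilon$ carefully when chasing implied constants through the height comparison. The more fundamental obstacle, of course, is that Vojta's Conjecture is itself wide open—reinforcing the author's decision to take Conjecture \ref{conj:htunif} as a hypothesis rather than attempt to prove it.
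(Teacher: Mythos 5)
The statement you are addressing is not a theorem of the paper but the Height Uniformity Conjecture itself: the paper takes it as a hypothesis (in the $d=2$ case of Theorems \ref{thm:main}, \ref{thm:quadraticVojta}, and \ref{thm:galunif}) and cites Ih \cite{Ih1} only for the fact that it follows from Vojta's Conjecture. So there is no proof in the paper to compare against, and your proposal, being conditional on Vojta's Conjecture, is not a proof of the statement either; at best it is an outline of the implication Vojta $\Rightarrow$ height uniformity, which is precisely the content of the cited reference. As such an outline, your route matches Ih's: apply Vojta to a compactification of the total space of the universal family $y^2=F(x)$, compare the log-canonical class with the hyperelliptic class $H$ using $K_C\sim(g-1)H$ on fibers, note $g=\lfloor(d-1)/2\rfloor\ge2$ for $d\ge5$, and choose $\epsilon<g-1$.

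The genuine gap is that the two items you defer as ``the hard part'' are the entire substance of the implication and are not carried out. The divisor comparison must be established on the total space, uniformly over $\overline{U}$, not merely fiber by fiber, since the implied $O(h(F))$ terms are exactly what become the constants $C_1,C_2$; and the exceptional set $Z_\epsilon$, on which Vojta's inequality says nothing, must be shown not to contain components dominating the base that carry rational points of unbounded fiberwise height. Your appeal to ``noetherian induction on irreducible components'' is a placeholder rather than an argument: components of $Z_\epsilon$ surjecting onto $\overline{U}$ are precisely where the claimed bound could fail, and handling them is where \cite{Ih1} does its real work. Since the statement is a conjecture, the paper's stance --- assume it, and cite \cite{Ih1} for its derivation from Vojta's Conjecture --- is the appropriate one, and your sketch should be regarded as a reading guide to that reference rather than a proof.
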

	
	\begin{prop}{\label{prop:heightunif}} Let $K=\QQ$ or let $K$ be an imaginary quadratic field. Assume the $abc$-Conjecture for $K$ and the Height Uniformity Conjecture. Let $\tau\ge0$. Then for any sufficiently small $\epsilon>0$, there exists an $N=N(\tau,\epsilon,K)$ such that for all $f(x)\in P_2$ with $\nu(f)\le\tau$, \[\sum_{\mathfrak{p}\in Y_1}N_{\mathfrak{p}}>\epsilon h(f^n(\alpha))\] for all $\alpha\in K$ having infinite forward orbit under $f$, and for all $n\ge N$.
	\end{prop}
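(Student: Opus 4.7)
I would argue by contradiction: suppose there exist arbitrarily large $n$ with $\sum_{\mathfrak{p}\in Y_1}N_\mathfrak{p}\le\epsilon h(f^n(\alpha))$. The strategy is to apply Lemma \ref{lem:decomposition} with $l=2$ to write $f^n(\alpha)=u_n d_n y_n^2$, extract an upper bound on $h(d_n)$ from the contradiction hypothesis, and derive an incompatible lower bound on $h(d_n)$ by applying the Height Uniformity Conjecture to the hyperelliptic curve $Y^2=u_n d_n f^{n_0}(X)$ for a fixed $n_0\ge 3$.

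\textbf{Upper bound on $h(d_n)$.} The hypothesis that $K=\QQ$ or $K$ is imaginary quadratic enters here: since $\mathcal{O}_K^\times$ is finite, one has $h(\delta)=\sum_\mathfrak{p} v_\mathfrak{p}(\delta)N_\mathfrak{p}$ for every nonzero $\delta\in\mathcal{O}_K$. Combining this with the valuation bounds on $d_n$ from Lemma \ref{lem:decomposition}, and the observation that a prime $\mathfrak{p}\notin S$ divides $d_n$ exactly when $v_\mathfrak{p}(f^n(\alpha))$ is odd, I obtain
\[h(d_n)\le\sum_{\mathfrak{p}\in Y_1}N_\mathfrak{p}+\sum_{\mathfrak{p}\in Y_{3+}}N_\mathfrak{p}+(h_K-1)\sum_{\mathfrak{p}\in S}N_\mathfrak{p}.\]
By $(\ref{eqn:divisortoheight})$, $\sum_{\mathfrak{p}\in Y_{3+}}N_\mathfrak{p}\le\tfrac{1}{3}h(f^n(\alpha))$, and the $S$-contribution is $O(h(\alpha)+h(\gamma)+h(c))=o(h(f^n(\alpha)))$ for $n$ large by $(\ref{eqn:c}),(\ref{eqn:gamma}),(\ref{eqn:alpha})$ and Lemma \ref{lem:heightgrowth}. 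Under the contradiction hypothesis, this forces $h(d_n)\le(\tfrac{1}{3}+\epsilon+o(1))h(f^n(\alpha))$.

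\textbf{Lower bound via Height Uniformity.} Fix $n_0\ge 3$, set $k=n-n_0$, $X=f^k(\alpha)$, $Y=u_n d_n y_n$, and $F(X)=u_n d_n f^{n_0}(X)$. Then $Y^2=F(X)$ with $F\in K[X]$ of degree $2^{n_0}\ge 8$, and for $n_0$ chosen to avoid the thin set where $f^j(\gamma)=0$ for some $1\le j\le n_0$ one has $\mathrm{disc}(F)\ne 0$. Conjecture \ref{conj:htunif} then gives
\[h(f^k(\alpha))\le C_1 h(F)+C_2\le C_1\bigl(h(d_n)+h(u_n)+h(f^{n_0})\bigr)+C_2.\]
Lemma \ref{lem:upperbound'} applied to $f^n(\alpha)=f^{n_0}(f^k(\alpha))$ yields $h(f^k(\alpha))\ge(1/2^{n_0}-o(1))h(f^n(\alpha))$ for $n$ large, while $h(u_n)$ and $h(f^{n_0})$ are both $o(h(f^n(\alpha)))$ by Lemma \ref{lem:heightgrowth}. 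Rearranging gives $h(d_n)\ge\bigl(1/(C_1\cdot 2^{n_0})-o(1)\bigr)h(f^n(\alpha))$.

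\textbf{Combining, and the main obstacle.} Setting the two bounds against each other yields $1/(C_1\cdot 2^{n_0})\le\tfrac{1}{3}+\epsilon+o(1)$, which supplies the desired contradiction provided $n_0\ge 3$ can be chosen so that $C_1(2^{n_0})\cdot 2^{n_0}<3$. This numerical constraint on the Height Uniformity constants is the principal difficulty: Conjecture \ref{conj:htunif} as stated only produces \emph{some} $C_1(d)$ for each $d\ge 5$, so the argument genuinely relies on the strengthening flowing from Vojta's conjecture in which $C_1$ may be taken as small as desired at the cost of enlarging $C_2$ (or at least on the assertion that $C_1(d)\cdot d<3$ for some $d\ge 5$). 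Once this is granted, any $\epsilon<1/(C_1\cdot 2^{n_0})-\tfrac{1}{3}$ together with $n$ sufficiently large in terms of $\tau$, $K$, and $\epsilon$ produces the required bound $\sum_{\mathfrak{p}\in Y_1}N_\mathfrak{p}>\epsilon h(f^n(\alpha))$.
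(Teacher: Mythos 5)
Your first two steps (the $l=2$ decomposition $f^n(\alpha)=u_nd_ny_n^2$ from Lemma \ref{lem:decomposition}, and the application of Conjecture \ref{conj:htunif} to a point on $Y^2=u_nd_nf^{n_0}(X)$ to get a lower bound on $h(u_nd_n)$ of the form $\delta_1 h(f^{n-n_0}(\alpha))$) are exactly the paper's opening moves, but the way you try to close the argument has a genuine gap --- the one you flagged yourself. Your contradiction only materializes if the Height Uniformity constant satisfies a numerical constraint of the shape $C_1(2^{n_0})\cdot 2^{n_0}<3$, because the only control you have on $\sum_{\mathfrak{p}\in Y_{3+}}N_\mathfrak{p}$ is the trivial bound $\tfrac{1}{3}h(f^n(\alpha))$ coming from (\ref{eqn:divisortoheight}). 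Conjecture \ref{conj:htunif} gives no such smallness of $C_1$, and no strengthening of it is among the hypotheses, so as written the proof does not go through. Notice also that your argument never actually invokes the $abc$-Conjecture, even though it is assumed in the statement; that is precisely the missing ingredient.

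The paper closes the loop differently, and in a way that works for an arbitrary (possibly huge) $C_1$. From the Height Uniformity step one only keeps that $h(u_nd_n)\ge\delta_1 h(f^{n-3}(\alpha))$ for \emph{some} fixed $\delta_1>0$ (obtained via Northcott from $h(f^{n-3}(\alpha))\le 2C_1h(u_nd_n)+C_2$), whence, after discarding the $h(u_n)$ and $S$-contributions, $\sum_{\mathfrak{p}\in Y_1\cup Y_{3+}}N_\mathfrak{p}\ge\tfrac{\delta_1}{2}h(f^{n-3}(\alpha))$. The conclusion is only claimed for sufficiently small $\epsilon$ (of the order $\delta_1/216$), so there is no need for this lower bound to beat $\tfrac{1}{3}$. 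The remaining task --- passing from $Y_1\cup Y_{3+}$ to $Y_1$ alone --- is where $abc$ enters: Proposition \ref{prop:rad} together with Lemma \ref{lem:heightsqueeze} gives $(1-\epsilon)h(f^n(\alpha))\le 2\sum_{\mathfrak{p}\in Y}N_\mathfrak{p}$, and comparing this with $h(f^n(\alpha))\ge\sum_{Y_1}N_\mathfrak{p}+2\sum_{Y_2}N_\mathfrak{p}+3\sum_{Y_{3+}}N_\mathfrak{p}$ shows that multiplicity-$\ge3$ primes cannot carry a large share of the height unless the multiplicity-one primes carry a comparable share; concretely, in the case $\sum_{Y_{3+}}N_\mathfrak{p}\ge 54\epsilon\,h(f^{n-3}(\alpha))$ one extracts $\sum_{Y_1}N_\mathfrak{p}\ge\tfrac{1/2-3\epsilon}{1+\epsilon}\sum_{Y_{3+}}N_\mathfrak{p}$, and otherwise $\sum_{Y_1}N_\mathfrak{p}$ is already large. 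If you replace your trivial $\tfrac{1}{3}$-bound on $\sum_{Y_{3+}}N_\mathfrak{p}$ by this $abc$-based dichotomy, the unrealistic demand on $C_1$ disappears and your outline becomes essentially the paper's proof. (Two smaller points: a proof by contradiction over "arbitrarily large $n$" must be run uniformly in $f$ and $\alpha$ to yield the stated uniform $N(\tau,\epsilon,K)$, and the choice of $n_0$ should be handled so that $\textup{disc}(u_nd_nf^{n_0})\ne0$, as you noted.)
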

	\begin{proof} Let $f(x)\in P_2$ with $\nu(f)\le\tau$, and let $\alpha\in K$. Write $f^n(\alpha)=u_nd_ny_n^2$ as in (\ref{eqn:decomposition}) with $l=2$. Then for $n\ge 3$, $f^{n-3}(\alpha)$ is the $x$-coordinate of a $K$-rational point on the affine curve \[u_nd_nY^2=f^3(X),\] or equivalently, on the curve \[Y^2=u_nd_nf^3(X)\] over $K$. By the Height Uniformity Conjecture, we have \[h(x)\le C_1(h(u_nd_n)+h(f^3))+C_2\le 2C_1\max\{h(u_nd_n),h(f^3)\}+C_2.\] On the other hand, by Lemma \ref{lem:heightgrowth}, there exists an $N_{\tau,1}$ such that for all $f(x)\in P_d$ with $\nu(f)\le\tau$, \[h(f^{n-3}(\alpha))>2C_1h(f^3)+C_2\] for all $n\ge N_{\tau,1}$ whenever $\alpha$ has infinite forward orbit under $f$. Thus, when $n\ge N_{\tau,1}$, \[h(f^{n-3}(\alpha))\le 2C_1h(u_nd_n)+C_2,\] with $h(u_nd_n)>0$. From Northcott's Theorem, it follows that for all such $f(x)\in P_d$ and $\alpha\in K$, there is some $1>\delta_1>0$ such that \begin{equation}{\label{eqn:htunif}}h(f^{n-3}(\alpha))\le \fr{1}{\delta_1}h(u_nd_n)\end{equation} for all $n\ge N_{\tau,1}$. As $K=\QQ$ or $K$ is an imaginary quadratic field, and we can assume $d_n\in\mathcal{O}_K$ is nonzero, we have \[h(d_n)=\sum_{v_\mathfrak{p}(d_n)>0}v_\mathfrak{p}(d_n)N_\mathfrak{p}.\] From Lemma \ref{lem:decomposition}, therefore, one obtains: \begin{equation*}\begin{split}h(u_nd_n) \le h(u_n)+h(d_n)&\le  C(h(\alpha)+h(\gamma)+h(c))+\sum_{\substack{v_\mathfrak{p}(f^n(\alpha))\textup{ odd} \\ \mathfrak{p}\notin S}} N_\mathfrak{p}+h_K\sum_{\mathfrak{p}\textup{ finite in } S}N_\mathfrak{p} \\ & \le h_K\sum_{\mathfrak{p}\textup{ finite in }S'}N_\mathfrak{p}+(C+h_K)(h(\alpha)+h(\gamma)+h(c))+\sum_{v_\mathfrak{p}(f^n(\alpha))\textup{ odd}} N_\mathfrak{p}\end{split}\end{equation*} Let $\epsilon_2>0$ be such that $\epsilon_2<\delta_1/2$. By Lemma \ref{lem:heightgrowth}, and by (\ref{eqn:c}),(\ref{eqn:gamma}), and (\ref{eqn:alpha}), there exists an $N_{\tau,\epsilon_2}$ such that \[h_K\sum_{\mathfrak{p}\textup{ finite in }S'}N_\mathfrak{p}+(C+h_K)(h(\alpha)+h(\gamma)+h(c))\le\epsilon_2 h(f^{n-3}(\alpha))\] for all $n\ge N_{\tau,\epsilon_2}$. Applying (\ref{eqn:htunif}), we see that \[\sum_{\mathfrak{p}\in Y_1\cup Y_{3+}}N_\mathfrak{p}+\epsilon_2 h(f^{n-3}(\alpha))\ge\delta_1 h(f^{n-3}(\alpha))\] and hence by our choice of $\epsilon_2$, \begin{equation}{\label{eqn:oddchunk}}\sum_{\mathfrak{p}\in Y_1\cup Y_{3+}}N_\mathfrak{p}\ge\fr{\delta_1}{2}h(f^{n-3}(\alpha))\end{equation} for all $n\ge\max\{N_{\tau,1},N_{\tau,\epsilon_2}\}$. Choose a positive $\epsilon\le \delta_1/216$; we note for future use that for this choice of $\epsilon$, we automatically have \begin{equation}{\label{eqn:2/5}}\fr{1/2-3\epsilon}{1+\epsilon}>\fr{2}{5}\end{equation} since $\delta_1<1$. Then by (\ref{eqn:oddchunk}), for each $n\ge\max\{N_{\tau,1},N_{\tau,\epsilon_2}\}$, either \[(i)\hspace{3mm} \sum_{\mathfrak{p}\in Y_{3+}}N_\mathfrak{p}\ge54\epsilon h(f^{n-3}(\alpha))\] or \[(ii)\hspace{3mm}\sum_{\mathfrak{p}\in Y_1}N_\mathfrak{p}\ge54\epsilon h(f^{n-3}(\alpha)).\] Suppose that some $n\ge\max\{N_{\tau,1},N_{\tau,\epsilon_2}\}$ satisfies (i). Note that by Lemma \ref{lem:heightsqueeze}, there exists an $N_{\tau,2}$ such that \[h(f^{n-3}(\alpha))> \fr{1}{9} h(f^n(\alpha))\] if $n\ge N_{\tau,2}$. Thus, from the Case (i) condition, we obtain \begin{equation}{\label{eqn:case6eps}}\sum_{\mathfrak{p}\in Y_{3+}}N_{\mathfrak{p}}\ge 6\epsilon\sum_{\mathfrak{p}\in Y}N_{\mathfrak{p}}\ge6\epsilon\sum_{\mathfrak{p}\in Y_2}N_\mathfrak{p}\end{equation} if $n\ge\max\{N_{\tau,1},N_{\tau,\epsilon_2},N_{\tau,2}\}$. By Lemma \ref{lem:heightsqueeze} along with Proposition \ref{prop:rad}, there exists an $N_{\tau,\epsilon}$ such that if $n\ge N_{\tau,\epsilon}$, we have \begin{equation}{\label{eqn:radbound}}(1-\epsilon)h(f^n(\alpha))\le 2\sum_{\mathfrak{p}\in Y} N_{\mathfrak{p}}.\end{equation} This gives \begin{equation*}(1-\epsilon)\sum_{\mathfrak{p}\in Y_1} N_{\mathfrak{p}}+(2-2\epsilon)\sum_{\mathfrak{p}\in Y_2}N_{\mathfrak{p}}+(3-3\epsilon)\sum_{\mathfrak{p}\in Y_{3+}}N_{\mathfrak{p}}\le 2\sum_{\mathfrak{p}\in Y} N_{\mathfrak{p}}\end{equation*} if $n\ge \max\{N_{\tau,1},N_{\tau,\epsilon_2},N_{\tau,2},N_{\tau,\epsilon}\}$. In this case, (\ref{eqn:case6eps}) reduces to \[(1-\epsilon)\sum_{\mathfrak{p}\in Y_1}N_{\mathfrak{p}}+(2+\epsilon)\sum_{\mathfrak{p}\in Y_2}N_{\mathfrak{p}}+(5/2-3\epsilon)\sum_{\mathfrak{p}\in Y_{3+}}N_{\mathfrak{p}}\le 2\sum_{\mathfrak{p}\in Y}N_{\mathfrak{p}}.\] Subtracting $2\sum_{\mathfrak{p}\in Y} N_\mathfrak{p}$ from each side, we get \[(-1-\epsilon)\sum_{\mathfrak{p}\in Y_1} N_\mathfrak{p} +\epsilon\sum_{\mathfrak{p}\in Y_2} N_\mathfrak{p}+(1/2-3\epsilon)\sum_{\mathfrak{p}\in Y_{3+}} N_\mathfrak{p}\le 0.\] Simplifying and applying (\ref{eqn:radbound}), one obtains \begin{equation}{\label{eqn:Vojta3}}\sum_{\mathfrak{p}\in Y_1}N_{\mathfrak{p}}\ge \fr{1/2-3\epsilon}{1+\epsilon}\sum_{\mathfrak{p}\in Y_{3+}}N_{\mathfrak{p}}\ge \fr{1/2-3\epsilon}{1+\epsilon}\cdot6\epsilon \sum_{\mathfrak{p}\in Y}N_{\mathfrak{p}}>3\epsilon(1-\epsilon)\left(\fr{1/2-3\epsilon}{1+\epsilon}\right) h(f^n(\alpha)).\end{equation} By Lemma \ref{lem:heightsqueeze}, we can assume that for all $n\ge N_{\tau,\epsilon}$, and all $\alpha\in K$ with infinite forward orbit under $f$, we have $h(f^n(\alpha))>h(f^{n-3}(\alpha))$. Therefore (\ref{eqn:2/5}) and (\ref{eqn:Vojta3}) imply \begin{equation}{\label{eqn:Vojta2}}\sum_{\mathfrak{p}\in Y_1}N_{\mathfrak{p}}>\fr{6}{5}\epsilon(1-\epsilon)h(f^{n-3}(\alpha))\end{equation} if $n\ge\max\{N_{\tau,1},N_{\tau,\epsilon_2},N_{\tau,2},N_{\tau,\epsilon}\}$ satisfies (i). 
		
		On the other hand, if some $n\ge\max\{N_{\tau,1},N_{\tau,\epsilon_2},N_{\tau,2},N_{\tau,\epsilon}\}$ satisfies (ii), then (\ref{eqn:Vojta2}) holds trivially. Therefore \[\sum_{\mathfrak{p}\in Y_1}N_{\mathfrak{p}}\ge\fr{6}{5}\epsilon(1-\epsilon)h(f^{n-3}(\alpha))>\fr{2}{15}\epsilon(1-\epsilon)h(f^n(\alpha))\] for any $n\ge\max\{N_{\tau,\epsilon},N_{\tau,\epsilon_2},N_{\tau,1},N_{\tau,2}\}$. For all sufficiently small choices of positive $\epsilon\le\delta_1/216$, substituting $8\epsilon$ for $\epsilon$ completes the proof.

	\end{proof}
	
	\section{Uniform bounds on Zsigmondy sets}
	
	Throughout this section, let $K$ be a fixed number field, and let $P_d$ be as in \S\ref{section:abc}, \S\ref{section:quadratic}, and the introduction. For convenience, we restate the results of Theorem \ref{thm:main} as Theorems \ref{thm:lineartau} and \ref{thm:quadraticVojta}.
	
	\begin{thm}{\label{thm:lineartau}} Assume the $abc$-Conjecture for $K$. Let $d\ge 3$, and let $f(x)\in P_d$. There exist positive constants $D_1, D_2$ depending only on $d$ and $K$ such that if \[n>D_1\log^+\nu(f)+D_2,\] then $f^n(\alpha)$ has a multiplicity 1 primitive prime divisor for all $\alpha\in K$ having infinite forward orbit under $f$.
		
	\end{thm}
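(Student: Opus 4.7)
The plan is to combine Propositions \ref{prop:rad} and \ref{prop:imprimitive} with a multiplicity-counting argument, exploiting the inequality $2(d-1)>d$ valid for $d\ge 3$.

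Set $\tau=\nu(f)$ and let $\alpha\in K$ have infinite forward orbit under $f$. First I apply Proposition \ref{prop:rad} with a small $\epsilon_1>0$ to obtain a threshold $N_1=N_1(d,\tau,K,\epsilon_1)$ so that for all $n\ge N_1$,
\[\sum_{v_\mathfrak{p}(f^n(\alpha))>0} N_\mathfrak{p} \;>\; (d-1-\epsilon_1)\,h(f^{n-1}(\alpha)).\]
Proposition \ref{prop:imprimitive} with small $\delta>0$ then yields a threshold $N_2=N_{\tau,\delta}$ beyond which the contribution of imprimitive primes to the above sum is at most $\delta\,h(f^n(\alpha))$. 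Denote by $Y_1^{\mathrm{prim}}$ and $Y_{2+}^{\mathrm{prim}}$ the sets of primitive prime divisors of $f^n(\alpha)$ of multiplicity $1$ and $\ge 2$, respectively, and write $A=\sum_{\mathfrak{p}\in Y_1^{\mathrm{prim}}}N_\mathfrak{p}$, $B=\sum_{\mathfrak{p}\in Y_{2+}^{\mathrm{prim}}}N_\mathfrak{p}$. Combining the two propositions gives
\[A+B \;>\; (d-1-\epsilon_1)\,h(f^{n-1}(\alpha))-\delta\,h(f^n(\alpha)),\]
while inequality (\ref{eqn:divisortoheight}), applied while counting multiplicities, yields the complementary bound $A+2B\le h(f^n(\alpha))$. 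Doubling the first inequality and subtracting the second produces
\[A \;>\; 2(d-1-\epsilon_1)\,h(f^{n-1}(\alpha))-(1+2\delta)\,h(f^n(\alpha)).\]

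To conclude $A>0$, I will bound the ratio $h(f^n(\alpha))/h(f^{n-1}(\alpha))$ from above. Combining Lemma \ref{lem:upperbound'} with the lower bounds of Lemmas \ref{lem:lb'} and \ref{lem:mincan2'}, together with the defining inequality $h(\gamma)\le\tau\max\{1,h(c-\gamma)\}$, this ratio tends to $d$ as $n\to\infty$, with an additive error that becomes negligible once $d^{n-1}$ dominates a constant multiple of $\tau$. Since $d\ge 3$ makes $2(d-1)>d$ strictly, I fix $\epsilon_1$ and $\delta$ in advance, small enough that $2(d-1-\epsilon_1)>(1+2\delta)(d+\eta)$ for some $\eta>0$; then for $n$ above all of $N_1$, $N_2$, and a third threshold $N_3$ (also linear in $\log^+\tau$) controlling the ratio, the displayed lower bound on $A$ is strictly positive. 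This forces $f^n(\alpha)$ to possess a primitive prime divisor of multiplicity $1$.

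The main obstacle I anticipate is verifying that each of $N_1,N_2,N_3$ grows only linearly in $\log^+\nu(f)$, so that the final constants $D_1,D_2$ depend solely on $d$ and $K$. Inspecting the proofs of Propositions \ref{prop:rad} and \ref{prop:imprimitive}, every invocation of (\ref{eqn:c}), (\ref{eqn:gamma}), or (\ref{eqn:alpha}) requires $d^n\max\{1,h(c-\gamma)\}$ to dominate a fixed multiple of $\tau\max\{1,h(c-\gamma)\}$, which occurs as soon as $n>\log_d\tau+O_{d,K}(1)$; the same bookkeeping governs $N_3$. Consolidating these estimates into explicit constants $D_1,D_2$ is the one step that demands care, but no new ideas beyond the lemmas of Section \ref{section:abc} are required.
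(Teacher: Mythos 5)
Your overall strategy coincides with the paper's: combine Proposition \ref{prop:rad} and Proposition \ref{prop:imprimitive}, count multiplicities via (\ref{eqn:divisortoheight}) (your inequality $A+2B\le h(f^n(\alpha))$ and the resulting $A>2(d-1-\epsilon_1)h(f^{n-1}(\alpha))-(1+2\delta)h(f^n(\alpha))$ are correct), exploit $2(d-1)>d$ for $d\ge 3$, and track that all thresholds grow linearly in $\log^+\nu(f)$. The gap is in the step where you bound the ratio $h(f^n(\alpha))/h(f^{n-1}(\alpha))$. You assert that this ratio tends to $d$ by ``combining Lemma \ref{lem:upperbound'} with the lower bounds of Lemmas \ref{lem:lb'} and \ref{lem:mincan2'}.'' Those lemmas cannot deliver that: the upper bound at level $n$ is roughly $d^n\bigl(h(\alpha-\gamma)+\tfrac{1}{d-1}(h(c-\gamma)+\log 2)\bigr)$, while the lower bound at level $n-1$ is roughly $d^{n-1}\bigl(h(\alpha-\gamma)-\tfrac{2}{d-1}\max\{1,h(c-\gamma)\}\bigr)$, or $\kappa d^{n-1}\max\{1,h(c-\gamma)\}$ via Lemma \ref{lem:mincan2'}. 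The quotient is therefore $d$ times a factor that does not decay as $n\to\infty$: near the boundary $h(\alpha-\gamma)\approx\tfrac{4}{d-1}\max\{1,h(c-\gamma)\}$ it is about $3$, and in Case (ii) it is of order $1/\kappa$, where $\kappa$ is Ingram's constant and may be very small. Since your positivity condition requires the ratio to be below $2(d-1-\epsilon_1)/(1+2\delta)<2d$, a bound like $3d$ (let alone $d/\kappa$) destroys the argument, so as written the proof does not close.

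The statement you need is true, and the correct justification is the one the paper uses: the one-step estimate $h(f(x))=dh(x)+O_d(h(f)+1)$ (Lemma \ref{lem:triangle} applied in both directions, or \cite[Theorem 3.11]{Silverman}) with $x=f^{n-1}(\alpha)$, combined with Lemma \ref{lem:heightgrowth} (whose threshold grows linearly in $\log^+\tau$) to make the $O_d(h(f)+1)$ error at most $\epsilon h(f^{n-1}(\alpha))$. This yields $(d-\epsilon)h(f^{n-1}(\alpha))\le h(f^n(\alpha))\le(d+\epsilon)h(f^{n-1}(\alpha))$ for all $n$ beyond a threshold linear in $\log^+\tau$; this is exactly Lemma \ref{lem:heightsqueeze}, which the paper states for $d=2$ but whose proof works verbatim for every $d\ge 2$ (and is indeed invoked in the paper's own proof of Theorem \ref{thm:lineartau}). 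With your ratio step replaced by this argument, the remainder of your multiplicity count reproduces the paper's proof, which performs the same counting implicitly.
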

	
	\begin{thm}{\label{thm:quadraticVojta}} Let $f(x)\in P_2$, and suppose $K=\QQ$ or $K$ is an imaginary quadratic field. Assume the $abc$-Conjecture for $K$ and the Height Uniformity Conjecture. There exist positive constants $D_3, D_4$ depending only on $K$ such that if \[n>D_3\log^+\nu(f)+D_4,\] then $f^n(\alpha)$ has a multiplicity 1 primitive prime divisor for all $\alpha\in K$ having infinite forward orbit under $f$.
		
	\end{thm}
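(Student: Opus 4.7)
The plan is to deduce Theorem \ref{thm:quadraticVojta} by combining Propositions \ref{prop:imprimitive} and \ref{prop:heightunif}, which together control how the multiplicity-1 primitive divisors of $f^n(\alpha)$ contribute to its height. Specifically, I would first fix a single sufficiently small $\epsilon_0 > 0$ so that Proposition \ref{prop:heightunif} applies; this $\epsilon_0$ depends only on $K$ (via the constant $\delta_1$ arising from Northcott's theorem in that proof), and not on $\tau = \nu(f)$. Proposition \ref{prop:heightunif} then supplies a threshold $N = N(\tau, \epsilon_0, K)$ such that for all $n \ge N$, all $f(x) \in P_2$ with $\nu(f) \le \tau$, and all $\alpha \in K$ with infinite forward orbit, the multiplicity-1 divisors of $f^n(\alpha)$ satisfy $\sum_{\mathfrak{p} \in Y_1} N_{\mathfrak{p}} > \epsilon_0 \, h(f^n(\alpha))$.

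Next, I would invoke Proposition \ref{prop:imprimitive} with $\delta = \epsilon_0/2$ to produce a threshold $N'$ such that for $n \ge N'$, the imprimitive divisors contribute at most $(\epsilon_0/2) h(f^n(\alpha))$. Subtracting, the multiplicity-1 \emph{primitive} prime divisors of $f^n(\alpha)$ account for at least $(\epsilon_0/2) h(f^n(\alpha))$. Since Lemma \ref{lem:mincan2'} and the hypothesis that $\alpha$ has infinite forward orbit force $h(f^n(\alpha)) > 0$ for $n$ large, this contribution is strictly positive and there must exist at least one multiplicity-1 primitive prime divisor of $f^n(\alpha)$.

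It remains to quantify the dependence of $N$ and $N'$ on $\tau$ to obtain $n > D_3 \log^+ \nu(f) + D_4$. Each threshold condition appearing in Propositions \ref{prop:rad}, \ref{prop:imprimitive}, and \ref{prop:heightunif} ultimately reduces, via Lemma \ref{lem:mincan2'} and the estimates (\ref{abcline10}) and (\ref{abcline9}), to requiring that $d^n \cdot \kappa \max\{1, h(c-\gamma)\}$ dominate a linear function of $\tau \max\{1, h(c-\gamma)\}$. As noted in the remark following Lemma \ref{lem:heightgrowth}, this forces $n \ge c \log^+ \tau + c'$ for constants $c, c'$ depending only on $d$ and $K$. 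Taking the maximum of the finitely many thresholds that feed into Propositions \ref{prop:imprimitive} and \ref{prop:heightunif} therefore yields a bound of the desired form.

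The main obstacle is not any new analytic input but rather the bookkeeping needed to verify that $\epsilon_0$ — and hence the threshold $N$ supplied by Proposition \ref{prop:heightunif} — can be chosen uniformly over the family $\{f \in P_2 : \nu(f) \le \tau\}$. This hinges on the fact that the Height Uniformity Conjecture's constants $C_1, C_2$ depend only on the degree $5$ (applied to $Y^2 = u_n d_n f^3(X)$) and on $K$, and that the constant $C$ of Lemma \ref{lem:decomposition} depends only on $K$; together these ensure $\delta_1$ in the proof of Proposition \ref{prop:heightunif} is independent of $\tau$, so that $\epsilon_0 \le \delta_1/216$ may be fixed once and for all before invoking Proposition \ref{prop:imprimitive}.
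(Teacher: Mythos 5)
Your proposal is correct and takes essentially the same route as the paper: it combines Proposition \ref{prop:heightunif} (with a small $\epsilon$ fixed uniformly, depending only on $K$ through $\delta_1$) with Proposition \ref{prop:imprimitive} at $\delta=\epsilon/2$ to force a strictly positive contribution from multiplicity-1 primitive primes, and then tracks that all the thresholds grow linearly in $\log^+\nu(f)$ via the remark following Lemma \ref{lem:heightgrowth}. (One trivial slip: the curve $Y^2=u_nd_nf^3(X)$ has degree $8$, not $5$; this affects nothing since the Height Uniformity constants depend only on $K$ and that fixed degree.)
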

	
	Fix $d\ge 2$. For $f(x)\in P_d$, we define $N_f$ to be the least $N$ such that for all $\alpha\in K$ having infinite forward orbit under $f$, there is a multiplicity 1 primitive prime divisor of $f^n(\alpha)$ for all $n\ge N$. (If such an $N_f$ does not exist, set $N_f=\infty$.) By Theorems \ref{thm:lineartau} and \ref{thm:quadraticVojta}, Vojta's Conjecture implies $N_f<\infty$. Our next result reveals that Theorems \ref{thm:lineartau} and \ref{thm:quadraticVojta} give the best possible upper bound on $N_f$, assuming $N_f<\infty$.
	
	\begin{thm}{\label{thm:taunec}} Fix $d\ge 2$.  There exist constants $D_5>0$ and $D_6$ depending only on $d$ such that \[N_f\ge D_5\log^+\nu(f)+D_6\] for all $f(x)\in P_d$.
		
	\end{thm}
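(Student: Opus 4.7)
The plan is constructive: for each $f \in P_d$ with $\nu(f)$ sufficiently large, I aim to exhibit $\alpha\in K$ with infinite forward orbit under $f$ and an index $n_0\gtrsim\log^+\nu(f)/\log d$ such that $f^{n_0}(\alpha)=0$.  Vanishing iterates trivially admit no primitive prime divisor, so $n_0$ falls into the Zsigmondy set of the orbit and one concludes $N_f\ge n_0+1$.

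The construction is the natural internal analogue of the introductory conjugation example.  Writing $\tilde f(x)=x^d+(c-\gamma)\in\mathcal O_K[x]$, the shift identity $f^n(\alpha)=\tilde f^n(\alpha-\gamma)+\gamma$ turns the target $f^{n_0}(\alpha)=0$ into the preimage condition $\tilde f^{n_0}(\alpha-\gamma)=-\gamma$.  Lemma~\ref{lem:lb} applied to $\tilde f$ says that any $\beta\in\overline\QQ$ satisfying $\tilde f^{n_0}(\beta)=-\gamma$ must also satisfy
\[h(\gamma)>d^{n_0}\!\left(h(\beta)-\frac{2}{d-1}\max\{1,h(c-\gamma)\}\right),\]
so under the natural bound $h(\beta)=O_d(\max\{1,h(c-\gamma)\})$ the maximal $n_0$ for which such a preimage can exist is exactly $\frac{1}{\log d}\log^+\nu(f)+O_d(1)$, which is the target magnitude.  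This calibration would yield constants $D_5>0$ and $D_6$ depending only on $d$.

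The principal obstacle is to realize $\beta$ in $K$ itself (rather than only in $\overline\QQ$) and to ensure $\alpha=\beta+\gamma$ has infinite forward orbit under $f$.  The plan for existence is to pick $\beta_0\in K$ of small height with infinite forward orbit under $\tilde f$ (available by Northcott's theorem), follow its iterates $\tilde f^j(\beta_0)\in K$, and select $n_0$ so that $\tilde f^{n_0}(\beta_0)=-\gamma$ on the nose.  Since the latter is a single algebraic equation along a $K$-rational orbit, one either produces a direct solution or replaces the sharp condition $f^{n_0}(\alpha)=0$ by a combinatorial surrogate: show that for a suitably chosen $\alpha\in K$ every multiplicity-one prime divisor of $f^{n_0}(\alpha)$ already divides some earlier iterate $f^j(\alpha)$ with $j<n_0$, by a pigeonhole bound on the available prime support controlled through Lemmas~\ref{lem:upperbound} and~\ref{lem:lb}.

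Once the existence step is in hand, the rest is formal: Northcott's theorem guarantees $\alpha$ can be chosen non-preperiodic, so the orbit is infinite, and combining the calibration with $N_f\ge n_0+1$ yields the desired bound.  The hard point will be proving the existence of $\beta$, as the direct $K$-rationality argument need not descend through the preimage tree of $-\gamma$; the pigeonhole surrogate, which only demands that earlier iterates collectively absorb the primes supporting $f^{n_0}(\alpha)$, is the likely practical route and is well-suited to the height-theoretic machinery developed in earlier sections of the paper.
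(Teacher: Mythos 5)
Your central step---given a fixed $f\in P_d$ with $\nu(f)$ large, produce $\alpha\in K$ with $f^{n_0}(\alpha)=0$ at depth $n_0\approx\frac{1}{\log d}\log^+\nu(f)$---is exactly the step you do not prove, and for a fixed $f$ it is not just hard but generally impossible. The condition $\tilde f^{n_0}(\beta)=-\gamma$ asks for a $K$-rational root of the degree $d^{n_0}$ polynomial $\tilde f^{n_0}(X)+\gamma$, and for a given $\gamma$ there is no reason such a root exists for any $n_0\ge 1$ (already $n_0=1$ requires $-\gamma-(c-\gamma)$ to be a $d$-th power in $K$). The forward-orbit variant is quantified backwards: once $f$, hence $\gamma$, is fixed, you cannot ``select $n_0$ so that $\tilde f^{n_0}(\beta_0)=-\gamma$ on the nose''---the orbit of $\beta_0$ will generically never pass through $-\gamma$. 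The pigeonhole surrogate is not an argument either: Lemmas \ref{lem:lb} and \ref{lem:upperbound} control the total weighted mass $\sum v_{\mathfrak{p}}(\cdot)N_{\mathfrak{p}}$ of the prime support of the iterates, but nothing forces the multiplicity-one primes of $f^{n_0}(\alpha)$ to lie among the primes dividing earlier iterates; the supports are not constrained to overlap, so no counting of heights can absorb them. There is also a direction problem in your calibration: Lemma \ref{lem:lb} only caps the depth at which a bounded-height preimage could exist at roughly $\frac{1}{\log d}\log^+\nu(f)$, whereas you need to \emph{achieve} a depth of that size; and once $\alpha$ is pinned down by $f^{n_0}(\alpha)=0$, Northcott gives you no freedom to make its orbit infinite (that depends on whether $0$ is preperiodic for $f$).

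The paper's proof avoids all of this by reversing the roles of $f$ and $\alpha$: it does not fix $f$ and hunt for $\alpha$, it \emph{constructs} the maps. Starting from a non-PCF $\tilde f(x)=x^d+(c-\gamma)$ with $c-\gamma\in\mathcal{O}_K$ and an arbitrary $N$, one sets $\gamma=-\tilde f^N(0)$ and $f=M\tilde fM^{-1}=(x-\gamma)^d+c$ with $M(x)=x+\gamma$, so that the critical point itself satisfies $f^N(\gamma)=0$ while its forward orbit is infinite; hence $N$ lies in the Zsigmondy set of the critical orbit and $N_f>N$, taking $\alpha=\gamma$ with no rational-point issue at all. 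The calibration is then the elementary bound of Lemma \ref{lem:upperbound},
\[
h(\gamma)=h(\tilde f^N(0))<\frac{d^N}{d-1}\bigl(h(c-\gamma)+\log 2\bigr),
\]
which gives $N\log d>\log^+\nu(f)-\log 2$ and hence the stated inequality with explicit constants depending only on $d$. In other words, the theorem is a sharpness statement (it shows the upper bounds of Theorems \ref{thm:lineartau} and \ref{thm:quadraticVojta} cannot be improved), realized on this explicit family---your ``introductory conjugation example,'' but with $f$ varying rather than $\alpha$. As written, your proposal attempts a stronger ``every $f$'' assertion whose key existence step is unproved (and for fixed $f$ typically false), so it does not establish the theorem; adopting the paper's construction is the fix.
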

	
	\begin{proof}[Proof of Theorem \ref{thm:lineartau}] As $d\ge 3$, we see from Proposition \ref{prop:rad} and Lemma \ref{lem:heightsqueeze} that for all sufficiently small $\epsilon>0$, $N_1=N_1(d,K,\tau,\epsilon)$ satisfies \[\sum_{\mathfrak{p}\in Y_1} N_\mathfrak{p}>\epsilon h(f^n(\alpha))\] for all $n\ge N_1$.  Let $Y_{\textup{prim}}$ denote the set of multiplicity 1 primitive prime divisors of $f^n(\alpha)$. Letting $\delta=\epsilon/2$ in Proposition \ref{prop:imprimitive}, it follows that \begin{equation}{\label{eqn:yprim}}\sum_{\mathfrak{p}\in Y_{\textup{prim}}}N_\mathfrak{p}\ge \fr{\epsilon}{2}h(f^n(\alpha))\end{equation} for all $n\ge\max\{N_1,N_{\tau,\delta}\}$. From the proofs of Propositions \ref{prop:rad} and \ref{prop:imprimitive}, we see that we can choose $N_1$ and $N_{\tau,\delta}$ to grow at most linearly in $\log^+\tau$; in particular, there exists some $M>0$ depending only on $d$ and on $K$ such that if $n\ge3\log^+(\nu(f))+M$, then $f^n(\alpha)$ has a multiplicity 1 primitive prime divisor for any $\alpha\in K$ having infinite forward orbit under $f$. 
		
	\end{proof}
	
	\begin{proof}[Proof of Theorem \ref{thm:quadraticVojta}]
		As noted in the remark following the proof of Lemma \ref{lem:heightgrowth}, the $N_{\tau,1}$ resulting from the proof grows linearly in $\log^+\tau$. For a given choice of $\epsilon_2$, the same holds for the $N_{\tau,\epsilon_2}$ produced in the proof of Proposition \ref{prop:heightunif} using Lemma \ref{lem:heightgrowth}. Lemma \ref{lem:heightsqueeze} follows from Lemma \ref{lem:heightgrowth}, so similarly, for a given choice of $\epsilon$, the  $N_{\tau,2}$ and $N_{\tau,\epsilon}$ produced in the course of the proof of Proposition \ref{prop:heightunif} grow linearly in $\log^+\tau$. Therefore $\max\{N_{\tau,\epsilon},N_{\tau,\epsilon_2},N_{\tau,1},N_{\tau,2}\}$ grows linearly in $\log^+\tau$. Let $\delta=\epsilon/2$, where $\epsilon$ is as in the statement of Proposition \ref{prop:heightunif}, and let $N$ be as in the conclusion of Proposition \ref{prop:heightunif}. Then \[\sum_{\mathfrak{p}\in Y_{\textup{prim}}}N_\mathfrak{p}\ge\fr{\epsilon}{2}h(f^n(\alpha))\] for all $n\ge\max\{N,N_{\tau,\delta}\}$, where $N_{\tau,\delta}$ is as in Proposition \ref{prop:imprimitive}. As previously noted, $N_{\tau,\delta}$ grows linearly in $\log^+\tau$. This completes the proof.
	\end{proof}
	
	\begin{proof}[Proof of Theorem \ref{thm:taunec}] Let $f(x)\in P_d$. Suppose $\gamma,c$ are chosen so that $f$ is non-PCF, and $f^N(\gamma)=0$ for some $N$; we can do this by writing $\tilde{f}(x)=x^d+(c-\gamma)$ where $\tilde{f}$ is non-PCF with $c-\gamma\in\mathcal{O}_K$, setting $\gamma=-\tilde{f}^N(0)$, and considering $f=M\tilde{f}M^{-1}=(x-\gamma)^d+c$, where $M(x)=x+\gamma$. Since $f^N(\gamma)=0$, we see that $N_f>N$. On the other hand, by Lemma \ref{lem:upperbound}, $N$ satisfies \[h(\gamma)=h(\tilde{f}^N(0))< \dfrac{d^N}{d-1}(h(c-\gamma)+\log2),\] so \[\log(d-1)+\log^+\left(\dfrac{h(\gamma)}{h(c-\gamma)+\log2}\right)< N\log d.\] Since \[h(c-\gamma)+\log2<2\max\{1,h(c-\gamma)\},\] one deduces that \[\log^+(\nu(f))-\log2<N\log d,\] and thus, $N_f$ must satisfy \[\fr{1}{\log d}\left(\log^+\nu(f)-\log2\right)< N_f.\] \end{proof}
	
	\section{Applications to Galois uniformity}{\label{section:gal}}
	
	In this section, we apply Theorems \ref{thm:lineartau} and \ref{thm:quadraticVojta} to the question of Galois uniformity of unicritical polynomials. Let $d\ge 2$, and let $K$ be a number field containing a primitive $d$-th root of unity. We say $f(x)\in K[x]$ is \textit{stable over} $K$ if $f^n(x)$ is irreducible over $K$ for all $n\ge 1$. If $f(x)=(x-\gamma)^d+c\in K[x]$, and $K_n=K_n(f)$ denotes the splitting field of $f^n(x)$, then we say $K_n/K_{n-1}$ is \textit{maximal} if $K_n/K_{n-1}$ has degree $d^{d^{n-1}}$. We use a standard lemma relating the maximality of $K_n/K_{n-1}$ to the arithmetic properties of $f^n(\gamma)$.
	
	\begin{lem}[\cite{Hamblen}]{\label{lem:Stoll}} Assume $K$ contains a primitive $d$-th root of unity, and that $f(x)=(x-\gamma)^d+c\in K[x]$ is stable over $K$. Let $n\ge 2$. Then $K_n/K_{n-1}$ is maximal if and only if for all $p\mid d$, $f^n(\gamma)$ is not a $p$-th power in $K_{n-1}$.\end{lem}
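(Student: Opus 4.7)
I would use standard Kummer theory adapted to the unicritical form of $f$. Since $f(x)-\beta=(x-\gamma)^d-(\beta-c)$ and $\mu_d\subseteq K\subseteq K_{n-1}$, the roots of $f(x)=\beta$ over $K_{n-1}$ are $\gamma+\zeta\sqrt[d]{\beta-c}$ for $\zeta\in\mu_d$, so $K_n=K_{n-1}\bigl(\sqrt[d]{\beta-c}:\beta\in f^{-(n-1)}(0)\bigr)$. By Kummer theory, $[K_n:K_{n-1}]$ equals the order in $K_{n-1}^{\times}/(K_{n-1}^{\times})^d$ of the subgroup generated by the $d^{n-1}$ classes $[\beta-c]$, so maximality $[K_n:K_{n-1}]=d^{d^{n-1}}$ is equivalent to these classes being $(\mathbb{Z}/d)$-linearly independent. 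The identity that bridges this with the lemma statement is $\prod_{\beta}(\beta-c)=(-1)^{d^{n-1}}f^{n-1}(c)=(-1)^{d^{n-1}}f^n(\gamma)$, which follows from $f^{n-1}(x)=\prod_\beta(x-\beta)$ and $f(\gamma)=c$.

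For the forward direction, independence forces $[\pm f^n(\gamma)]=\sum_{\beta}[\beta-c]$ to have order exactly $d$ in $K_{n-1}^{\times}/(K_{n-1}^{\times})^d$. A standard Kummer lemma, which uses $\mu_d\subseteq K$, characterizes this as: $a\in K_{n-1}^{\times}$ has order $d$ in the quotient if and only if $a\notin(K_{n-1}^{\times})^p$ for every prime $p\mid d$. The sign is innocuous: $(-1)^{d^{n-1}}=1$ when $d$ is even, and when $d$ is odd the identity $-1=(-1)^d$ says $-1$ is already a $d$-th power. Hence $f^n(\gamma)\notin(K_{n-1}^{\times})^p$ for every prime $p\mid d$.

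For the converse I would argue contrapositively: if $[K_n:K_{n-1}]<d^{d^{n-1}}$, there is a nonzero $(e_\beta)$ in the kernel $W\subseteq(\mathbb{Z}/d)^{d^{n-1}}$ of the Kummer map $(e_\beta)\mapsto\prod_\beta(\beta-c)^{e_\beta}$, and $W$ is a $G$-submodule for $G=\Gal(K_{n-1}/K)$, which acts transitively on $f^{-(n-1)}(0)$ by stability. Exhibiting a nonzero constant vector $(a,\ldots,a)\in W$ is the key: then $(\pm f^n(\gamma))^a\in(K_{n-1}^{\times})^d$ forces the order of $[\pm f^n(\gamma)]$ to divide $\gcd(a,d)<d$, and the Kummer lemma of the previous paragraph then makes $f^n(\gamma)$ a $p$-th power for some prime $p\mid d$, contradicting the hypothesis. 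To extract this constant relation, I would reduce to $d=p^a$ a prime power via the CRT decomposition of $K_n/K_{n-1}$ into the coprime-degree $p^{a_p}$-Kummer subextensions indexed by $p\mid d$; in this prime-power case $G\subseteq[C_{p^a}]^{n-1}$ is a $p$-group acting transitively on the $d^{n-1}$ roots, the nonzero $p^a$-torsion $G$-module $W$ satisfies $W^G\neq 0$ by the standard $p$-group fixed-point theorem, and in a transitive permutation module the $G$-fixed subspace is exactly the diagonal.

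The main obstacle is this final step, extracting a diagonal relation from an arbitrary relation in $W$; the CRT reduction combined with the wreath-product structure of $G$ is what makes it go through cleanly, since a naive Galois-averaging gives only $(|G|/d^{n-1})\cdot(\sum_\beta e_\beta)\cdot(1,\ldots,1)\in W$, an element which can a priori vanish. Everything else—the CRT compatibility of the Kummer subextensions, the sign of $(-1)^{d^{n-1}}$, and the Kummer characterization of order-$d$ classes—is routine bookkeeping once $\mu_d\subseteq K$ is in hand.
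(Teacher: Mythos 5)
You should first note that the paper does not prove this lemma at all: it is quoted from Hamblen--Jones--Madhu \cite{Hamblen}, so there is no internal proof to compare against. Your setup is the natural Kummer-theoretic one and most of it is correct: the identification $K_n=K_{n-1}\bigl(\sqrt[d]{\beta-c}:\beta\in f^{-(n-1)}(0)\bigr)$, the reformulation of maximality as independence of the classes $[\beta-c]$ in $K_{n-1}^{\times}/(K_{n-1}^{\times})^d$, the identity $\prod_\beta(\beta-c)=(-1)^{d^{n-1}}f^n(\gamma)$, the sign discussion, and the characterization (valid because $\mu_d\subseteq K_{n-1}$) of order-$d$ classes together give the forward implication in full. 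Your converse is also complete when $d$ is a prime power: then $\Gal(K_{n-1}/K)\subseteq[C_d]^{n-1}$ really is a $p$-group, a nonzero kernel $W$ has nonzero $G$-fixed points, and transitivity (from stability) identifies those fixed points with the diagonal; this is the Stoll/Jones-style argument.

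The gap is your reduction to the prime-power case. CRT does split $K_n/K_{n-1}$ into its $p$-Kummer parts and correctly reduces non-maximality to a nonzero relation $(e_\beta)$ modulo $p^{a_p}$ for some $p\mid d$, but it does not change the base group: $G=\Gal(K_{n-1}/K)$ is still a subgroup of $[C_d]^{n-1}$ for the original $d$, and since it acts transitively on $d^{n-1}$ points its order is divisible by every prime dividing $d$. So the claim ``$G\subseteq[C_{p^a}]^{n-1}$ is a $p$-group'' is false whenever $d$ has two distinct prime factors; the $p$-group fixed-point theorem does not apply, and a Sylow $p$-subgroup cannot stand in for $G$ (its orbits have $p$-power size, so it is not transitive on $d^{n-1}$ points and its fixed vectors need not be constant). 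Worse, the statement you need cannot be rescued by module theory alone: for $G=C_6$ acting regularly on six points and $p=2$, the ideal of $\mathbb{F}_2[C_6]\cong\mathbb{F}_2[x]/(x^6-1)$ generated by $(x+1)^2(x^2+x+1)$ is a nonzero submodule of the transitive permutation module containing no nonzero constant vector. Hence for $d$ divisible by at least two primes your diagonal-extraction step genuinely fails as stated, and extra arithmetic input (as in \cite{Hamblen}) is needed; as written, your argument proves the lemma only for $d=p^a$, which is not enough for the paper's use of it for arbitrary $d\ge 2$.
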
 We will also invoke a lemma about the stability of unicritical polynomials. As we only need to apply it to quadratic maps, we state it for the quadratic case.  The statement in the general case is similar.
	
	\begin{lem}[\cite{Jones1}]{\label{lem:stability}} Suppose $f(x)=(x-\gamma)^2+c\in K[x]$ is irreducible over $K$. If, for all $n\ge 2$, $f^n(\gamma)$ is not a square in $K$, then $f(x)$ is stable over $K$.
		
	\end{lem}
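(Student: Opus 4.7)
The plan is to argue by induction on $n$, showing $f^n(x)$ is irreducible over $K$ for every $n \ge 1$. The base case $n=1$ is the hypothesis. For the inductive step, assume $f^{n-1}$ is irreducible with $n \ge 2$, and apply Capelli's theorem to the composition $f^n = f^{n-1}\circ f$: this reduces the irreducibility of $f^n$ over $K$ to that of $f(x) - \alpha$ over $K(\alpha)$, where $\alpha$ is any fixed root of $f^{n-1}(x)$. Since $f(x) - \alpha = (x-\gamma)^2 - (\alpha - c)$ is a quadratic polynomial in $x$, this in turn is equivalent to showing that $\alpha - c$ is not a square in $K(\alpha)$.

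The key computation will be the norm $N_{K(\alpha)/K}(\alpha - c)$. Writing $\alpha_1,\dots,\alpha_{2^{n-1}}$ for the Galois conjugates of $\alpha$ (which are the roots of the irreducible polynomial $f^{n-1}$), one obtains
\[N_{K(\alpha)/K}(\alpha - c) = \prod_i (\alpha_i - c) = (-1)^{2^{n-1}} f^{n-1}(c) = f^{n-1}(f(\gamma)) = f^n(\gamma),\]
using the factorization $f^{n-1}(x) = \prod_i(x-\alpha_i)$ together with $f(\gamma) = c$, and noting that $(-1)^{2^{n-1}} = 1$ since $n \ge 2$. Because norms of squares are squares, if $\alpha - c$ were equal to $\beta^2$ for some $\beta \in K(\alpha)$, then $f^n(\gamma) = N_{K(\alpha)/K}(\beta)^2$ would be a square in $K$, contradicting the hypothesis. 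Hence $\alpha - c$ is not a square in $K(\alpha)$, the factor $f(x) - \alpha$ is irreducible over $K(\alpha)$, and Capelli's theorem yields the irreducibility of $f^n$ over $K$.

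The main obstacle is really just the bookkeeping in the norm computation---tracking the sign $(-1)^{2^{n-1}}$ and verifying that the Galois conjugates of $\alpha$ are precisely the roots of $f^{n-1}$ (which rests on the irreducibility of $f^{n-1}$ together with separability in characteristic zero). Once this is in place, the argument reduces to a clean application of Capelli's theorem combined with the multiplicativity of the norm, and requires no further input beyond the squareness hypothesis on $f^n(\gamma)$.
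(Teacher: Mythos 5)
Your proof is correct: the paper gives no argument of its own for this lemma (it is quoted from Jones), and your induction via Capelli's lemma together with the norm computation $N_{K(\alpha)/K}(\alpha-c)=(-1)^{2^{n-1}}f^{n-1}(c)=f^n(\gamma)$ is exactly the standard argument from that source. The sign bookkeeping and the reduction to $\alpha-c$ not being a square in $K(\alpha)$ are handled correctly, so nothing further is needed.
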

	
	\begin{thm}{\label{thm:galunif}}
		Assume $K$ contains a primitive $d$-th root of unity. If $d\ge 3$, assume the $abc$-Conjecture; if $d=2$, assume further the Height Uniformity Conjecture, and that $K=\QQ$ or $K$ is an imaginary quadratic field. Let $\tau\ge 0$. There exists an $N=N(\tau,d,K)$ such that for all non-PCF maps $f(x)=(x-\gamma)^d+c\in K[x]$ stable over $K$ with $\nu(f)\le\tau$, the extension $K_n/K_{n-1}$ is maximal for all $n\ge N$.  
		
		In particular, for all non-PCF maps $f(x)=(x-\gamma)^d+c\in K[x]$ stable over $K$ with $\nu(f)\le\tau$, the index $[[C_d]^{\infty}:G_K(f)]$ is uniformly bounded.
	\end{thm}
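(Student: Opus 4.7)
The strategy is to reduce the Galois-theoretic claim to the Zsigmondy-type bounds already established and then promote the existence of a multiplicity-one primitive prime divisor to the maximality of $K_n/K_{n-1}$ via Lemma \ref{lem:Stoll}, for which it suffices to show that $f^n(\gamma)$ is not a $p$-th power in $K_{n-1}$ for each prime $p \mid d$.

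First I would conjugate $f$ by the $K$-rational translation $M(x)=x+\gamma$, replacing $f$ by $\tilde f(x)=x^d+(c-\gamma)$. This leaves every splitting field $K_n$ unchanged (since $\gamma\in K$), canonically identifies $\rho_f$ with $\rho_{\tilde f}$, and yields $\tilde f^n(0)=f^n(\gamma)-\gamma$, so the $p$-th-power question transfers directly. If $c-\gamma\in\mathcal{O}_K$ we are now in $P_d$; otherwise, letting $S$ be the finite set of places where $c-\gamma$ fails to be integral, I would verify that the proofs of Propositions \ref{prop:rad}, \ref{prop:imprimitive}, and \ref{prop:heightunif} extend with $\mathcal{O}_K$ replaced by $\mathcal{O}_{K,S}$, using that the hypothesis $\nu(f)\le\tau$ gives enough control on $h(c-\gamma)$ for the constants to remain uniformly bounded.

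Since $f$ is non-PCF, the point $0$ has infinite forward orbit under $\tilde f$, so Theorem \ref{thm:lineartau} (for $d\ge 3$) or Theorem \ref{thm:quadraticVojta} (for $d=2$) furnishes constants $D_1,D_2$ depending only on $d$ and $K$ such that for every $n>D_1\log^+\tau+D_2$, the value $\tilde f^n(0)$ admits a prime divisor $\mathfrak{p}_n$ of $\mathcal{O}_K$ with $v_{\mathfrak{p}_n}(\tilde f^n(0))=1$ and $v_{\mathfrak{p}_n}(\tilde f^m(0))\le 0$ for every $0<m<n$. I would then invoke a standard ramification computation: primitivity means $\mathfrak{p}_n$ divides none of the critical values $\tilde f^m(0)$ for $m<n$ that appear (up to factors from $d$) in the discriminant of $\tilde f^{n-1}(x)$, so $\mathfrak{p}_n$ is at worst tamely ramified in $K_{n-1}/K$ with ramification index coprime to $p$ for any $p\mid d$. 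Consequently, for any prime $\mathfrak{P}$ of $K_{n-1}$ over $\mathfrak{p}_n$ one has $v_{\mathfrak{P}}(\tilde f^n(0))=e(\mathfrak{P}/\mathfrak{p}_n)$, which is not divisible by $p$; hence $\tilde f^n(0)$ is not a $p$-th power in $K_{n-1}$. Setting $N:=\lceil D_1\log^+\tau+D_2\rceil+1$ gives the required uniform threshold, and the index bound on $[[C_d]^\infty:G_K(f)]$ follows because only the steps with $n<N$ can fail to be maximal, so the total defect is bounded by $\prod_{n<N}d^{d^{n-1}}$, a quantity depending only on $\tau$, $d$, and $K$.

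The main obstacle is the ramification step: while the picture is classical, one must pin down precisely which primes can ramify in $K_{n-1}/K$ (essentially those dividing $d\cdot\prod_{m<n}\tilde f^m(0)$) and exclude from $\mathfrak{p}_n$ the finitely many primes dividing $d$ or sitting in $S$. The price is to enlarge $D_2$ by a uniformly bounded amount, which does not affect the conclusion.
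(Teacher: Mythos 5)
Your overall strategy (Zsigmondy bound on the critical orbit, then the discriminant formula and Lemma \ref{lem:Stoll}) is the paper's, but two steps as written do not go through. First, the conjugation reduction is wrong: replacing $f$ by $\tilde f(x)=x^d+(c-\gamma)$ does \emph{not} leave the fields $K_n$ unchanged, since $K_n(f)$ is generated by $f^{-n}(0)$, which conjugation identifies with $\tilde f^{-n}(-\gamma)$, not $\tilde f^{-n}(0)$; already for $n=1$ one compares $K((-c)^{1/d})$ with $K((\gamma-c)^{1/d})$. Likewise the quantity in Lemma \ref{lem:Stoll} is $f^n(\gamma)=\tilde f^n(0)+\gamma$, so a multiplicity-one primitive prime divisor of $\tilde f^n(0)$ says nothing directly about whether $f^n(\gamma)$ is a $p$-th power in $K_{n-1}(f)$: the $p$-th-power question does not ``transfer directly.'' The correct move, as in the paper, is to run Propositions \ref{prop:rad}, \ref{prop:imprimitive}, \ref{prop:heightunif} on $f$ itself with $\alpha=\gamma$; the hypothesis $c-\gamma\in\mathcal{O}_K$ can then be dropped for this particular orbit because (i) for non-PCF maps Ingram's argument gives $\hat{h}_f(0)\ge\kappa\max\{1,h(c)\}$ with $\kappa=\kappa(d,K)$ and no integrality assumption (Lemma \ref{lem:mincan2''}), and (ii) the primes at which $f^n(\gamma)$ has negative valuation are among those where $\gamma$ or $c$ does. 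Your proposed ``$\mathcal{O}_{K,S}$'' patch addresses only (ii) and not (i), which is precisely where integrality is used in the paper (via Theorem 1 of \cite{Ingram}), and with $S$ depending on $f$ the uniformity of the constants is exactly what needs proof.

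Second, the exclusion of bad primes is a genuine gap, not a bounded enlargement of $D_2$. For the discriminant argument ($\textup{Disc}(f^i)=\pm d^{d^i}\textup{Disc}(f^{i-1})(f^i(\gamma))^{d-1}$) to give that $\mathfrak{p}_n$ is \emph{unramified} in $K_{n-1}$ (tameness would not make the ramification index prime to $p$, since $p$ need not be the residue characteristic), you need $v_{\mathfrak{p}_n}(d)=v_{\mathfrak{p}_n}(c)=v_{\mathfrak{p}_n}(\gamma)=0$ in addition to primitivity, which only yields $v_{\mathfrak{p}_n}(f^m(\gamma))\le 0$ for $m<n$. The statements of Theorems \ref{thm:lineartau} and \ref{thm:quadraticVojta} only provide the \emph{existence} of one multiplicity-one primitive prime at each level, whereas the bad set depends on $f$ and has total weight on the order of $h(d)+h(c)+h(\gamma)$, which is unbounded over the family even with $\nu(f)\le\tau$; no shift of the threshold by a constant depending only on $\tau,d,K$ prevents the single witnessed prime from being bad. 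The paper resolves this quantitatively: from the proofs one has $\sum_{\mathfrak{p}\in Y_{\textup{prim}}}N_\mathfrak{p}\ge\frac{\epsilon}{2}h(f^n(\gamma))$ as in (\ref{eqn:yprim}), and Lemma \ref{lem:heightgrowth} together with (\ref{eqn:c}), (\ref{eqn:gamma}), (\ref{eqn:alpha}) forces $h(d)+h(c)+h(\gamma)<\frac{\epsilon}{2}h(f^n(\gamma))$ once $n$ exceeds a threshold linear in $\log^+\tau$, so some primitive multiplicity-one prime avoids $d$, $c$, and $\gamma$. That quantitative comparison is the missing ingredient in your sketch; once it is in place, your concluding deduction of maximality via Lemma \ref{lem:Stoll} and the bound on the index defect is fine.
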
 One might wonder whether the growth of our bound $N$ in terms of $\tau$ is merely an artefact of the use of Theorems \ref{thm:lineartau} and \ref{thm:quadraticVojta}. In fact, the greatest $n$ such that $K_n/K_{n-1}$ fails to be maximal cannot be bounded uniformly across all stable, non-PCF, unicritical $f(x)\in K[x]$, without taking $\nu(f)$ into account. As an example of this, we produce an infinite family of quadratic, stable, non-PCF maps with the property that the maximal such $n$ of each map is unbounded across the family.
	
	\begin{prop}{\label{prop:example}}
		Let $f(x)=x^2+2$, and let $K=\QQ$. For any $i\ge 2$, let \[f_i(x)=f\left(x+2+\fr{f^i(0)-2}{2}\right)-\left(2+\fr{f^i(0)-2}{2}\right)\in K[x].\] Then $K_i(f_i)/K_{i-1}(f_i)$ is not maximal. 
	\end{prop}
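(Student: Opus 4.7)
The plan is to apply Lemma \ref{lem:Stoll} with $d = 2$, which reduces the task to showing that $f_i^i(\gamma_i)$ is a square in $K_{i-1}(f_i)$, where $\gamma_i$ denotes the finite critical point of $f_i$.

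Set $a_i = 2 + (f^i(0) - 2)/2 = (f^i(0) + 2)/2$, so that $f_i(x) = (x + a_i)^2 + (2 - a_i)$ and hence $\gamma_i = -a_i$. Since $f_i$ is the conjugate of $f$ by the translation $\phi(x) = x + a_i$, i.e.\ $f_i = \phi^{-1}\circ f \circ \phi$, induction yields $f_i^n(x) = f^n(x + a_i) - a_i$ for every $n \ge 1$. Evaluating at $x = \gamma_i = -a_i$ gives
\[ f_i^i(\gamma_i) \;=\; f^i(0) - a_i \;=\; \fr{f^i(0) - 2}{2}, \]
and the identity $f^i(0) = (f^{i-1}(0))^2 + 2$ simplifies this to $f_i^i(\gamma_i) = (f^{i-1}(0))^2/2$.

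It then remains to verify that $(f^{i-1}(0))^2/2$ is a square in $K_{i-1}(f_i)$. Since $f^{i-1}(0)$ is a nonzero rational integer for $i \ge 2$ (the orbit of $0$ under $x \mapsto x^2 + 2$ is strictly increasing from $n=1$ onward), this reduces to showing $\sqrt{2} \in K_{i-1}(f_i)$. I would extract this directly from $K_1(f_i)$: the roots of $f_i(x) = 0$ in $\overline{\QQ}$ are $-a_i \pm \sqrt{a_i - 2}$, and since $a_i - 2 = (f^{i-1}(0))^2/2$, one has $\sqrt{a_i - 2} = f^{i-1}(0)/\sqrt{2} \in K_1(f_i)$. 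Hence $\sqrt{2} \in K_1(f_i) \subseteq K_{i-1}(f_i)$, as required.

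No serious obstacle is expected. The shift $a_i$ has been engineered precisely so that $f_i^i(\gamma_i)$ equals a perfect rational square divided by $2$, and the crucial $\sqrt{2}$ already lies in $K_1(f_i)$; everything else is a direct unfolding of the conjugation $f_i = \phi^{-1}\circ f\circ \phi$ combined with Lemma \ref{lem:Stoll}.
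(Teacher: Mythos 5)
Your computation is correct as far as it goes, and it is essentially the paper's own key identity made explicit: with $a_i=\fr{f^i(0)+2}{2}$ one has $f_i^i(\gamma_i)=\fr{(f^{i-1}(0))^2}{2}=-f_i(\gamma_i)$, and $\sqrt{2}\in K_1(f_i)\subseteq K_{i-1}(f_i)$ because $\textup{Disc}(f_i)=-4f_i(\gamma_i)=2(f^{i-1}(0))^2$; so $f_i^i(\gamma_i)$ is indeed a square in $K_{i-1}$. The genuine gap is at the very first step, where you invoke Lemma \ref{lem:Stoll}: that lemma carries the hypothesis that $f_i$ is \emph{stable} over $K$ (every iterate $f_i^n$ irreducible over $\QQ$), and you never verify it. This is not a formality: establishing stability is where the paper's proof spends most of its effort, and without it the criterion you are quoting is simply not available.

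Concretely, the paper checks that $2+\fr{f^i(0)-2}{2}\equiv 4\Mod{8}$ while $f^n(0)\equiv 2$ or $6\Mod{8}$ for all $n$, so via the conjugation $f_i=MfM^{-1}$ one gets $v_2(f_i^n(\gamma_i))=1$ for every $n\ge 1$ (your own formula exhibits the case $n=i$, since $v_2\bigl((f^{i-1}(0))^2/2\bigr)=1$); hence no $f_i^n(\gamma_i)$ is a square in $\QQ$. Together with the irreducibility of $f_i$ itself (e.g.\ $v_2(\textup{Disc}(f_i))=3$ is odd), Lemma \ref{lem:stability} then gives stability of $f_i$ over $\QQ$, and only at that point does Lemma \ref{lem:Stoll} convert ``$f_i^i(\gamma_i)$ is a square in $K_{i-1}$'' into ``$K_i/K_{i-1}$ is not maximal.'' To complete your proof you must add this stability argument (or else prove directly, without the stability hypothesis, the implication of the maximality criterion you are using, which you have not done).
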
 
	
	\begin{proof}
		First we show that each $f_i(x)$ is stable over $\QQ$. Fix an $f_i(x)$, and let $\gamma$ be its unique finite critical point. By Lemma \ref{lem:stability}, it suffices to show that $f_i^n(\gamma)$ is not a square in $\QQ$, and that $f_i(x)$ is irreducible over $\QQ$. We accomplish the former by showing that $v_2(f_i^n(\gamma))=1$ for all $n$. Indeed, since $v_2(f^n(0))=1$ for all $n$, we have \[f^i(0)-2= (f^{i-1}(0))^2\equiv 4\Mod{8}\] for all $i\ge 2$.  Thus \[\fr{f^i(0)-2}{2}\equiv2\Mod{8},\] so \[2+\fr{f^i(0)-2}{2}\equiv4\Mod{8}\] for all $i\ge2$. Since $f_i=MfM^{-1}$ where $M(x)=x-2-\fr{f^i(0)-2}{2}$, and $f^n(0)\equiv2$ or $6\Mod{8}$ for all $n$, this implies that $f_i^n(\gamma)\equiv2$ or $6\Mod{8}$ for all $n$. Thus $v_2(f_i^n(\gamma))=1$. As $2$ is not a square in $\QQ$, we conclude that for all $n\ge1$, $f_i^n(\gamma)$ is not a square in $\QQ$. Turning to the question of whether $f_i(x)$ is irreducible over $\QQ$, we see that $v_2(\textup{Disc}(f_i))=v_2(-4f_i(\gamma))=3$. As $f_i$ is quadratic, this implies that $f_i(x)$ is irreducible over $\QQ$. Therefore, by Lemma \ref{lem:stability}, $f_i(x)$ is stable over $\QQ$. 
		
		Next, we remark that by construction, $f_i(\gamma)=-f_i^i(\gamma)$. Moreover, $-f_i(\gamma)$ is a square in $K_1$, as $\textup{Disc}(f_i)=-4f_i(\gamma)$. But by Lemma \ref{lem:Stoll}, this implies that $K_i/K_{i-1}$ is not maximal, as $f_i^i(\gamma)$ is then a square in $K_{i-1}$.

	\end{proof}
	
	\begin{rmk}
		Proposition \ref{prop:example} serves as an apt illustration of Theorem \ref{thm:taunec}. Since the $f_i$ are all affine conjugate to one another, we see directly that the upper bound on the largest element of the Zsigmondy set (associated to the critical orbit) depends on how $h(f_i)$ compares to the height of $f_i$ in the moduli space of quadratic rational maps. Proposition \ref{prop:example} shows that an answer to Question \ref{question:uniformity} requires more than just a bound on the greatest $n$ such that $K_n/K_{n-1}$ is not maximal. 
	\end{rmk}
	
	Finally, we derive Theorem \ref{thm:galunif} from Theorems \ref{thm:lineartau} and \ref{thm:quadraticVojta}.
	
	\begin{proof}[Proof of Theorem \ref{thm:galunif}] We first formulate the following analogue of Lemma \ref{lem:lb'} for the special case $\alpha=\gamma$. The only difference lies in the assumptions on the coefficients of $f$; here, we do not require $c-\gamma\in\mathcal{O}_K$.
		
		\begin{lem}{\label{lem:mincan2''}} Let $K$ be a number field, and let $d\ge 2$. There exists a $\kappa=\kappa(d,K)>0$ depending only on $d$ and on $K$ such that \[h(f^n(\gamma))\ge\kappa d^n\max\{1,h(c-\gamma)\}-\fr{1}{d-1}(h(c-\gamma)+\log2)-h(\gamma)-\log2\] for all $f(x)=(x-\gamma)^d+c\in K[x]$ such that $\gamma$ has infinite forward orbit under $f$. \end{lem}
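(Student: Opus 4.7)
The plan is to conjugate $f$ by the translation $M(x)=x+\gamma$ so as to reduce to the critical point of a map of the form $\tilde f(x)=x^d+(c-\gamma)$, then apply a lower bound on the canonical height of $0$ under $\tilde f$, and finally transfer the bound back to $f^n(\gamma)$ via the triangle inequality. In structure, this parallels the proof of Lemma \ref{lem:mincan2'}; the whole point of stating a separate lemma is that we no longer assume $c-\gamma\in\mathcal O_K$, so we must avoid invoking Lemma \ref{lem:mincan2} and instead use Ingram's bound at the critical point directly.

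First, I would set $\tilde f(x)=x^d+(c-\gamma)\in K[x]$ and verify by an easy induction on $n$ that
\[f^n(\gamma)=\tilde f^n(0)+\gamma\qquad\text{for all }n\geq 0.\]
In particular, $\gamma$ has infinite forward orbit under $f$ iff $0$ has infinite forward orbit under $\tilde f$, i.e.\ iff $\tilde f$ is non-PCF.

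Next, I would apply Theorem 1 of \cite{Ingram} at the critical point of $\tilde f$: there exists $\kappa=\kappa(d,K)>0$ such that
\[\hat h_{\tilde f}(0)\;\geq\;\kappa\max\{1,h(c-\gamma)\}\]
for every non-PCF $\tilde f$ of the form $x^d+b$ with $b\in K$. The key point—and the only real novelty relative to Lemma \ref{lem:mincan2'}—is that Ingram's bound for the critical point is uniform over $b\in K$ and does not require $b\in\mathcal O_K$; the $\mathcal O_K$ hypothesis in Lemma \ref{lem:mincan2} arose from needing the bound at an \emph{arbitrary} point of minimum nonzero canonical height, which is not what is used here. Combining this with the functional equation $\hat h_{\tilde f}(\tilde f^n(0))=d^n\hat h_{\tilde f}(0)$ and Lemma \ref{lem:ineq1} applied to $\tilde f$ yields
\[h(\tilde f^n(0))\;\geq\;d^n\hat h_{\tilde f}(0)-\tfrac{1}{d-1}\bigl(h(c-\gamma)+\log 2\bigr)\;\geq\;\kappa d^n\max\{1,h(c-\gamma)\}-\tfrac{1}{d-1}\bigl(h(c-\gamma)+\log 2\bigr).\]

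Finally, since $\tilde f^n(0)=f^n(\gamma)-\gamma$, Lemma \ref{lem:triangle} gives $h(\tilde f^n(0))\leq h(f^n(\gamma))+h(\gamma)+\log 2$, and subtracting $h(\gamma)+\log 2$ from the displayed inequality produces exactly the bound claimed in the statement. The main (and essentially only) obstacle is the appeal to Ingram's theorem in the precise form needed, namely a lower bound on $\hat h_{\tilde f}(0)$ that is uniform over all $b=c-\gamma\in K$ (not merely $b\in\mathcal O_K$); once that ingredient is cited, the remaining steps are a routine conjugation plus two applications of Lemma \ref{lem:triangle}, identical in flavour to those in the proof of Lemma \ref{lem:mincan2'}.
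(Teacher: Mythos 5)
Your proposal is correct and follows essentially the same route as the paper: conjugate by $x\mapsto x+\gamma$ to reduce to $\tilde f(x)=x^d+(c-\gamma)$, invoke the lower bound $\hat h_{\tilde f}(0)\ge\kappa\max\{1,h(c-\gamma)\}$ coming from (the proof of) Theorem 1 of \cite{Ingram} for non-PCF maps with parameter in $K$, and transfer back via Lemma \ref{lem:ineq1} and Lemma \ref{lem:triangle}. Your identification of the key point---that the bound at the critical point does not need $c-\gamma\in\mathcal O_K$, unlike Lemma \ref{lem:mincan2}---is exactly the ``added observation'' the paper makes.
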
 The proof of Lemma \ref{lem:mincan2''} proceeds similarly to that of Lemma \ref{lem:mincan2'}, with the added observation that from the proof of Theorem 1 of \cite{Ingram}, it follows that if $f(x)=x^d+c\in K[x]$ is non-PCF, then there exists a $\kappa>0$ depending only on $d$ and on $K$ so that  \[\hat{h}_f(0)\ge\kappa\max\{1,h(c)\}.\] Next, we note that as $\alpha=\gamma$, the primes $\mathfrak{p}$ of $\mathcal{O}_K$ with $v_\mathfrak{p}(f^n(\gamma))<0$ must satisfy either $v_\mathfrak{p}(\alpha)=v_\mathfrak{p}(\gamma)<0$ or $v_\mathfrak{p}(c)<0$. But this, along with Lemma \ref{lem:mincan2'}, was the only consequence of having $c-\gamma\in\mathcal{O}_K$ that was used in the proofs of Proposition \ref{prop:rad} and \ref{prop:heightunif} (and thus indirectly, in Theorems \ref{thm:lineartau} and \ref{thm:quadraticVojta}). For the appropriate number fields $K$, therefore, Theorems \ref{thm:lineartau} and \ref{thm:quadraticVojta} hold over all maps of the form $f(x)=(x-\gamma)^d+c\in K[x]$ in the special case $\alpha=\gamma$.
		
		Suppose $d\ge 3$. Let $\epsilon>0$, and let $N_2$ be such that $h(d)<\fr{\epsilon}{4} h(f^n(\gamma))$ and $h(c)+h(\gamma)<\fr{\epsilon}{4}h(f^n(\gamma))$ for all $n\ge N_2$ and all $f(x)=(x-\gamma)^d+c\in K[x]$ with $\nu(f)\le\tau$.  (Such an $N_2$ exists by Lemma \ref{lem:heightgrowth}, and by inequalities (\ref{eqn:c}) and (\ref{eqn:alpha}).) Assume $\epsilon$ is sufficiently small, as was required in the proof of Theorem \ref{thm:lineartau}, let $\delta=\epsilon/2$, and let $N_1$ and $N_{\tau,\delta}$ be as in the proof of Theorem \ref{thm:lineartau}. Then (\ref{eqn:yprim}) applied to $\alpha=\gamma$ reads \[\sum_{\mathfrak{p}\in Y_{\textup{prim}}} N_\mathfrak{p}>h(d)+h(c)+h(\gamma)\] for all $n\ge N=\max\{N_1,N_2,N_{\tau,\delta}\}$. It follows that for all such $n$, $f^n(\gamma)$ has a multiplicity 1 primitive prime divisor $\mathfrak{p}_n$ such that $v_{\mathfrak{p}_n}(d)=v_{\mathfrak{p}_n}(c)=v_{\mathfrak{p}_n}(\gamma)=0$.  From Lemma 2.6 of \cite{Jones1} we have \[\textup{Disc}(f^i)=\pm d^{d^i}(\textup{Disc}(f^{i-1}))(f^i(\gamma))^{d-1}.\] The primes $\mathfrak{q}$ in $\mathcal{O}_K$ for which $f^i$ has some coefficient with negative $\mathfrak{q}$-adic valuation must satisfy either $v_\mathfrak{q}(\gamma)<0$ or $v_\mathfrak{q}(c)<0$. On the other hand, if no coefficient of $f^i$ has negative $\mathfrak{q}$-adic valuation, then $\mathfrak{q}$ ramifies in $K_i$ only when $\mathfrak{q}$ divides $\textup{Disc}(f^i)$. Hence the conditions on $\mathfrak{p}_n$ imply that for all $i\le n-1$, $\mathfrak{p}_n$ does not ramify in $K_i$. From this, we conclude that for all $n\ge N$, $f^n(\gamma)$ is not a $p$-th power in $K_{n-1}$ for any $p\mid d$. By Lemma \ref{lem:Stoll}, the extension $K_n/K_{n-1}$ is maximal for all $n\ge N$. 
		
		The proof in the case $d=2$ follows similarly.
		
	\end{proof}

\end{document}